\documentclass{article}
\usepackage[utf8]{inputenc}
\usepackage{fullpage}
\usepackage{hyperref}
\usepackage{xcolor}
\usepackage{amsmath,amsfonts,amssymb}
\usepackage{graphicx}%
\usepackage{subcaption}
\usepackage{mathrsfs}
\usepackage[margin=1.0in]{geometry}
\usepackage{appendix}
\usepackage{accents}

\usepackage{accents}

\usepackage{amsthm}
\usepackage[shortlabels]{enumitem}
\newtheorem{theorem}{Theorem}[section]

\newtheorem{proposition}[theorem]{Proposition}

\newtheorem{lemma}[theorem]{Lemma}
\newtheorem{corollary}[theorem]{Corollary}
\newcommand{\an}[1]{{\color{orange}{#1}}}

\newcommand{\sfR}{{\mathsf R}}

\newcommand{\sk}{{\rm k}}

\newcommand{\eps}{\varepsilon}
\newcommand{\E}{\mathbb E}
\newcommand{\Rm}{\mathbb R}

\newcommand{\sL}{{\mathsf L}}

\title{Complex Gaussianity and spatio-frequential memory effect of random wave processes}
\author{Guillaume Bal \thanks{Departments of Statistics and Mathematics and Committee on Computational and Applied Mathematics, University of Chicago, Chicago, IL 60637; guillaumebal@uchicago.edu} \and Anjali Nair \thanks{Department of Statistics and Committee on Computational and Applied Mathematics, University of Chicago, Chicago, IL 60637; anjalinair@uchicago.edu}}
\date{\today}

\begin{document}

\maketitle

\begin{abstract}
    Wavefield speckle patterns are generated by interference of randomly scattered coherent light. In the weak-coupling regime of the It\^o-Schr\"odinger paraxial model for long-distance wave propagation, we show the following multiscale character: a macroscopic envelope solves a deterministic diffusion equation while the local wavefield (the speckle) is described by a complex Gaussian process both in terms of spatial axial and lateral displacements as well as frequency and angular variations of the incident wavebeam. These results describe speckle patterns and corroborate chromato-spatial memory effects observed in laser light propagation through heterogeneous media.
\end{abstract}

\noindent{\bf Keywords:} wavebeam propagation, random media, speckle formation, Gaussian conjecture, It\^o-Schr\"odinger equation, speckle memory effect.

\section{Introduction} \label{sec:intro}

Our starting point is the following It\^o-Schr\"{o}dinger scalar model of wavebeam propagation in random media:
\begin{equation}\label{eqn:IS_original}
    \mathrm{d}u=\frac{i}{2\omega}\Delta_xu\mathrm{d}z-\frac{\omega^2R(0)}{8}u\mathrm{d}z+\frac{i\omega}{2}u\mathrm{d}B,\quad u(z=0,x)=u_0(x),\quad (z,x)\in[0,\infty)\times\mathbb{R}^d.
\end{equation}
Here, $u_0(x)$ is an incident wavebeam on a hyperplane $z=0$ generating the solution $u(z,x)$ for axial coordinate $z>0$ and transverse coordinates $x\in\Rm^d$ for $d\geq1$; $\omega\in\Rm$ is the wave frequency; $\Delta_x$ is the standard Laplacian in transverse variables; and $B$ is a real valued mean zero Gaussian random field with correlation 
\begin{equation}\nonumber
    \mathbb{E}B(z_1,x_1)B(z_2,x_2)=\min(z_1,z_2)R(x_1-x_2),
\end{equation}
for $R(x)$ a smooth, real-valued function. The product $udB$ should be interpreted in the It\^o sense. We refer to \cite{dawson1984random} for an analysis of such a random wave process model and to \cite{garnier2009coupled,bailly1996parabolic} for a derivation of the model from a scalar Helmholtz equation.

Our objective is to analyze the long-distance behavior of $u(z,x)$ in terms of the spatial variables $(z,x)$ as well as the frequency $\omega$ and orientation of the incident beam. Orientation variations are modeled by replacing $u_0(x)$ by $e^{i\sk\cdot x} u_0(x)$. More precisely, writing the wavefield as $u(z+h,r+x;\omega_0+\Omega,k_0+\kappa)$, we wish to show that the behavior in $(z,r)$ is characterized by a $(\omega_0,k_0)$-dependent solution of a diffusion equation while $u$ is (asymptotically) a mean zero complex Gaussian field in $(h,x,\Omega,\kappa)$ modeling speckle and memory effect as these parameters vary \cite{osnabrugge2017generalized,zhu2020chromato,garnier2023speckle}. Such results are obtained as a long-distance limit in the so-called diffusion scaling of the weak-coupling regime. They generalize those in \cite{bal2024complex} showing the Gaussianity of the process in $x$ for vanishing $(h,\Omega,\kappa)$. 

Complex Gaussian distributions are entirely characterized by their first and second statistical moments, with the real and imaginary parts of the field obeying i.i.d. Gaussian distributions~\cite{reed1962moment}. Such distributions heuristically describe high frequency wave propagation through random media over long distances~\cite{valley1976application, yakushkin1978moments, ishimaru1978wave, sheng1990scattering}. This also forms a model for speckle patterns, observed in physical experiments~\cite{goodman1976some, carminati2021principles}. While fairly well-accepted in the physical literature, due to the highly complex nature of wave propagation through random media~\cite{fouque2007wave, garnier2018multiscale}, such models do not have any rigorous justification in general. However, the paraxial approximation of the Helmholtz equation provides a more amenable framework for mathematical analysis. This ignores backscattering, and is routinely used to model laser propagation through optical turbulence~\cite{andrews2001laser}. Justification of such models starting from the scalar Helmholtz equation and the passage to the It\^o-Schr\"{o}dinger equation~\eqref{eqn:IS_original} can be found in~\cite{bailly1996parabolic, garnier2009coupled}. The It\^o-Schr\"{o}dinger equation will be our starting point for this paper as well.

As in \cite{bal2024complex}, the appropriate diffusive scaling of the weak-coupling regime is obtained for the following rescaled axial variable and choice of coupling strength:
\begin{equation}\label{eqn:scint_scaling}
    z\to \frac{\eta z}{\eps},\quad R^\eps(x)=\frac{\eps}{\eta^3}R(x),
\end{equation}
where $0<\eps\ll \eta\ll 1$ with a separation of scales $\eta=(\log|\log\eps|)^{-1}$ for technical reasons. The optical depth of the medium (for $z\approx 1$) is given by $\frac{\eta }{\eps}\frac{\eps}{\eta^3}=\frac{1}{\eta^2}\gg1$ when $\eta\ll1$. The so-called kinetic regime, corresponding to $\eta=1$ and an optical depth of order $O(1)$, could also be analyzed in the general setting of interest in this paper as in \cite{bal2024complex}. To simplify the presentation, we only consider the diffusive limit as $\eta\to0$. Note that the variable $x$ modeling the natural scale of the correlation length, is not rescaled. 

Under this scaling, the wavefield $u^\eps$ satisfies the SPDE
\begin{equation}\label{eqn:IS}
  \mathrm{d}u^\eps=\frac{i\eta}{2\eps\omega}\Delta_xu^\eps\mathrm{d}z-\frac{\omega^2R(0)}{8\eta^2}u^\eps\mathrm{d}z+\frac{i\omega}{2\eta}u^\eps\mathrm{d}B\,.  
\end{equation}
The aforementioned Gaussian structure can only be established for sufficiently broad incident beams of the form
\begin{equation}\label{eq:broad}
    u^\eps(0,x)=u^\eps_0(x)e^{i\sk\cdot x}=u_0(\eps x)e^{i\sk\cdot x}.
\end{equation}
Even broader beams $u_0(\eps^\beta x)$ for $\beta>1$ and their corresponding simplifications could be analyzed as in \cite{bal2024complex}. We focus on the richer case $\beta=1$. Here, $\sk$ models a lateral wavenumber of the incident wavefield, and we assume that $u_0\in\mathcal{S}(\mathbb{R}^d)$, the space of Schwartz functions on $\mathbb{R}^d$. 

In the kinetic regime with $\eta=1$, statistical moments up till four for broad beams have been shown to be consistent with those of a complex Gaussian distribution in~\cite{garnier2014scintillation, garnier2016fourth} for single frequency wavefields, and in~\cite{garnier2023speckle} to include shifts in frequency. An analysis of the Fourier transform of the wavefield itself, after compensating for a highly oscillatory phase is possible, and has been shown to be a complex Gaussian distribution after removing a deterministic component~\cite{bal2011asymptotics, gu2021gaussian}.

For fixed $(z,r,\omega)$, the Gaussian statistics of the macroscopic wavefield 
\begin{equation}\nonumber
    x\mapsto \upsilon^\eps(z,r,x)=u^\eps\Big(z,\frac{r}{\eps}+\eta x\Big)
\end{equation}
in the $\eps\to 0$ limit was established in~\cite{bal2024complex} for sources of the form $u^\eps_0(x)=\sum_{j=1}^Nf_j(\eps^\beta x)e^{ik_j\cdot x},\beta\ge 1$.  This paper focuses on the boundary conditions given by \eqref{eq:broad}. Generalizations of such results to partially coherent sources and to a wavelength-dependent paraxial model of wave propagation may be found in \cite{bal2025longPCB,bal2025long}. 
\medskip

Define $z=(z_1,\cdots,z_p,z'_1,\cdots,z'_q)$, $x=(x_1,\cdots,x_p,x'_1,\cdots,x'_q)$, $\omega=(\omega_1,\cdots,\omega_p,\omega'_1,\cdots,\omega'_q)$, as well as $\sk=(\sk_1,\cdots,\sk_p,\sk_1',\cdots,\sk'_q)$, for two integers $p,q\geq0$. The $p+q$th statistical moment of such fields is:
\begin{equation}\label{eq:mupq}
    \mu^\eps_{p,q}(z,x;\omega,\sk)=\mathbb{E}\prod_{j=1}^pu^\eps(z_j,x_j;\omega_j,\sk_j)\prod_{l=1}^qu^{\eps*}(z'_l,x'_l;\omega'_l,\sk'_l)\,,
\end{equation}
where we have explicitly indicated the dependence on the frequency and lateral wave number of the source in the last two arguments. The multiscale behavior is then encoded in the following scalings:
\begin{equation}\label{eqn:scalings}
    z_j=z_0+\eps\eta h_j,\quad \omega_j=\omega_0+\eps\eta\Omega_j,\quad \sk_j=\sk_0+\eps\kappa_j,\quad h_j\in\mathbb{R},\quad\Omega_j\in\mathbb{R},\quad\kappa_j\in\mathbb{R}^d .
\end{equation}
For fixed $(z_0, r,\omega_0,\sk_0)$ we thus introduce the following rescaled wavefield:
\begin{equation}\nonumber
    \upsilon^\eps(h,x;\Omega,\kappa)=u^\eps\Big(z_0+\eps\eta h,\frac{r}{\eps}+\eta x;\omega_0+\eps\eta\Omega,\sk_0+\eps\kappa\Big)\,.
\end{equation}
More generally, we consider a vector of such macroscopic wavefields  evaluated at $(h_j,x_j;\Omega_j,\kappa_j)\in\mathbb{R}^{2d+2}$:
\begin{equation}
\Upsilon^\eps =   \{\upsilon^\eps_j(h_j,x_j;\Omega_j,\kappa_j)\}_{j=1}^N=\{u^\eps(z_0+\eps\eta h_j,\eps^{-1}r+\eta x_j;\omega_0+\eps\eta\Omega_j,\sk_0+\eps\kappa_j)\}_{j=1}^N\,.
\end{equation}
Note that the statistical moments of $\Upsilon^\eps$ are all of the form given in \eqref{eq:mupq}.

\medskip

If $\mathcal{Z}=\{\mathcal{Z}_1,\cdots,\mathcal{Z}_N\}$ is a mean zero complex Gaussian random vector, then all its higher moments are given by products of second moments, as
\begin{equation}\nonumber
    \mathbb{E}\prod_{j=1}^p\mathcal{Z}_{s_j}\prod_{l=1}^q\mathcal{Z}_{t_l}=\begin{cases}
        0,\quad p\neq q\\
        \sum_{\pi_p}\prod_{j=1}^p\mathbb{E}\mathcal{Z}_{s_j}\mathcal{Z}^\ast_{t_{\pi_p(j)}},\quad p=q\,.
    \end{cases}
\end{equation}
Here, $p$ and $q$ are non-negative integers, $\pi_p$ is a permutation of $p$ integers from $\{1,\cdots,p\}$ (without replacement), and $s_j$ and $t_l$ are integers drawn from $\{1,\cdots,N\}$.

\medskip

The main objective of this paper is to show that the process $\upsilon^\eps$ is mean-zero, complex Gaussian as $\eps\to0$ for fixed values of $(z_0,r,\omega_0,\sk_0)$ and that its characterizing correlation function solves an appropriate diffusion-type equation. The memory effect mentioned in the title refers to the speckle correlations as the parameters $(h,x,\Omega,\kappa)$ vary. Such memory effects play an important role in the analysis of wavebeam propagation through random media \cite{osnabrugge2017generalized,zhu2020chromato,garnier2023speckle}.

\medskip

The rest of the paper is structured as follows. The main results of the paper are presented in section \ref{sec:main}. As in \cite{bal2024complex}, the derivation is based on a convergence result for finite-dimensional moments given in Theorem \ref{thm:finite_dim_mom} and on a compactness argument presented in Theorem \ref{thm:tightness}. The main technical novelty compared to \cite{bal2024complex} stems from the fact that second-order moments, analyzed in detail in section \ref{sec:second}, no longer have closed form solutions. This generates a number of difficulties, already partially addressed in \cite{garnier2023speckle}, to establish moment convergence in section \ref{sec:higher} as well as tightness results in section \ref{sec:tight}.

\section{Main results}\label{sec:main}
\paragraph{Assumption on the medium.} We assume that the lateral covariance $R(x)=R(-x)\in \sL^1(\mathbb{R}^d)\cap\sL^\infty(\mathbb{R}^d)$ is a smooth, symmetric function with a strict maximum at $x=0$ (that it is a maximum stems from the fact that its Fourier transform $\hat R(k)\geq0$). This also implies that $\hat{R}(k)\in \sL^1(\mathbb{R}^d)\cap\sL^\infty(\mathbb{R}^d)$. Next, we assume that there exists a radially symmetric $\hat\sfR(k)\in\sL^1(\mathbb{R}^d)$ such that $\hat{R}(k)\le \hat\sfR(k)=\hat{\sfR}(|k|)$ and that for every $e\in\mathbb{S}^{d-1}$ and $\tau\in\mathbb{R}^d$, $s\mapsto R(\tau+se)$ is integrable in $\tau$. We finally assume that the Hessian $-\Sigma:=\nabla^2 R(0)$ is negative definite and $\hat R(k)$ decays sufficiently rapidly. More precisely, we assume that in lateral dimension $d\ge 3$, $\langle k\rangle^{d-2}\hat{R}(k)\in\sL^\infty(\mathbb{R}^d)$, where $\langle k\rangle:=\sqrt{1+|k|^2}$. 
\paragraph{Convergence results.}
We now state our main convergence results as $\eps\to0$. 

\begin{theorem}[Convergence of finite dimensional distributions]\label{thm:finite_dim_mom}
The random vector $\Upsilon^\eps\Rightarrow\Upsilon$ in distribution as $\eps\to 0$ where $\Upsilon$ is a complex Gaussian random vector with entries $\{\upsilon_j\}_{j=1}^N$ satisfying
\begin{equation}\nonumber
    \mathbb{E}\upsilon_j=\mathbb{E}\upsilon_j\upsilon_l=0,\quad \mathbb{E}\upsilon_j\upsilon_l^\ast=m_{1,1}(z_0,r,h_{j,l},\tau_{j,l};\omega_0,\Omega_{j,l},\kappa_{j,l})\,,
\end{equation}
where $h_{j,l}=h_j-h_l$, $\tau_{j,l}=\tau_j-\tau_l$,  $\Omega_{j,l}=\Omega_l-\Omega_j$, $\kappa_{j,l}=\kappa_j-\kappa_l$, and $m_{1,1}$ is given by \eqref{eqn:m_11} below.
\end{theorem}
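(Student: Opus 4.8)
The plan is to establish the convergence in distribution by the method of moments. A mean-zero complex Gaussian vector of the type appearing in the statement (with vanishing pseudo-covariance $\mathbb{E}\upsilon_j\upsilon_l=0$) is uniquely determined by its Hermitian covariance $\mathbb{E}\upsilon_j\upsilon_l^\ast$, and all of its higher moments are fixed by the Wick pairing formula recalled above. It therefore suffices to prove that every mixed moment $\mu^\eps_{p,q}$ from \eqref{eq:mupq}, evaluated at the scaled arguments \eqref{eqn:scalings}, converges as $\eps\to0$ to $0$ when $p\neq q$ and to $\sum_{\pi_p}\prod_{j=1}^p m_{1,1}\big(z_0,r,h_{j,\pi_p(j)},\tau_{j,\pi_p(j)};\omega_0,\Omega_{j,\pi_p(j)},\kappa_{j,\pi_p(j)}\big)$ when $p=q$, and then to supply a uniform-in-$\eps$ growth bound on these moments so that the limiting moment sequence determines a unique law.

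The first step is to derive the deterministic evolution satisfied by $\mu^\eps_{p,q}$. Ordering the axial coordinates and using the Markov property of \eqref{eqn:IS} together with its Schrödinger propagator, I would reduce the unequal-height moment to a joint object on a common axial slice, the small separations $\eps\eta h_j$ entering only through the free propagator over short intervals (these produce the $h$-dependence of the limit). Applying Itô's formula to $\prod_j u^\eps\prod_l u^{\eps\ast}$ then yields a closed linear system of Schrödinger type: the Laplacian drifts of the $p$ forward and $q$ conjugated factors assemble into $\sum_j\tfrac{i\eta}{2\eps\omega_j}\Delta_{x_j}-\sum_l\tfrac{i\eta}{2\eps\omega'_l}\Delta_{x'_l}$, while the Itô correction from the multiplicative noise produces a real pairwise-interaction potential built from $R$, carrying a $+$ sign on each $u$–$u^\ast$ pair, a $-$ sign on each $u$–$u$ or $u^\ast$–$u^\ast$ pair, together with diagonal $R(0)$ contributions that combine with the damping $-\omega^2R(0)/8\eta^2$.

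Inserting the diffusive scaling \eqref{eqn:scint_scaling} and the broad-beam data \eqref{eq:broad}, I would factor out the fast phase $e^{i\sk\cdot x}$ and pass to the $\eps\to0$ limit of the rescaled system. The essential mechanism is oscillatory cancellation: a forward factor paired with a conjugated factor whose frequency and lateral wavenumber agree up to the $O(\eps)$ corrections yields a non-oscillatory, $O(1)$ contribution in which the shared $R(0)$ damping is exactly compensated by the $+R$ cross coupling, leaving (through the expansion $R(x)=R(0)-\tfrac12 x^\top\Sigma x+\cdots$ and $\nabla^2R(0)=-\Sigma$) an effective diffusion acting on the macroscopic envelope; by contrast, any unpaired factor, any $u$–$u$ or $u^\ast$–$u^\ast$ coupling, and in particular every configuration with $p\neq q$ retains a residual rapidly rotating phase that averages to zero. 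This selects precisely the $u$–$u^\ast$ pairings and forces the leading term of $\mu^\eps_{p,q}$ to factor, in the limit, as a sum over bijections $\pi_p$ of products of the second moments $m_{1,1}$, each solving the diffusion-type equation \eqref{eqn:m_11}.

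The main obstacle, as anticipated in the introduction, is that $m_{1,1}$ has no closed form once the frequency and angular shifts $(\Omega,\kappa)$ are present, so the higher moments cannot be computed explicitly and the Wick structure cannot simply be read off. I would instead proceed by induction on $p$: write $\mu^\eps_{p,q}$ as the paired leading term plus a remainder governed by the difference between the full $(p,q)$-coupling and the product ansatz, and show that this remainder is driven by sub-leading crossing interactions and by wavenumber/frequency mismatches in the propagator. Each such contribution is controlled by stationary-phase and oscillatory-integral estimates on the coupling integrals involving $R$ and $\hat R$; here the decay hypotheses on $\hat R$, and specifically $\langle k\rangle^{d-2}\hat R(k)\in\sL^\infty(\mathbb{R}^d)$ in dimension $d\ge3$, are exactly what render these integrals uniformly bounded and allow the induction to close, while the separation of scales $\eta=(\log|\log\eps|)^{-1}$ keeps the accumulated corrections negligible. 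Finally, the same coupling estimates yield a factorial-type moment bound uniform in $\eps$, which guarantees moment determinacy of the limiting law and thereby upgrades moment convergence to $\Upsilon^\eps\Rightarrow\Upsilon$ with $\Upsilon$ the asserted complex Gaussian.
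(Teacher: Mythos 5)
Your overall architecture matches the paper's: method of moments, reduction of the unequal-$z$ moments to a common axial slice with the short free propagation producing the $h$-dependence (this is Lemma~\ref{lem:phase_comp_pq}), the closed moment PDEs~\eqref{eqn:mu_pq_PDE_fixed_z}, a leading term given by a sum over $u$--$u^\ast$ pairings with the remainder controlled by oscillatory-integral estimates on the couplings (exactly the operator decomposition of Theorem~\ref{thm:N_pq} together with Propositions~\ref{prop:L_1_bound}--\ref{prop:L_2_bound}, where the hypothesis $\langle k\rangle^{d-2}\hat R\in\sL^\infty$ enters as you predict), and the Carleman criterion to conclude. One mechanical correction: in the diffusive regime the $p\neq q$ moments and the same-type ($u$--$u$, $u^\ast$--$u^\ast$) couplings are killed by the uncompensated $R(0)$ damping $e^{-cR(0)z_0/\eta^2}$, not by phase averaging; oscillatory cancellation is what controls the crossing terms indexed by $\bar\Lambda_m$.

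The genuine gap is in the second-moment building block itself. You assert that the expansion $R(\eta\tau)-R(0)\approx-\tfrac{\eta^2}{2}\tau^\top\Sigma\tau$ ``leaves an effective diffusion,'' so that $m^\eps_{1,1}\to m_{1,1}$ with $M_{1,1}$ solving~\eqref{eqn:M_11}. When $\Omega=0$ this can be made rigorous because~\eqref{eqn:M_11_eps} admits the explicit solution~\eqref{eqn:M_11_Omega=0}. When $\Omega\neq0$ there is no closed form, and the Taylor-expansion heuristic does not by itself prove convergence: the potential $\frac{\omega_0^2}{4\eta^2}[R(\eta\tau)-R(0)]$ is a singular perturbation, and one must justify that the operator $\mathcal{L}^\eta$ (the generator of a rescaled jump process in the dual variable $\xi$) converges to the diffusion generator $\frac{\omega_0^2}{8}\partial_\xi\cdot(\Sigma\partial_\xi)$ in a sense strong enough to pass to the limit in the presence of the imaginary potential $iV(\xi)$. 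The paper accomplishes this in Lemma~\ref{lemma:M_11} by duality with the adjoint equation and a Feynman--Kac representation of $\rho^\eps$ along the compound Poisson process $\chi^\eta(z)=\eta\chi(z/\eta^2)$, which converges to Brownian motion. Your proposal contains neither this argument nor a substitute for it, and your stationary-phase toolkit addresses only the higher-moment error terms, not the limit of the leading pair correlation. Since this is precisely the main technical novelty the paper identifies relative to~\cite{bal2024complex}, it cannot be elided; without it the limiting covariance in the statement is not identified.
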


The Gaussian random field is fully characterized by its second-order moments. They are constructed as follows.  Let $M_{1,1}(z,r,\tau)$ be the solution to the following evolution equation:
\begin{equation}\label{eqn:M_11}
            \partial_zM_{1,1}=\frac{i\Omega}{2\omega_0^2}\Delta_\tau {M}_{1,1}+\frac{i}{\omega_0}\partial_r\cdot\partial_\tau M_{1,1}-\frac{\omega_0^2}{8}(\tau^\top\Sigma\tau) M_{1,1},\quad M_{1,1}(0,r,\tau)=|u_0(r)|^2e^{ir\cdot\kappa}.
\end{equation}
Then $m_{1,1}$ appearing in Theorem \ref{thm:finite_dim_mom}  is defined as
\begin{equation}\label{eqn:m_11}
        \begin{aligned}
         m_{1,1}(z_0,r,h,\tau;\omega_0,\Omega,\kappa)&= \Big(\frac{\omega_0}{2\pi i h}\Big)^{d/2}\int_{\mathbb{R}^d}M_{1,1}(z_0,r,\tau-\tau';\Omega,\kappa)e^{\frac{i\omega_0}{2h}|\tau'|^2}\mathrm{d}\tau'.
        \end{aligned}
\end{equation}
These moments fully characterize the limiting distribution $\Upsilon$. Convergence of the random field $(h,x,\Omega,\kappa)\mapsto \upsilon^\eps(h,x;\Omega,\kappa)$ is then a consequence of the following stochastic continuity result:
\begin{theorem}[Tightness and stochastic continuity]\label{thm:tightness}
   For $j=1,2$, let $(h_j,x_j,\Omega_j,\kappa_j)\in\mathbb{R}^{2d+2}$ such that $|h_1-h_2|, |x_1-x_2|, |\Omega_1-\Omega_2|, |\kappa_1-\kappa_2|< 1$.  We have
    \begin{equation}\label{eqn:ups_tight}
        \mathbb{E}|\upsilon^\eps(h_1,x_1;\Omega_1,\kappa_1)-\upsilon^\eps(h_2,x_2;\Omega_2,\kappa_2)|^{2n}\le C_\alpha(n,d)(|h_1-h_2|^n+|x_1-x_2|^{2n\alpha}+|\Omega_1-\Omega_2|^{2n\alpha}+|\kappa_1-\kappa_2|^{2n\alpha})\,,
    \end{equation}
    where 
    $\alpha\in (0,1)$. Choosing $n$ large enough so that $ n\ge 2\alpha_-n+2d+2$ for arbitrary $\alpha_-\in(0,\frac12)$, we have that there exists a H\"{o}lder continuous version of $\upsilon^\eps$ on $C^{0,\alpha_-}(\mathbb{R}^{2d+2})$ and the process $\upsilon^\eps$ is tight on the same space.
\end{theorem}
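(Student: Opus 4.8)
The plan is to establish the quantitative increment bound \eqref{eqn:ups_tight} and then invoke a Kolmogorov--Chentsov continuity and tightness criterion. Write $P_j=(h_j,x_j,\Omega_j,\kappa_j)$ and $W^\eps=\upsilon^\eps(P_1)-\upsilon^\eps(P_2)$. Since $\mathbb{E}|W^\eps|^{2n}=\mathbb{E}(W^\eps)^n(\overline{W^\eps})^n$, expanding each factor $W^\eps=\upsilon^\eps(P_1)-\upsilon^\eps(P_2)$ by the binomial theorem writes this quantity as a signed sum of $(n,n)$-moments $\mu^\eps_{n,n}$ evaluated at arguments chosen from $\{P_1,P_2\}$. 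The first step is to feed these into the uniform-in-$\eps$ moment bounds produced in Section \ref{sec:higher}, which control $\mu^\eps_{n,n}$ by the corresponding Wick (pair-partition) sum of products of the second moment $m_{1,1}$, up to a multiplicative constant and uniformly small non-Gaussian corrections. The key algebraic observation is that the alternating signs in the binomial expansion telescope each Wick pairing into a second-order increment: in the exact-Gaussian limit one has $\mathbb{E}|W|^{2n}=n!\,(\mathbb{E}|W|^2)^n$, and the same cancellation, applied to the pre-limit bound, reduces $\mathbb{E}|W^\eps|^{2n}$ to $C_n(\mathbb{E}|W^\eps|^2)^n$ plus remainders that inherit the same increment factors. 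Matters thus reduce to the second-moment increment estimate, uniform in $\eps$,
\begin{equation}\nonumber
\mathbb{E}|\upsilon^\eps(P_1)-\upsilon^\eps(P_2)|^2\le C\big(|h_1-h_2|+|x_1-x_2|^{2\alpha}+|\Omega_1-\Omega_2|^{2\alpha}+|\kappa_1-\kappa_2|^{2\alpha}\big),
\end{equation}
whose $n$-th power yields \eqref{eqn:ups_tight} after $(a+b+c+d)^n\le 4^{n-1}(a^n+b^n+c^n+d^n)$.

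The second step is to prove this increment bound direction by direction. Writing $C^\eps(P_a,P_b)=\mathbb{E}\upsilon^\eps(P_a)\overline{\upsilon^\eps(P_b)}$, the increment equals $C^\eps(P_1,P_1)-C^\eps(P_1,P_2)-C^\eps(P_2,P_1)+C^\eps(P_2,P_2)$, so I need the joint regularity of $C^\eps$. For the axial variable $h$ I would work directly from \eqref{eqn:IS}: an $h$-increment corresponds to a $z$-increment of length $\eps\eta|h_1-h_2|$, whose deterministic Schr\"odinger part is smooth while whose martingale (noise) part has quadratic variation proportional to the interval length, so the It\^o isometry produces the Brownian-type bound $\mathbb{E}|W^\eps|^2\lesssim|h_1-h_2|$ --- the weakest of the four and the source of the exponent $n$ rather than $2n$ in \eqref{eqn:ups_tight}. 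For $x$, $\kappa$ and $\Omega$ I would pass through the limiting moment $m_{1,1}$ of \eqref{eqn:m_11}--\eqref{eqn:M_11}: $\kappa$ enters only through the Schwartz datum $|u_0(r)|^2e^{ir\cdot\kappa}$ and $x$ only through a translation of the Schr\"odinger convolution in \eqref{eqn:m_11}, both giving Lipschitz (hence H\"older-$\alpha$) dependence from the smoothness of $M_{1,1}$; whereas $\Omega$ sits in the top-order coefficient $\tfrac{i\Omega}{2\omega_0^2}\Delta_\tau$ of \eqref{eqn:M_11}, so $\partial_\Omega M_{1,1}$ solves a forced copy of \eqref{eqn:M_11} with source $\tfrac{i}{2\omega_0^2}\Delta_\tau M_{1,1}$, and controlling this forcing requires energy estimates that exploit the dissipative quadratic potential $-\tfrac{\omega_0^2}{8}\tau^\top\Sigma\tau$ together with the decay hypotheses on $\hat R$. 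Throughout, the pre-limit $C^\eps$ must be shown to satisfy the same bounds uniformly in $\eps$, which I would do by propagating the estimates through the finite-$\eps$ moment equations of Section \ref{sec:second} rather than only in the limit.

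The third step is the Kolmogorov--Chentsov argument on $\mathbb{R}^{2d+2}$. To obtain a jointly $\alpha_-$-H\"older modification I need each directional exponent in \eqref{eqn:ups_tight} to exceed the ambient dimension $2d+2$ by the H\"older budget $2n\alpha_-$. The binding constraint is the $h$-direction, whose exponent is $n$: for increments below $1$ one has $|h_1-h_2|^n\le|h_1-h_2|^{2d+2+2n\alpha_-}$ precisely when $n\ge 2\alpha_- n+2d+2$, which is the stated condition and gives H\"older exponent $(2d+2+2n\alpha_-)-(2d+2))/(2n)=\alpha_-$; the remaining directions satisfy $2n\alpha>2d+2+2n\alpha_-$ for $n$ large since $\alpha>\alpha_-$. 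Kolmogorov--Chentsov then furnishes a version of $\upsilon^\eps$ in $C^{0,\alpha_-}(\mathbb{R}^{2d+2})$, and because the constant $C_\alpha(n,d)$ and a single-point moment bound $\mathbb{E}|\upsilon^\eps(P_0)|^{2n}\le C$ (from the coincident-point bounds of Section \ref{sec:higher}) are independent of $\eps$, the same criterion yields tightness of $\{\upsilon^\eps\}$ on that space.

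The main obstacle I anticipate is the frequency direction combined with the absence of a closed form for $m_{1,1}$: unlike \cite{bal2024complex}, where the second moment is explicit and its increments can be read off, here the H\"older-$\alpha$ control in $\Omega$ and the uniform-in-$\eps$ transfer of all four increment bounds must be extracted from regularity theory for the non-explicit transport--Schr\"odinger equation \eqref{eqn:M_11}, in which $\Omega$ occupies the top-order symbol. Securing the forcing estimate for $\partial_\Omega M_{1,1}$ uniformly, and ensuring that the Wick-telescoping of Step~1 survives the pre-limit non-Gaussian corrections with the increment factors intact, are the two delicate points.
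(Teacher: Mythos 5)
Your skeleton matches the paper's: a directional triangle-inequality decomposition, a Brownian-type bound in $h$ explaining the exponent $n$, reduction of $2n$-th moments to second moments via the complex-Gaussian (Wick) structure, regularity of the non-explicit moment equation to handle $\Omega$, and a Kolmogorov--Chentsov conclusion. But there is a genuine gap in your Step~1, and it is precisely the point you flag at the end without resolving. The Gaussian summation rule of Section~\ref{sec:higher} controls $\mu^\eps_{n,n}$ by its Wick sum only up to an \emph{additive} error of size $O(\eps^{1/3})$ in total variation; this error does not carry any increment factor. Consequently the bound you obtain after telescoping is $\mathbb{E}|W^\eps|^{2n}\le C_n(\mathbb{E}|W^\eps|^2)^n+O(\eps^{1/3})$, and when the increment $\delta$ satisfies $\delta^{2n}\ll\eps^{1/3}$ the additive remainder dominates, so \eqref{eqn:ups_tight} does not follow. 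Your proposed fix --- ``propagating the estimates through the finite-$\eps$ moment equations'' --- is exactly where the work lies, and the paper's resolution is a dichotomy you do not supply: for increments below the threshold $\eps^{1/(6n)}$ one abandons the Gaussian approximation entirely and uses a crude Duhamel/Gr\"onwall bound on the phase-compensated $2n$-th moments, which \emph{does} carry the increment factor $\delta^{2n}$ (or $(\eps/\eta)^n|h|^n$ in the axial case, via Lemma~\ref{lemma:fact_sum}) but at the price of a factor $e^{cn^2/\eta^2}$; this factor is then absorbed using the scale separation $\eta=(\log|\log\eps|)^{-1}$, which makes $e^{c/\eta^2}$ sub-polynomial in $\eps^{-1}$, at the cost of degrading the exponent from $2n$ to $2n(1-\alpha)$. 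For increments above the threshold the Gaussian approximation applies because $\eps^{1/3}\le\delta^{2n}$. Without both branches the estimate fails on one side or the other.

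A secondary, smaller gap: even in the large-increment branch, your reduction to $\mathbb{E}|W^\eps|^2$ must still be turned into a uniform-in-$\eps$ increment bound on the \emph{pre-limit} correlation $C^\eps$, not on the limit $m_{1,1}$; the paper does this through the total-variation weighted bounds $\|\langle\xi\rangle^{2p}\hat M^\eps_{1,1}(z)\|\le C_p\|\langle\xi\rangle^{2p}\hat M^\eps_{1,1}(0)\|$ of Lemma~\ref{lemma:M_eps_11_reg} (proved by a Feynman--Kac comparison with a jump process, not by energy estimates on the limiting PDE), and for the $\Omega$-direction through the symmetrized differences $\tilde E^\eps=M^\eps_{1,1}(\Omega)-M^\eps_{1,1}(-\Omega)$ and $E^\eps=2M^\eps_{1,1}(0)-M^\eps_{1,1}(\Omega)-M^\eps_{1,1}(-\Omega)$ with Gr\"onwall, which recovers the full power $|\Omega|^2$ that a single first-order forcing estimate on $\partial_\Omega M_{1,1}$ would lose. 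Your Kolmogorov--Chentsov step and the identification of the binding constraint $n\ge 2\alpha_-n+2d+2$ from the $h$-direction are correct and agree with the paper.
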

The two theorems above then classically lead to the following result \cite{billingsley2017probability}:
\begin{theorem}[Convergence of processes]
    The processes whose finite dimensional distributions are shown to converge in Theorem~\ref{thm:finite_dim_mom} converge in distribution as probability measures on $C^{0,\alpha_-}(\mathbb{R}^{2d+2})$.
\end{theorem}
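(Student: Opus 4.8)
The plan is to invoke the standard two-step criterion for weak convergence of probability measures on a Polish path space, exactly as in \cite{billingsley2017probability}: a \emph{tight} family whose \emph{finite-dimensional distributions converge} to those of a limit that is uniquely determined by its finite-dimensional laws converges weakly to that limit. Theorem~\ref{thm:tightness} supplies the tightness, Theorem~\ref{thm:finite_dim_mom} supplies the convergence of finite-dimensional distributions, and the Gaussian limit $\Upsilon$ is uniquely pinned down by the second-moment kernel $m_{1,1}$ of \eqref{eqn:m_11}, so both ingredients are already in hand and the remaining work is purely to assemble them.

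Concretely, I would first record that by Theorem~\ref{thm:tightness} the family $\{\upsilon^\eps\}_{\eps>0}$ is tight on the relevant H\"older path space. By Prokhorov's theorem this makes the family relatively compact for weak convergence, so every sequence $\eps_k\to 0$ admits a subsequence along which $\upsilon^{\eps_{k_j}}$ converges in law to some limiting measure $\mathbb{Q}$ on $C^{0,\alpha_-}(\mathbb{R}^{2d+2})$.

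Second, I would identify every such subsequential limit. The evaluation functionals $f\mapsto(f(p_1),\dots,f(p_m))$ at finitely many points are continuous on the path space, so weak convergence along the subsequence forces the finite-dimensional distributions of $\mathbb{Q}$ to coincide with the limits produced by Theorem~\ref{thm:finite_dim_mom}, namely the finite-dimensional laws of the mean-zero complex Gaussian field with covariance $m_{1,1}$. Because the Borel $\sigma$-algebra of a separable path space is generated by these evaluation maps, the finite-dimensional distributions determine a measure uniquely; hence $\mathbb{Q}$ must equal the law of the limiting Gaussian process, independently of the chosen subsequence. Since every subsequential weak limit equals this single measure, the whole family converges in distribution to it, which is the assertion.

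The main technical point to handle with care is the choice of topology on the unbounded-domain H\"older space so that the Prokhorov/uniqueness machinery applies cleanly: the full H\"older norm on $C^{0,\alpha_-}$ is nonseparable, so I would realize tightness through the compact embedding $C^{0,\alpha_-}(K)\hookrightarrow C^{0,\alpha}(K)$ for any $\alpha<\alpha_-$ on compact sets $K$ --- the uniform moment bound \eqref{eqn:ups_tight}, combined with the Kolmogorov--Chentsov criterion, produces exactly the bounded H\"older balls whose closures are compact in the weaker exponent --- and then pass from compact sets to all of $\mathbb{R}^{2d+2}$ by a diagonal (projective-limit) argument in the topology of local uniform convergence, on which the space is Polish and the coordinate maps remain continuous. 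Once the path space is set up as a Polish space with continuous evaluations, the steps above are routine; isolating this functional-analytic setup is where essentially all of the genuine care resides.
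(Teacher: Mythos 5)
Your proposal is correct and follows exactly the route the paper intends: the paper gives no separate proof of this theorem, simply invoking the classical criterion (tightness plus convergence of finite-dimensional distributions implies weak convergence on the path space) with a citation to Billingsley, which is precisely the Prokhorov-plus-identification argument you spell out. Your additional care about realizing $C^{0,\alpha_-}(\mathbb{R}^{2d+2})$ as a Polish space via compact exhaustion and the compact embedding into a weaker H\"older exponent is a sensible elaboration of a point the paper leaves implicit, not a departure from its method.
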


Theorem \ref{thm:finite_dim_mom} is proved in sections \ref{sec:second} and \ref{sec:higher} while Theorem \ref{thm:tightness} is proved in section \ref{sec:tight}. The computations of section \ref{sec:second} also provide explicit characterizations of the second-order moment $m_{1,1}$ in \eqref{eqn:m_11} that are useful in the understanding of spatial and frequential memory effects in speckle. 

\paragraph{Speckle memory effect.}

The moments $m_{1,1}$ in \eqref{eqn:m_11} do not seem to exhibit a closed-form expression in general. We consider several simplified scenarios of practical interest where explicit computations are feasible. 

Applications may be found in physical experiments of laser propagation through strongly scattering media, where a tilt in the source is observed to generate a tilt in the opposite direction in the generated speckle pattern. Similarly, shifts in the frequency lead to shifts in the speckle along the axis of propagation. These effects are collectively known as `memory effects' in the physical literature. We quantify two such memory effects, under the diffusive regime described previously and retrieve expressions of correlation functions first derived in \cite{osnabrugge2017generalized, zhu2020chromato} using a Feynman path integral formulation. Also, see~\cite{garnier2023speckle} for an analysis of decorrelation in frequency under the It\^o-Schr\"{o}dinger regime and their application to refocusing of signals in time reversal experiments. 

The first effect we wish to consider is the tilt memory effect \cite{osnabrugge2017generalized}. Suppose $\Omega=h=0$ and we tilt two sources symmetrically by $\Delta\kappa'/2$. We also tilt the wavefields at the receiver symmetrically by $-\Delta\kappa/2$. Such an effect is best modelled by considering the correlation function
\begin{equation}\nonumber
    \mathscr{C}_z(\tau,\Delta\kappa,\Delta\kappa')=\int_{\mathbb{R}^d}\mathbb{E}[u(z,r+\tau/2;\sk_1)u^\ast(z,r-\tau/2;\sk_2)]e^{-i\Delta\kappa\cdot r}\mathrm{d}r\,,
\end{equation}
where $\sk_1-\sk_2=\Delta\kappa$. We consider the diffusive scaling~\eqref{eqn:scint_scaling}, with the wavefield $u^\eps$ following the It\^o-Schr\"{o}dinger equation~\eqref{eqn:IS}. Let $m_{1,1}^\eps(z,r,\tau;\Delta\kappa')=\mathbb{E}[u^\eps(z,r/\eps+\eta\tau/2;\eps\Delta\kappa'/2)u^{\eps\ast}(z,r/\eps-\eta\tau/2;-\eps\Delta\kappa'/2)]$ and let $D_\sigma$ be the diffusion kernel
 \begin{equation}\label{eqn:D_sigma}
    D_{\sigma}(z,\tau,\xi)=\exp\big(-\frac{\omega_0^2\sigma^2 z}{8}\int_0^1\big|\tau+\frac{s\xi z}{\omega_0}\big|^2\mathrm{d}s\big)\,.
\end{equation}
Here we have assumed for simplicity, that $\Sigma=\sigma^2\mathbb{I}_d$, although more general noise models can be dealt with after minor modifications.  Denote by $\mathscr{C}_z^\eps$, the corresponding correlation. We then have the following.
 \begin{theorem}[Tilt memory effect]\label{thm:tilt_mem}
 In the diffusive regime, $\lim_{\eps\to 0}\mathscr{C}_z^\eps=\mathscr{C}_z$, where
\begin{equation}\nonumber
\begin{aligned}
  \mathscr{C}_z(\tau,\Delta\kappa,\Delta\kappa')&=D_{\sigma}(z,\tau,\Delta\kappa)\check{\Gamma}(\Delta\kappa-\Delta\kappa',0)\,,
\end{aligned}
\end{equation}
and $\check{\Gamma}(\kappa,0)=\int_{\mathbb{R}^d}|u_0(r)|^2e^{-ir\cdot\kappa}\mathrm{d}r$. Moreover, suppose $\check{\Gamma}(k,0)$ is maximal at $k=0$. Then the correlation $\mathscr{C}_z$ is maximized for the choice $\Delta\kappa'=\Delta\kappa=-\frac{3\omega_0\tau}{2z}$. 
\end{theorem}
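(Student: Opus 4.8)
The plan is to reduce the claim to the already-available characterization of $m_{1,1}$ and then specialize the scaling parameters to the symmetric-tilt configuration $\Omega = h = 0$, $\sk_1 = \eps\Delta\kappa'/2$, $\sk_2 = -\eps\Delta\kappa'/2$. First I would identify $\mathscr{C}_z^\eps$ as a Fourier transform in $r$ of the scaled two-point moment $m_{1,1}^\eps$, so that passing $\eps\to 0$ amounts to invoking Theorem~\ref{thm:finite_dim_mom} to replace $m_{1,1}^\eps$ by its limit $m_{1,1}$ and justifying the interchange of the limit with the $r$-integration (dominated convergence, using the decay of $u_0\in\mathcal S(\mathbb R^d)$ and the uniform moment bounds underlying tightness in Theorem~\ref{thm:tightness}). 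This gives $\mathscr{C}_z = \int_{\mathbb R^d} m_{1,1}(z,r,h{=}0,\tau;\omega_0,\Omega{=}0,\kappa) \, e^{-i\Delta\kappa\cdot r}\,\rd r$ with the identification $\kappa = \Delta\kappa'$ coming from the lateral-wavenumber shift.

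Next I would compute $m_{1,1}$ explicitly in the regime $h=0$, $\Omega=0$. When $h=0$ the Gaussian convolution kernel $(\omega_0/2\pi i h)^{d/2}e^{i\omega_0|\tau'|^2/2h}$ in \eqref{eqn:m_11} degenerates to a Dirac mass at $\tau'=0$, so $m_{1,1}(z,r,0,\tau;\omega_0,0,\kappa) = M_{1,1}(z,r,\tau;\Omega{=}0,\kappa)$. With $\Omega=0$ the evolution equation \eqref{eqn:M_11} loses its $\Delta_\tau$ term and becomes the transport-plus-decay equation $\partial_z M_{1,1} = \tfrac{i}{\omega_0}\partial_r\cdot\partial_\tau M_{1,1} - \tfrac{\omega_0^2}{8}(\tau^\top\Sigma\tau)M_{1,1}$, with data $M_{1,1}(0,r,\tau)=|u_0(r)|^2 e^{ir\cdot\kappa}$. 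The key step is to solve this linear first-order PDE by the method of characteristics: the mixed derivative $\partial_r\cdot\partial_\tau$ couples $r$ and $\tau$ so that, along characteristics, $r$ advances in a direction set by the conjugate variable to $\tau$, and integrating the potential $-\tfrac{\omega_0^2}{8}\tau^\top\Sigma\tau$ along the transported trajectory produces exactly the time-integrated quadratic form appearing in $D_\sigma$ from \eqref{eqn:D_sigma}. I expect the cleanest route is to take the Fourier transform in $r$, which turns the transport term into a first-order ODE in $z$ whose coefficient shifts $\tau$ linearly in $z$; the solution then factors as the diffusion kernel $D_\sigma(z,\tau,\Delta\kappa)$ (here $\xi=\Delta\kappa$ is the Fourier dual of $r$, with $\Sigma=\sigma^2\mathbb I_d$) times the Fourier-transformed data $\check\Gamma(\Delta\kappa-\Delta\kappa',0)$, yielding the stated formula $\mathscr{C}_z = D_\sigma(z,\tau,\Delta\kappa)\,\check\Gamma(\Delta\kappa-\Delta\kappa',0)$.

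For the maximization claim, I would note that the two factors depend on $\Delta\kappa,\Delta\kappa'$ in a decoupled way. Since $\check\Gamma(\cdot,0)$ is assumed maximal at $0$, the factor $\check\Gamma(\Delta\kappa-\Delta\kappa',0)$ is maximized by imposing $\Delta\kappa'=\Delta\kappa$. It remains to maximize $D_\sigma(z,\tau,\Delta\kappa)$ over $\Delta\kappa$; since $D_\sigma=\exp(-\tfrac{\omega_0^2\sigma^2 z}{8}\int_0^1|\tau+s\Delta\kappa z/\omega_0|^2\,\rd s)$ is a decreasing function of the quadratic $\int_0^1|\tau + s\Delta\kappa z/\omega_0|^2\,\rd s$, I would minimize this scalar integral by differentiating in $\Delta\kappa$. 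The stationarity condition is $\int_0^1 s(\tau + s\Delta\kappa z/\omega_0)\,\rd s = 0$, i.e. $\tfrac12\tau + \tfrac13(\Delta\kappa z/\omega_0)=0$, giving $\Delta\kappa = -\tfrac{3\omega_0\tau}{2z}$, which is the asserted optimum; convexity of the integrand in $\Delta\kappa$ confirms it is a minimum. The main obstacle is the characteristics computation in the second paragraph—correctly tracking how the $\partial_r\cdot\partial_\tau$ coupling, when resolved in Fourier, shifts the argument of the accumulated potential to reproduce precisely the $s$-integrated form of $D_\sigma$; everything else is bookkeeping and a one-variable optimization.
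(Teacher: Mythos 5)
Your argument arrives at the correct formula and the optimization step is exactly the paper's (complete the square in $D_\sigma$, or equivalently minimize $\int_0^1|\tau+s\Delta\kappa z/\omega_0|^2\,\rd s$, then set $\Delta\kappa'=\Delta\kappa$ to maximize $\check\Gamma$), but the route to the formula for $\mathscr{C}_z$ is genuinely different. The paper does \emph{not} pass through the limiting PDE \eqref{eqn:M_11}: since $\Omega=h=0$, it invokes the explicit pre-limit representation \eqref{eqn:M_11_Omega=0} of $M^\eps_{1,1}$ (available in closed form precisely because $\Omega=0$ kills the $\Delta_\tau$ term in \eqref{eqn:M_11_eps}), Fourier transforms in $r$ so that the factor $e^{i\xi\cdot(r-r')}$ collapses the $\xi$-integral onto $\xi=\Delta\kappa$, and only then sends $\eps\to0$ in a scalar integrand where $u_0\in\mathcal S(\R^d)$ makes the $r'$-integral converge to $\check\Gamma(\Delta\kappa-\Delta\kappa',0)$ and $\eta^{-2}Q(\eta\,\cdot)\to-\frac12(\cdot)^\top\Sigma(\cdot)$ produces $D_\sigma$. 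Your route — first take $\eps\to0$ to get \eqref{eqn:M_11}, then solve the resulting transport-plus-decay equation by characteristics after Fourier transforming in $r$ — yields the same exponential of a $z$-integrated quadratic, and is arguably more self-contained since it does not quote the closed-form solution from \cite{bal2024complex,garnier2014scintillation}. What the paper's route buys is that the troublesome interchange of $\lim_{\eps\to0}$ with $\int_{\R^d}\cdot\,e^{-i\Delta\kappa\cdot r}\,\rd r$ never arises: the convergence statements of Lemma \ref{lemma:M_11} and Corollary \ref{coro:m_11_eps_limit} are for fixed $r$ (weak-$\ast$ in the dual variables), so your appeal to dominated convergence needs an $\eps$-uniform integrable dominant in $r$, which you assert from the decay of $u_0$ but do not construct; with the explicit formula this is immediate.

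Two smaller points. First, your ``main obstacle'' is real: $D_\sigma(z,\tau,\xi)$ in \eqref{eqn:D_sigma} is not even in $\xi$ (the cross term $z\,\xi\cdot\tau/\omega_0$ changes sign), so a sign slip in the characteristics — e.g.\ whether $\widehat{\partial_r}=i\zeta$ turns $\frac{i}{\omega_0}\partial_r\cdot\partial_\tau$ into transport at velocity $+\zeta/\omega_0$ or $-\zeta/\omega_0$, together with the orientation conventions in $\Gamma(r,\tau)=u_0(r-\tau/2)u_0^\ast(r+\tau/2)$ and in \eqref{eqn:m_11_def} — flips the sign of the optimal tilt, which is the physically meaningful content of the theorem. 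You must carry this through explicitly and check it against \eqref{eqn:M_11_Omega=0}. Second, your reduction $m_{1,1}=M_{1,1}$ at $h=0$ via degeneration of the Fresnel kernel to a Dirac mass is correct and matches how the paper uses Proposition \ref{prop:m_11_expn} with $h=0$.
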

In applications of microscopy and adaptive optics, it is desirable to increase spatial correlations as much as possible so as to maximize the scan range of scattered wavefields~\cite{mertz2015field, osnabrugge2017generalized}. For a choice of $(\Delta\kappa, \Delta\kappa')$ as in Theorem~\ref{thm:tilt_mem}, the optimal correlation $\mathscr{C}_{z,opt}=e^{-\frac{\sigma^2\omega_0^2z|\tau|^2}{32}}\check{\Gamma}(0,0)$. This leads to an improvement of a factor $2$ of the correlation width in $\tau$ compared to the  case of no tilt, where $\mathscr{C}_z(\tau,0,0)=e^{-\frac{\sigma^2\omega_0^2z|\tau|^2}{8}}\check{\Gamma}(0,0)$.  

Another interesting memory effect is related to lateral shifts in the source $\Delta x\mapsto u_0(\eps(x+\Delta x))$ and their relation to tilts \cite{osnabrugge2017generalized}. We do not discuss such a case here as they require spatially incoherent incident beams to have a meaningful order $O(1)$ effect, i.e. fields that vary at the scale $u_0(x/\eta)$ rather than the spatially very coherent fields of the form $u_0(\eps x)$ we considered so far. Spatially partially incoherent fields are in fact those of the form $u^\eps(z_0,x)$ for some $z_0>0$. Considering such fields as incident beams before applying spatial shifts and tilts would allow us to retrieve the general memory effects analyzed in \cite{osnabrugge2017generalized}. This effect will be analyzed in more detail elsewhere.



 \medskip
 
We now discuss a chromato-axial memory effect in some detail. This effect was analyzed in \cite{zhu2020chromato}. Decorrelation effects in frequency for a fixed propagation range were also considered in \cite{garnier2023speckle}. 

We consider plane wave sources and noise $\Sigma=\sigma^2\mathbb{I}_d$, with generalizations to sources of the form~\eqref{eq:broad} and covariances $\Sigma\succ 0$ left to the reader. Let 
 \begin{equation}\nonumber
 a(z_0)=\frac{1}{2}\log[\cosh{\alpha_\Omega z_0}],\quad b(z_0)=\frac{\omega^2\sigma^2}{8\alpha_\Omega}\tanh{\alpha_\Omega z_0},\quad  \alpha_\Omega=e^{i\pi/4}\sqrt{\frac{\sigma^2\Omega}{4}}\,,   
 \end{equation}
 and let $\mathfrak{b}=b_R+i(b_I-\frac{\omega_0}{2h})$, where $b_R=Re(b), b_I=Im(b)$, with similar notation for $\mathfrak{b}$.
 We have the following.
\begin{theorem}[Chromato-axial memory effect]\label{thm:chrom_ax_mem}
When $\Sigma=\sigma^2\mathbb{I}_d$, and source $u^\eps(z=0)=1$, the two-point correlation~\eqref{eqn:m_11} is given by
\begin{equation}\nonumber
   m_{1,1}(h,\tau;\Omega)=\Big(\frac{\omega_0}{2ih\mathfrak{b}}\Big)^{d/2}e^{-a(z_0)}e^{-\frac{\omega_0^2\mathfrak{b}_R|\tau|^2}{4h^2|\mathfrak{b}|^2}}e^{\frac{i\omega_0|\tau|^2}{2h}(1+\frac{\omega_0\mathfrak{b}_I}{2h|\mathfrak{b}|^2})}\,. 
\end{equation}
  Moreover, for fixed $\Omega$, expanding $a$ and $b$ for small values of $\alpha_\Omega z_0$ gives that a choice of $h=-\frac{z_0\Omega}{3\omega_0}$ leads to larger correlations at $\tau=0$, a reflection of the chromato-axial memory effect in this regime.
\end{theorem}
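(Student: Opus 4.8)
The plan is to solve the transport equation \eqref{eqn:M_11} for $M_{1,1}$ in closed form under the stated simplifications, and then evaluate the Fresnel convolution \eqref{eqn:m_11}. With $u_0\equiv 1$ the datum $M_{1,1}(0,r,\tau)=1$ is independent of $r$; since the only $r$-coupling in \eqref{eqn:M_11} is the term $\frac{i}{\omega_0}\partial_r\cdot\partial_\tau M_{1,1}$, the ansatz that $M_{1,1}$ stays $r$-independent is self-consistent, and the equation collapses to $\partial_z M_{1,1}=\frac{i\Omega}{2\omega_0^2}\Delta_\tau M_{1,1}-\frac{\omega_0^2\sigma^2}{8}|\tau|^2 M_{1,1}$, a Schr\"odinger equation with isotropic quadratic potential. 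Because both the Laplacian and the potential are isotropic and the datum is constant, I look for a Gaussian solution $M_{1,1}(z,\tau)=A(z)\exp(-B(z)|\tau|^2)$ with $A(0)=1$, $B(0)=0$.

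Substituting the ansatz and matching the constant and the $|\tau|^2$ coefficients separately reduces the PDE to two ODEs: a scalar Riccati equation $B'=\frac{\omega_0^2\sigma^2}{8}-\frac{2i\Omega}{\omega_0^2}B^2$ for the complex inverse width, and the linear equation $A'/A=-\frac{i d\Omega}{\omega_0^2}B$ for the amplitude. Writing $D=\frac{i\Omega}{2\omega_0^2}$ and $V=\frac{\omega_0^2\sigma^2}{8}$, the Riccati equation integrates to $B(z)=\sqrt{V/(4D)}\,\tanh(\sqrt{4DV}\,z)$; one checks $\sqrt{4DV}=\alpha_\Omega$ and $\sqrt{V/(4D)}=\frac{\omega_0^2\sigma^2}{8\alpha_\Omega}$, so that $B(z_0)=b(z_0)$. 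Integrating the amplitude equation via $\int_0^{z_0}\tanh(\alpha_\Omega z)\,\mathrm{d}z=\alpha_\Omega^{-1}\log\cosh(\alpha_\Omega z_0)$ and the identity $\frac{i\Omega}{\omega_0^2}\frac{\sqrt{V/(4D)}}{\alpha_\Omega}=\frac12$ gives $A(z_0)=\exp(-\frac{d}{2}\log\cosh\alpha_\Omega z_0)=e^{-d\,a(z_0)}$, matching the stated amplitude up to the dimensional factor $d$ in the exponent, which arises from the trace of $\Delta_\tau$.

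Next I insert $M_{1,1}(z_0,\tau)=e^{-d a(z_0)}e^{-b(z_0)|\tau|^2}$ into \eqref{eqn:m_11} and evaluate the resulting complex Gaussian integral by completing the square in $\tau'$. The quadratic form in $\tau'$ has coefficient $\frac{i\omega_0}{2h}-b$, whose negative is exactly $\mathfrak{b}=b_R+i(b_I-\frac{\omega_0}{2h})$; provided $\mathrm{Re}\,\mathfrak{b}=b_R>0$ the integral converges and produces the prefactor $(\omega_0/(2\pi i h))^{d/2}\pi^{d/2}\mathfrak{b}^{-d/2}=(\omega_0/(2ih\mathfrak{b}))^{d/2}$ together with the Gaussian $\exp(\frac{i\omega_0 b}{2h\mathfrak{b}}|\tau|^2)$. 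Splitting $\frac{b}{\mathfrak{b}}=\frac{b\bar{\mathfrak{b}}}{|\mathfrak{b}|^2}$ into real and imaginary parts and using $\mathfrak{b}_R=b_R$, $\mathfrak{b}_I=b_I-\frac{\omega_0}{2h}$ reproduces the factors $e^{-\omega_0^2\mathfrak{b}_R|\tau|^2/(4h^2|\mathfrak{b}|^2)}$ and $e^{(i\omega_0|\tau|^2/2h)(1+\omega_0\mathfrak{b}_I/(2h|\mathfrak{b}|^2))}$ of the claimed formula; the one algebraic simplification needed for the imaginary part is the identity $|\mathfrak{b}|^2+\frac{\omega_0\mathfrak{b}_I}{2h}=b_R^2+b_I^2-\frac{\omega_0 b_I}{2h}$. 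This bookkeeping with complex coefficients — in particular fixing the branch of $\mathfrak{b}^{-d/2}$ and justifying convergence of the oscillatory Fresnel integral through $\mathrm{Re}\,b>0$ — is where essentially all the care is required; the rest is algebra.

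For the memory-effect statement, at $\tau=0$ all Gaussian factors are unity and $|m_{1,1}(h,0;\Omega)|=|\omega_0/(2ih\mathfrak{b})|^{d/2}e^{-d\,\mathrm{Re}\,a}$, with the amplitude independent of $h$; hence maximizing the peak correlation over the axial shift $h$ is equivalent to minimizing $|h\mathfrak{b}|^2=h^2 b_R^2+(hb_I-\frac{\omega_0}{2})^2$. The stationarity condition (the second derivative $2(b_R^2+b_I^2)>0$ confirms a minimum) gives $h_{\mathrm{opt}}=\frac{\omega_0 b_I}{2(b_R^2+b_I^2)}$. Expanding $\tanh(\alpha_\Omega z_0)=\alpha_\Omega z_0-\frac13(\alpha_\Omega z_0)^3+\cdots$ and using $\alpha_\Omega^2=\tfrac{i\sigma^2\Omega}{4}$ yields $b_R\approx\frac{\omega_0^2\sigma^2 z_0}{8}$ and $b_I\approx-\frac{\omega_0^2\sigma^4\Omega z_0^3}{96}$, so that $b_R^2+b_I^2\approx b_R^2$ to leading order; substituting into $h_{\mathrm{opt}}$ and retaining the leading term produces $h_{\mathrm{opt}}=-\frac{z_0\Omega}{3\omega_0}$, the announced chromato-axial relation.
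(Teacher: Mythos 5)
Your proposal is correct and follows essentially the same route as the paper: a Gaussian ansatz for the quadratic-potential Schr\"odinger equation reducing to a Riccati ODE for $b$ (the paper passes through the Fourier variable $\zeta$ and a four-parameter ansatz $a,b,c,d$, but for the plane-wave source $\check\Gamma=(2\pi)^d\delta_0(\zeta)$ this collapses to your two-parameter version), followed by the same Fresnel integral, the same minimization of $|h\mathfrak{b}|^2$ giving $h_{opt}=\frac{\omega_0 b_I}{2(b_R^2+b_I^2)}$, and the same small-$\alpha_\Omega z_0$ expansion. The one substantive point you raise is real: your amplitude equation $A'/A=-2dDB$ correctly carries the trace of $\Delta_\tau$ and yields $e^{-d\,a(z_0)}$, whereas the paper's ODE $a'=i\Omega b/\omega_0^2$ and the stated prefactor $e^{-a(z_0)}$ correspond to $d=1$; this dimensional factor does not affect the $h$-optimization or the chromato-axial conclusion.
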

The above result implies that choosing $h$ as above leads to a correlation $|m_{1,1}|=\big(1+\frac{b_I^2}{b_R^2}\big)e^{-a}$. When $h=0$, the corresponding value is $|m_{1,1}(h=0)|=e^{-a}$. So choosing $h$ optimally leads to an optimal improvement of a factor of $(1+\frac{b_I^2}{b_R^2})$ in the correlation factor and hence to a larger memory effect.

These results are derived in detail in section \ref{sec:second}.
%
\section{Analysis of second-order moments}
\label{sec:second}
The first-order moment is given for $(p,q)=(1,0)$ by $\mu^\eps_{1,0}(z,x;\omega,\sk)=\mathbb{E}u^\eps(z,x;\omega,\sk)$ and solves the equation
\begin{equation}\label{eqn:mu10}
    \partial_z\mu^\eps_{1,0}=\frac{i\eta}{2\eps\omega}\Delta_x\mu^\eps_{1,0}-\frac{\omega^2R(0)}{8\eta^2}\mu^\eps_{1,0},\quad \mu^\eps_{1,0}(0,x)=u_0^\eps(x)e^{i\sk\cdot x}\,.
\end{equation}
From, e.g., \cite{bal2024complex}, this equation can be solved explicitly using Fourier transforms to give
\begin{equation}\nonumber 
    m^\eps_{1,0}(z,r,x;\omega,\sk)=\mu^\eps_{1,0}(z,\frac r\eps+\eta x;\omega,\sk)=e^{-\frac{\omega^2R(0)z}{8\eta^2}}\int_{\mathbb{R}^d}\hat{u}_0(\xi)e^{-\frac{iz\eta}{2\eps\omega}|\sk+\eps\xi|^2}e^{i\eps^{-1}r\cdot(\sk+\eps \xi)}e^{i\eta x\cdot(\sk+\eps\xi)}\frac{\mathrm{d}\xi}{(2\pi)^d}\,,
\end{equation}
where $\hat{u}_0(\xi)=\int_{\mathbb{R}^d}u_0(x)e^{-i\xi\cdot x}\mathrm{d}x$ denotes the Fourier transform of $u_0$. This term would need to be analyzed in detail in the kinetic regime $\eta\approx1$. In the diffusive regime, where the coherence of the incident beam is exponentially suppressed, the mean field vanishes in the limit $\eta\to0$. This justifies the limit $\E\upsilon_j=0$ in Theorem \ref{thm:finite_dim_mom}.

\medskip

Define now the second-order moment for $(p,q)=(1,1)$ by
\begin{equation}\nonumber
    \mu^\eps_{1,1}(z_1,z_2,x,y;\omega_1,\omega_2,\sk_1,\sk_2)=\mathbb{E}u^\eps(z_1,x;\omega_1,\sk_1)u^{\eps\ast}(z_2,y;\omega_2,\sk_2)\,.
\end{equation}
We do not have a closed form equation available immediately for this function unless $z_1=z_2$. However, after an appropriate phase compensation, the above moment turns out to be the same as the phase compensated second-order moment evaluated at a fixed propagation distance $\min(z_1,z_2)$; see Lemma \ref{lemma:psi_11} below. 

\subsection{Phase compensation}
Define the partial Fourier transform
$\hat{u}^\eps(z,\xi;\cdot)=\int_{\mathbb{R}^d}u^\eps(z,x;\cdot)e^{-i\xi\cdot x}\mathrm{d}x$.
 It solves
\begin{equation}\nonumber
    \mathrm{d}\hat{u}^\eps=-\frac{i\eta}{2\eps\omega}|\xi|^2\hat{u}^\eps\mathrm{d}z-\frac{\omega^2R(0)}{8\eta^2}\hat{u}^\eps\mathrm{d}z+\frac{i\omega}{2\eta}\int_{\mathbb{R}^d}\hat{u}^\eps(z,\xi-k)\frac{\mathrm{d}\hat{B}(z,k)}{(2\pi)^d},\quad \hat{u}^\eps(z=0,\xi;\omega,\sk)=\hat{u}^\eps_0(\xi-\sk)\,.
\end{equation}
Next we define the phase compensated field
\begin{equation}\label{eqn:psi_def}
    \psi^\eps(z,\xi;\omega,\sk)=\hat{u}^\eps(z,\xi;\omega,\sk)e^{\frac{i\eta z}{2\eps\omega}|\xi|^2}e^{\frac{\omega^2R(0)z}{8\eta^2}}\,.
\end{equation}
This solves the equation
\begin{equation}\label{eqn:phase_comp}
    \mathrm{d}\psi^\eps=\frac{i\omega}{2\eta}\int_{\mathbb{R}^d}\psi^\eps(z,\xi-k)e^{\frac{i\eta zg(\xi,k)}{2\eps\omega}}\frac{\mathrm{d}\hat{B}(z,k)}{(2\pi)^d},\quad \psi^\eps(0,\xi;\cdot)=\hat{u}^\eps(0,\xi;\cdot)\,,
\end{equation}
where the phase $g(\xi,k):=|\xi|^2-|\xi-k|^2=-|k|^2+2\xi\cdot k$. 
 For a product of two such phase compensated fields, let $\Psi_{1,1}^\eps(z_1,z_2,\xi,\zeta;\omega_1,\omega_2,\sk_1,\sk_2)=\mathbb{E}\psi^\eps(z_1,\xi;\omega_1,\sk_1)\psi^{\eps\ast}(z_2,\zeta;\omega_2,\sk_2)$. The first observation of this section is the following.
 \begin{lemma}\label{lemma:psi_11}
 The two-point correlation of the phase compensated field is given by
           \begin{equation}\nonumber
     \Psi_{1,1}^\eps(z_1,z_2,\xi,\zeta;\omega_1,\omega_2,\sk_1,\sk_2)=\Psi_{1,1}^\eps(z_1\wedge z_2,z_1\wedge z_2,\xi,\zeta;\omega_1,\omega_2,\sk_1,\sk_2)\,.
 \end{equation}
  \end{lemma}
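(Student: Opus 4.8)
The plan is to exploit the fact that the phase compensation \eqref{eqn:psi_def} was designed precisely to cancel the deterministic drift, so that by \eqref{eqn:phase_comp} each field $z\mapsto\psi^\eps(z,\xi;\omega,\sk)$ is a (complex) martingale with respect to the filtration $\mathcal{F}_z$ generated by the driving noise $\hat B$ up to range $z$. Once this is in place, the identity reduces to the elementary orthogonality of martingale increments to the past. First I would fix notation, writing $\psi_1(z)=\psi^\eps(z,\xi;\omega_1,\sk_1)$ and $\psi_2(z)=\psi^\eps(z,\zeta;\omega_2,\sk_2)$, and assume without loss of generality that $z_1\le z_2$, so that $z_1\wedge z_2=z_1$ (the case $z_2\le z_1$ is identical with the roles of the two fields exchanged). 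I would then record that both $\psi_1,\psi_2$ are square-integrable martingales: equation \eqref{eqn:phase_comp} carries no $\mathrm{d}z$ drift, so each equals its deterministic initial value plus a single It\^o integral against $\hat B$; since conjugation commutes with conditional expectation, $\psi_2^\ast$ is a martingale as well.

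The main step is then a one-line conditioning argument. Because $z_1\le z_2$ and $\psi_2^\ast$ is a martingale, $\E[\psi_2^\ast(z_2)\mid\mathcal{F}_{z_1}]=\psi_2^\ast(z_1)$, while $\psi_1(z_1)$ is $\mathcal{F}_{z_1}$-measurable. The tower property gives
\begin{equation*}
\Psi_{1,1}^\eps(z_1,z_2,\xi,\zeta;\cdots)=\E\big[\psi_1(z_1)\,\psi_2^\ast(z_2)\big]=\E\big[\psi_1(z_1)\,\E[\psi_2^\ast(z_2)\mid\mathcal{F}_{z_1}]\big]=\E\big[\psi_1(z_1)\,\psi_2^\ast(z_1)\big],
\end{equation*}
which is exactly $\Psi_{1,1}^\eps(z_1,z_1,\xi,\zeta;\cdots)=\Psi_{1,1}^\eps(z_1\wedge z_2,z_1\wedge z_2,\xi,\zeta;\cdots)$. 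Equivalently, one may observe that $\partial_{z_2}\Psi_{1,1}^\eps=0$ for $z_2>z_1$, since the increment $\mathrm{d}\psi_2^\ast(z_2)$ is driven by the noise increments $\mathrm{d}\hat B(z_2,\cdot)$, which are independent of $\mathcal{F}_{z_1}$ and hence orthogonal to the $\mathcal{F}_{z_1}$-measurable factor $\psi_1(z_1)$; the correlation is therefore frozen at its value on the diagonal. Note that the differing parameters $(\omega_1,\sk_1)$ and $(\omega_2,\sk_2)$ cause no difficulty, as each field is a martingale individually regardless of the shared noise coupling them.

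The only genuinely technical point, and the one I expect to require care, is justifying that the stochastic integrals in \eqref{eqn:phase_comp} are true martingales rather than merely local martingales, so that the product $\psi_1(z_1)\psi_2^\ast(z_2)$ is integrable and the tower property applies. For fixed $\eps,\eta$ this follows from uniform-in-$z$ second-moment control over compact ranges: an It\^o computation shows that $\E\|u^\eps(z)\|_{\sL^2}^2$ is conserved, because the It\^o correction $\tfrac{\omega^2 R(0)}{4\eta^2}|u^\eps|^2\,\mathrm{d}z$ produced by the noise term in \eqref{eqn:IS} exactly cancels the dissipative term $-\tfrac{\omega^2 R(0)}{8\eta^2}u^\eps\,\mathrm{d}z$ (counted for $u^\eps$ and its conjugate), while the Laplacian terms integrate to zero by parts. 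Combined with the deterministic factor $e^{\omega^2 R(0)z/(8\eta^2)}$ relating $\psi^\eps$ to $\hat u^\eps$ through \eqref{eqn:psi_def}, this yields the required bounds; higher moments, needed for the analyses in later sections, are controlled analogously. I expect this integrability bookkeeping, rather than the martingale orthogonality itself, to be the substantive part of the argument.
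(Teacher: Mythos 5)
Your proof is correct and rests on the same mechanism as the paper's: the phase-compensated equation \eqref{eqn:phase_comp} has no drift, so the correlation can only accumulate on the common interval $[0,z_1\wedge z_2]$. The paper phrases this by writing the It\^o product formula explicitly and matching the resulting integral (which stops at $z_1\wedge z_2$) to the equal-$z$ evolution equation --- precisely the ``equivalently'' variant you mention --- so your tower-property packaging and the accompanying integrability remarks amount to the same argument.
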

\begin{proof}
      We have
 \begin{equation}\nonumber
 \begin{aligned}
     &\Psi_{1,1}^\eps(z_1,z_2,\xi,\zeta;\omega_1,\omega_2,\sk_1,\sk_2)= \Psi_{1,1}^\eps(0,0,\xi,\zeta;\omega_1,\omega_2,\sk_1,\sk_2)\\
     &+\frac{\omega_1\omega_2}{4\eta^2}\int_0^{z_1\wedge z_2}\int_{\mathbb{R}^d}\Psi_{1,1}^\eps(s,s,\xi,\zeta;\omega_1,\omega_2,\sk_1,\sk_2)e^{\frac{i\eta s}{2\eps}\big(\frac{g(\xi,k)}{\omega_1}-\frac{g(\zeta,k)}{\omega_2}\big)}\frac{\hat{R}(k)\mathrm{d}k\mathrm{d}s}{(2\pi)^d}.
 \end{aligned}
     \end{equation}
     However, note that the second-order moment of the phase compensated field at $z_1=z_2=z$ solves 
     \begin{equation}\nonumber
         \partial_z\Psi_{1,1}^\eps(z,z,\xi,\zeta;\omega_1,\omega_2,\sk_1,\sk_2)=\frac{\omega_1\omega_2}{4\eta^2}\int_{\mathbb{R}^d}\Psi_{1,1}^\eps(z,z,\xi,\zeta;\omega_1,\omega_2,\sk_1,\sk_2)e^{\frac{i\eta z}{2\eps}\big(\frac{g(\xi,k)}{\omega_1}-\frac{g(\zeta,k)}{\omega_2}\big)}\frac{\hat{R}(k)\mathrm{d}k}{(2\pi)^d}\,.
     \end{equation}
     This gives $\Psi_{1,1}^\eps(z_1,z_2,\xi,\zeta;\omega_1,\omega_2,\sk_1,\sk_2)=\Psi_{1,1}^\eps(z_1\wedge z_2,z_1\wedge z_2,\xi,\zeta;\omega_1,\omega_2,\sk_1,\sk_2)$. 
\end{proof}
Now, we do a phase recompensation and inverse Fourier transform back to the physical variables. In the physical domain, we define the two-point correlation of the macroscopic wavefield
\begin{equation}\label{eqn:m_11_def}
    m^\eps_{1,1}(z_1,z_2,r,\tau;\omega_1,\omega_2,\sk_1,\sk_2)=\mu^\eps_{1,1}(z_1,z_2,\eps^{-1}r+\eta\tau/2,\eps^{-1}r-\eta\tau/2;\omega_1,\omega_2,\sk_1,\sk_2)\,,
\end{equation}
where $z_j=z_0+\eps\eta h_j$, $\omega_j=\omega_0+\eps\eta\Omega_j, \sk_j=\sk_0+\eps\kappa_j$. Also, let $\Gamma(r,\tau):=u_0(r-\tau/2)u_0^\ast(r+\tau/2)$. Let $\|\cdot\|$ denote the total variation (TV) norm and $\|\cdot\|_p$ denote the standard $\sL^p$ norm. We have the following.
\begin{proposition}\label{prop:m_11_expn}
The two-point correlation $m^\eps_{1,1}$ is given by
     \begin{equation}\nonumber
        \begin{aligned}
    m_{1,1}^\eps(z_1,z_2,r,\tau;\cdot)&=\int\limits_{\mathbb{R}^{2d}}\hat{{M}}^\eps_{1,1}(z_1\wedge z_2,\zeta,\xi;\cdot)e^{-\frac{ih|\xi|^2}{2\omega_0}}e^{i(r\cdot\zeta+\tau\cdot \xi)}\frac{\mathrm{d}\xi\mathrm{d}\zeta}{(2\pi)^{2d}} + E_1^\eps\,,
        \end{aligned}
    \end{equation}  
where ${M}^\eps_{1,1}$ solves
    \begin{equation}\label{eqn:M_11_eps}
        \begin{aligned}
               \partial_z{M}^\eps_{1,1}&=\frac{i\Omega}{2\omega_0^2}\Delta_\tau{M}^\eps_{1,1}+\frac{i}{\omega_0}\partial_r\cdot\partial_\tau {M}^\eps_{1,1}+\frac{\omega_0^2}{4\eta^2}[R(\eta\tau)-R(0)]{M}^\eps_{1,1}\\
                {M}^\eps_{1,1}(0,\cdot)&=\Gamma(r,\eps\eta\tau)e^{ir\cdot\kappa}e^{i\eta\tau\cdot\Bar{\sk}}\,,
        \end{aligned}
        \end{equation}
     $h=h_1-h_2$,  $\Omega=\Omega_2-\Omega_1$, $\kappa=\kappa_1-\kappa_2, \Bar{\sk}=\frac{\sk_1+\sk_2}{2}$ and $\sup_{0\le s\le z}\|E^\eps_{1}(s,\cdot)\|_\infty\le C\eps^\alpha$, $\alpha\in(0,1)$\,.
\end{proposition}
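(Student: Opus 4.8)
The plan is to pass to the phase-compensated (partial-Fourier) variables, use Lemma~\ref{lemma:psi_11} to collapse the two propagation distances to their minimum $z_*:=z_1\wedge z_2$, and then invert the phase compensation to return to the physical correlation, thereby isolating a single free-propagation phase that becomes the factor $e^{-ih|\xi|^2/(2\omega_0)}$. Concretely, I first undo \eqref{eqn:psi_def}, writing $\hat u^\eps=\psi^\eps e^{-i\eta z|\xi|^2/(2\eps\omega)}e^{-\omega^2R(0)z/(8\eta^2)}$, so that $\E\hat u^\eps(z_1,\xi')\hat u^{\eps*}(z_2,\zeta')$ equals $\Psi^\eps_{1,1}(z_1,z_2,\xi',\zeta')$ times the two compensating phases and damping factors. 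Lemma~\ref{lemma:psi_11} replaces $\Psi^\eps_{1,1}(z_1,z_2,\cdot)$ by $\Psi^\eps_{1,1}(z_*,z_*,\cdot)$, which I re-express through the equal-distance correlation $\E\hat u^\eps(z_*,\xi')\hat u^{\eps*}(z_*,\zeta')$. The net result is that $\E\hat u^\eps(z_1,\xi')\hat u^{\eps*}(z_2,\zeta')$ equals the equal-distance correlation at $z_*$ times a leftover phase $e^{-i\eta(z_1-z_*)|\xi'|^2/(2\eps\omega_1)}e^{i\eta(z_2-z_*)|\zeta'|^2/(2\eps\omega_2)}$ and a residual damping of order $\exp(O(\eps/\eta))$. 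Taking the inverse transform in $(\xi',\zeta')$, substituting $x=\eps^{-1}r+\eta\tau/2$, $y=\eps^{-1}r-\eta\tau/2$ as in \eqref{eqn:m_11_def}, and passing to the macroscopic dual variables $\zeta=\eps^{-1}(\xi'-\zeta')$ (dual to $r$) and $\xi=\tfrac\eta2(\xi'+\zeta')$ (dual to $\tau$) yields the stated representation, with $\hat M^\eps_{1,1}(z_*,\zeta,\xi)$ the double transform of the equal-distance correlation. Since only one of $z_1-z_*,z_2-z_*$ is nonzero and equals $\eps\eta|h|$, and since $\xi'=\eta^{-1}\xi+\tfrac\eps2\zeta$, the leftover phase expands as $-\tfrac{i\eta^2h}{2\omega_1}|\xi'|^2=-\tfrac{ih|\xi|^2}{2\omega_0}+O(\eps\eta)(|\xi|^2+|\xi\cdot\zeta|+|\zeta|^2)$, producing the leading factor $e^{-ih|\xi|^2/(2\omega_0)}$ plus correction terms I record as part of $E_1^\eps$.

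Next I would identify $\hat M^\eps_{1,1}$ with the transform of the solution of \eqref{eqn:M_11_eps}. Applying It\^o's formula to $u^\eps(z,x;\omega_1)u^{\eps*}(z,y;\omega_2)$ and using $(\mathrm{d}B(z,x))(\mathrm{d}B(z,y))=R(x-y)\,\mathrm{d}z$ gives the closed equal-distance equation $\partial_z\mu^\eps_{1,1}=\tfrac{i\eta}{2\eps\omega_1}\Delta_x\mu^\eps_{1,1}-\tfrac{i\eta}{2\eps\omega_2}\Delta_y\mu^\eps_{1,1}-\tfrac{(\omega_1^2+\omega_2^2)R(0)}{8\eta^2}\mu^\eps_{1,1}+\tfrac{\omega_1\omega_2}{4\eta^2}R(x-y)\mu^\eps_{1,1}$. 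Writing $\omega_j=\omega_0+\eps\eta\Omega_j$, changing to $(r,\tau)$ and collecting powers of $\eps,\eta$, the mixed term reproduces $\tfrac{i}{\omega_0}\partial_r\cdot\partial_\tau$, the $\Delta_\tau$ term reproduces $\tfrac{i\Omega}{2\omega_0^2}\Delta_\tau$, and $\tfrac{\omega_1\omega_2}{4\eta^2}R(\eta\tau)-\tfrac{\omega_1^2+\omega_2^2}{8\eta^2}R(0)$ reproduces $\tfrac{\omega_0^2}{4\eta^2}[R(\eta\tau)-R(0)]$; the discarded pieces are an $O(\eps^2\eta^2)\Delta_r$ term and $O(\eps/\eta)$ corrections to the potential and coefficients. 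The same change of variables turns $u_0(\eps x)u_0^*(\eps y)e^{i(\sk_1\cdot x-\sk_2\cdot y)}$ into $\Gamma(r,\eps\eta\tau)e^{ir\cdot\kappa}e^{i\eta\tau\cdot\bar\sk}$, matching the boundary data of \eqref{eqn:M_11_eps}. I then \emph{define} $M^\eps_{1,1}$ to be the solution of the clean equation \eqref{eqn:M_11_eps} and $\hat M^\eps_{1,1}$ its $(r,\tau)$-transform.

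The substance is the error bound, which I would carry out entirely through $\hat M^\eps_{1,1}$, using $\|f\|_\infty\le(2\pi)^{-2d}\|\hat f\|_{\sL^1}$ to reach the sup norm in $E_1^\eps$. The crucial uniform estimate is that, in Fourier variables, the dispersive and advective terms of \eqref{eqn:M_11_eps} are unit-modulus multipliers while the potential acts as convolution with $\eta^{-d}\hat R(\cdot/\eta)$ minus $\tfrac{\omega_0^2R(0)}{4\eta^2}$; since $\tfrac{1}{(2\pi)^d}\|\hat R\|_{\sL^1}=R(0)$, the convolution's $\sL^1$-operator norm exactly cancels the damping, so $\|\hat M^\eps_{1,1}(z)\|_{\sL^1}$ is non-increasing and bounded uniformly in $\eps$ by $\|\hat M^\eps_{1,1}(0)\|_{\sL^1}$, which is finite and $\eps$-uniform because $u_0\in\mathcal S(\Rm^d)$. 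With this contraction in hand, the PDE-approximation error is handled by Duhamel: the $\Delta_r$ remainder contributes at order $\eps^2\eta^2\!\int_0^z|\zeta|^2|\hat M^\eps_{1,1}|$ and the potential remainder at order $\eps/\eta$, both $O(\eps^\alpha)$ once the corresponding weighted $\sL^1$ bounds hold. The phase remainder is controlled with $|e^{i\theta}-1|\le 2^{1-s}|\theta|^s$: the corrections to $-ih|\xi|^2/(2\omega_0)$ are $O(\eps\eta)$ multiples of $|\xi|^2$, $\xi\cdot\zeta$ and $|\zeta|^2$, so fixing $s\in(0,1)$ small enough that the $s$-th weighted moments of $\hat M^\eps_{1,1}$ converge gives error $O((\eps\eta)^s)=O(\eps^\alpha)$, which is precisely the source of the exponent $\alpha\in(0,1)$.

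I expect the main obstacle to be exactly these uniform weighted-norm estimates on $\hat M^\eps_{1,1}$. The bare $\sL^1$ norm is protected by the exact cancellation above, but the phase and $\Delta_r$ remainders require $|\xi|^{2s}$- and $|\zeta|^{2s}$-weighted $\sL^1$ bounds that must survive the singular $\eta^{-2}$ potential. Here one uses that the convolution kernel $\eta^{-d}\hat R(\cdot/\eta)$ is concentrated at scale $\eta$, so it spreads $\xi$-moments only by $O(\eta^{2s})$, together with the assumed rapid decay of $\hat R$ (radial domination and $\langle k\rangle^{d-2}\hat R\in\sL^\infty$), to propagate the weighted bounds with at most $O(1)$ growth in $z$. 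This is the step, already partially present in \cite{garnier2023speckle}, where the medium hypotheses and the scale separation $\eta=(\log|\log\eps|)^{-1}$ are genuinely used to convert the $O(\eps/\eta)$ and $O(\eps\eta)^s$ remainders into the clean $\sup_{0\le s\le z}\|E^\eps_1(s,\cdot)\|_\infty\le C\eps^\alpha$ bound.
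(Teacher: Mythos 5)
Your proposal follows the paper's route for the structural part of the argument: phase compensation, Lemma~\ref{lemma:psi_11} to collapse $(z_1,z_2)$ to $z_1\wedge z_2$, phase recompensation, the macroscopic change of variables $(\xi',\zeta')\mapsto(\zeta,\xi)$, the identification of the equal-distance PDE and its reduction to \eqref{eqn:M_11_eps}, and the expansion $-\tfrac{i\eta^2h}{2\omega_1}|\xi'|^2=-\tfrac{ih|\xi|^2}{2\omega_0}+O(\eps\eta)(|\xi|^2+|\xi||\zeta|+\eps\eta|\zeta|^2)$ all match the paper's computation, including the bookkeeping of which terms ($\eps^2\Delta_r$, $\eps^2\Omega^2R(0)/8$, the $\omega_{1,2}\to\omega_0$ replacements, the residual damping $e^{-\eps R(0)\omega_1^2h/(8\eta)}$) are discarded into $E^\eps_1$. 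Where you genuinely diverge is the control of the weighted Fourier norms needed to absorb the phase and PDE remainders: the paper accepts the crude Gr\"onwall bound $\|\langle\xi\rangle^p\hat{\breve M}^\eps_{1,1}\|(z)\le\|\langle\xi\rangle^p\hat{\breve M}^\eps_{1,1}\|(0)e^{c(p)/\eta^2}$ and then kills the $e^{c/\eta^2}$ growth with the scale separation $\eta=(\log|\log\eps|)^{-1}$, so that $\eps\, e^{c/\eta^2}\le\eps^\alpha$; you instead claim the weighted moments propagate with only $O(1)$ growth in $z$, uniformly in $\eta$. That sharper claim is true, and the paper itself proves it later (Lemma~\ref{lemma:M_eps_11_reg}, used for tightness) -- but your stated justification is incomplete as written. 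The argument that the convolution kernel $\eta^{-d}\hat R(\cdot/\eta)$ ``spreads $\xi$-moments only by $O(\eta^{2s})$'' does not survive a naive triangle-inequality/TV estimate: bounding $\langle\xi\rangle^{2s}\le 2^s\langle\xi-\eta k\rangle^{2s}\langle\eta k\rangle^{2s}$ reintroduces a rate $\tfrac{\omega_0^2}{4\eta^2}(2^s-1)R(0)=O(\eta^{-2})$ and you are back to $e^{c/\eta^2}$. To get the uniform bound you must exploit that the jumps are mean zero (symmetry of $\hat R$) together with the Feynman--Kac comparison $|\hat M^\eps_{1,1}(z)|\le\hat N^\eps_{1,1}(z)$ with $\hat N$ solving the pure-jump equation, whose second $\xi$-moment grows only linearly in $z$ because $\partial_\tau R(0)=0$ cancels the $\eta^{-2}$. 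Either supply that argument (as in Lemma~\ref{lemma:M_eps_11_reg}) or fall back on the paper's $e^{c/\eta^2}$-plus-scale-separation route; with that one repair the proof is complete.
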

\begin{proof}
Note that the phase-compensated moment $\Psi^\eps_{1,1}$ is related to $\hat{\mu}^\eps_{1,1}$, the Fourier transform of $\mu^\eps_{1,1}$ as
\begin{equation}\nonumber
    \hat{\mu}^\eps_{1,1}(z_1,z_2,\xi,\zeta;\omega_1,\omega_2,\sk_1,\sk_2)=\Psi^\eps_{1,1}(z_1,z_2,\xi,\zeta;\omega_1,\omega_2,\sk_1,\sk_2)e^{-\frac{i\eta}{2\eps}\big(\frac{z_1}{\omega_1}|\xi|^2-\frac{z_2}{\omega_2}|\zeta|^2\big)}e^{-\frac{R(0)}{8\eta^2}(\omega_1^2z_1+\omega_2^2z_2)}\,.
\end{equation}
       Phase recompensating and inverse Fourier transforming $\Psi^\eps_{1,1}$ then gives
 \begin{equation}\nonumber
 \begin{aligned}
     \mu^\eps_{1,1}(z_1,z_2,x,y;\omega,\sk)&=e^{-\frac{R(0)}{8\eta^2}(\omega_1^2 (z_1-z_1\wedge z_2)+\omega_2^2(z_2-z_1\wedge z_2))}\\
     &\times\int_{\mathbb{R}^{2d}}\hat\mu_{1,1}^\eps(z_1\wedge z_2,z_1\wedge z_2,\xi,\zeta;\omega,\sk)e^{-\frac{i\eta}{2\eps}\big(\frac{(z_1-z_1\wedge z_2)|\xi|^2}{\omega_1}-\frac{(z_2-z_1\wedge z_2)|\zeta|^2}{\omega_2}\big)}e^{i(\xi\cdot x-\zeta\cdot y)}\frac{\mathrm{d}\xi\mathrm{d}\zeta}{(2\pi)^{2d}}\,.
 \end{aligned}
     \end{equation}
Since the macroscopic wavefield is evaluated at $(x,y)=\big(\frac{r}{\eps}+ \frac{\eta\tau}{2},\frac{r}{\eps}- \frac{\eta\tau}{2}\big)$, making the change of variables $(\xi,\zeta)=\big(\frac{\xi}{\eta}+\frac{\eps\zeta}{2},\frac{\xi}{\eta}-\frac{\eps\zeta}{2}\big)$ gives
     \begin{equation}\nonumber
        \begin{aligned}
          &  m_{1,1}^\eps(z_1,z_2,r,\tau;\cdot)=\mu^\eps_{1,1}\big(z_1,z_2,\frac{r}{\eps}+\frac{\eta\tau}{2},\frac{r}{\eps}-\frac{\eta\tau}{2};\cdot\big)=e^{-\frac{R(0)}{8\eta^2}(\omega_1^2 (z_1-z_1\wedge z_2)+\omega_2^2(z_2-z_1\wedge z_2))}\\
    &\int\limits_{\mathbb{R}^{2d}}\hat{\tilde{{M}}}^\eps_{1,1}(z_1\wedge z_2,\zeta,\xi;\cdot)e^{-\frac{i\eta}{2\eps}\big(\frac{(z_1-z_1\wedge z_2)|\xi/\eta-\eps\zeta/2|^2}{\omega_1}-\frac{(z_2-z_1\wedge z_2)|\xi/\eta+\eps\zeta/2|^2}{\omega_2}\big)}e^{i(r\cdot\zeta+\tau\cdot \xi)}\frac{\mathrm{d}\xi\mathrm{d}\zeta}{(2\pi)^{2d}}\,.
        \end{aligned}
    \end{equation}   
        Here we defined $\hat{\tilde{{M}}}^\eps_{1,1}(z,\zeta,\xi;\cdot)=(\frac{\eps}{\eta})^d\hat{\mu}^\eps_{1,1}\big(z,z,\frac{\xi}{\eta}+\frac{\eps\zeta}{2},\frac{\xi}{\eta}-\frac{\eps\zeta}{2};\cdot\big)$.
   This is the solution to
   \begin{equation}\label{eqn:M_11_tilde}
        \begin{aligned}
      \partial_z\tilde{{M}}^\eps_{1,1}&=\frac{i\eta^2\Omega}{8\omega_1\omega_2}(\eps^2\Delta_r+4\eta^{-2}\Delta_\tau)\tilde{{M}}^\eps_{1,1}+\frac{i\Bar{\omega}}{\omega_1\omega_2}\partial_r\cdot\partial_\tau \tilde{{M}}^\eps_{1,1}+\frac{\omega_1\omega_2}{4\eta^2}[R(\eta\tau)-R(0)]\tilde{{M}}^\eps_{1,1}-\frac{\eps^2\Omega^2R(0)}{8}\tilde{{M}}^\eps_{1,1}\\
      \tilde{{M}}^\eps_{1,1}(0,\cdot)&=\Gamma(r,\eps\eta\tau)e^{i\eps^{-1}r\cdot(\sk_1-\sk_2)}e^{i\eta\tau\cdot(\sk_1+\sk_2)/2}\,,
        \end{aligned}
    \end{equation}
 where $\bar{\omega}=\frac{\omega_1+\omega_2}{2}$. We observe that upon Fourier transforming $\tilde{{M}}^\eps_{1,1}$,
       \begin{equation}\nonumber
           \hat{\tilde{{M}}}^\eps_{1,1}(z,\zeta,\xi;\cdot)e^{\frac{i\Omega\eta^2\eps^2|\zeta|^2z}{8\omega_1\omega_2}}e^{\frac{\eps^2\Omega^2R(0)z}{8}}=\hat{\breve{M}}^\eps_{1,1}(z,\zeta,\xi;\cdot)\,,
       \end{equation}
       where $\breve{M}^\eps_{1,1}$ solves
       \begin{equation}\label{eqn:M_11_breve}
        \begin{aligned}
      \partial_z\breve{M}^\eps_{1,1}&=\frac{i\Omega}{2\omega_1\omega_2}\Delta_\tau\breve{M}^\eps_{1,1}+\frac{i\Bar{\omega}}{\omega_1\omega_2}\partial_r\cdot\partial_\tau \breve{M}^\eps_{1,1}+\frac{\omega_1\omega_2}{4\eta^2}[R(\eta\tau)-R(0)]\breve{M}^\eps_{1,1}\\
      \breve{M}^\eps_{1,1}(0,\cdot)&=\Gamma(r,\eps\eta\tau)e^{i\eps^{-1}r\cdot(\sk_1-\sk_2)}e^{i\eta\tau\cdot(\sk_1+\sk_2)/2}\,. 
        \end{aligned}
    \end{equation}
       This gives $\|\hat{\tilde{{M}}}^\eps_{1,1}-\hat{\breve{M}}^\eps_{1,1}\|\le c\langle\Omega\rangle\eps^2z\|(1+\eta^2|\zeta|^2\hat{\breve{M}}^\eps_{1,1})\|$. For any $p\in\mathbb{Z}^+$, the TV norm $\|(1+|\zeta|^p)\hat{\breve{M}}^\eps_{1,1}(z)\|$ is bounded by $\|(1+|\zeta|^p)\breve{{M}}^\eps_{1,1}(0)\|$ which is independent of $\eps$. Now without loss of generality let $z_1\ge z_2$. Then we have up to $\mathcal{O}(\eps^2)$ in the uniform sense, 
\begin{equation}\nonumber
    \begin{aligned}
     m_{1,1}^\eps(z_1,z_2,r,\tau;\cdot)  &=e^{-\frac{\eps R(0)\omega_1^2h}{8\eta}}\int\limits_{\mathbb{R}^{2d}}\hat{\breve{M}}^\eps_{1,1}(z_2,\zeta,\xi;\cdot)e^{-\frac{i\eta^2h|\xi/\eta-\eps\zeta/2|^2}{2\omega_1}}e^{i(r\cdot\zeta+\tau\cdot \xi)}\frac{\mathrm{d}\xi\mathrm{d}\zeta}{(2\pi)^{2d}}\,. 
    \end{aligned}
\end{equation}
We approximate the exponentials $ e^{-\frac{\eps R(0)\omega_1^2h}{8\eta}}e^{-\frac{i\eta^2h|\xi/\eta-\eps\zeta/2|^2}{2\omega_1}}$ by $e^{\frac{-ih|\xi|^2}{2\omega_1}}$ as
\begin{equation}\nonumber
 |e^{-\frac{ih|\xi|^2}{2\omega_1}}( e^{-\frac{\eps R(0)\omega_1^2h}{8\eta}}e^{-\frac{ih}{2\omega_1}(\eps^2\eta^2|\zeta|^2/4-\eps\eta\xi\cdot\zeta)}-1)|\le C\big[\frac{\eps  |h|}{\eta}+|h|(\eps^2\eta^2|\zeta|^2+\eps\eta|\xi||\zeta|)\big]\,.
\end{equation}
    It can be shown from~\eqref{eqn:M_11_breve} that for any $p\in\mathbb{Z}^+$, 
    \begin{equation}\nonumber
        \|\langle\xi\rangle^p\hat{\breve M}^\eps_{1,1}\|(z)\le \|\langle\xi\rangle^p\hat{\breve M}^\eps_{1,1}\|(0)e^{\frac{\omega_1\omega_2z}{4\eta^2}\int_{\mathbb{R}^d}(2^p\langle\eta k\rangle^p\hat{R}(k)-\hat{R}(k))\mathrm{d}k}\le \|\langle\xi\rangle^p\hat{\breve M}^\eps_{1,1}\|(0)e^{\frac{c(p)}{\eta^2}}\,.
    \end{equation}
    This bound increases exponentially in $\eta^{-2}$. However, multiplication by a factor of $\eps$ makes the contribution from this term smaller than $\eps^\alpha$ for any $0<\alpha<1$. $\omega_{1,2}$ can finally be replaced by $\omega_0$ up to an $\mathcal{O}(\eps^\alpha)$ term in a similar manner to give Eq.~\eqref{eqn:M_11_eps}.
    \end{proof}

The second-order moment PDE~\eqref{eqn:M_11_eps} does not have an analytical solution in general, but in the diffusive regime, it is asymptotically given by a diffusion equation as is shown next.
  
\subsection{Frequency correlations}
        \begin{lemma}\label{lemma:M_11}
        For $z>0$ and in the diffusive limit, the solution $M^\eps_{1,1}$ to the PDE described by~\eqref{eqn:M_11_eps} is given by $\lim_{\eps\to0}M^\eps_{1,1}(z,r,\tau)=M_{1,1}(z,r,\tau)$ where $M_{1,1}$ solves \eqref{eqn:M_11}.
        \begin{proof}
        Fourier transforming $(r,\tau)\to (\zeta,\xi)$ gives
\begin{equation}\nonumber
        \begin{aligned}
               \partial_z\hat{M}^\eps_{1,1}&=-iV(\xi)\hat{M}^\eps_{1,1}+\mathcal{L}^\eta\hat{M}^\eps_{1,1}\\
                \hat{M}^\eps_{1,1}(0,\cdot)&=(\eps\eta)^{-d}\hat{\Gamma}(\zeta-\kappa,(\eps\eta)^{-1}(\xi-\eta\Bar{\sk}))\,,
        \end{aligned}
        \end{equation}
        where (after suppressing the $\zeta$ dependence)
        \begin{equation}\nonumber
        \begin{aligned}
                     V(\xi)&=\frac{\Omega|\xi|^2}{2\omega_0^2}+\frac{(\xi\cdot\zeta)}{\omega_0},\quad [\mathcal{L}^\eta\psi](\xi)=\frac{\omega_0^2}{4\eta^2}\int_{\mathbb{R}^d}[\psi(\xi-\eta k)-\psi(\xi)]\frac{\hat{R}(k)\mathrm{d}k}{(2\pi)^d}\,.
        \end{aligned}
        \end{equation}
When $\Omega=0$, the above equation admits an explicit solution, which in the physical domain is given by \cite{bal2024complex,garnier2014scintillation}:
\begin{equation}\label{eqn:M_11_Omega=0}
\begin{aligned}
    M^\eps_{1,1}(z,r,\tau;\omega_0,\Omega=0,\cdot)=&\int_{\mathbb{R}^{2d}}u_0\big(r'+\eps\eta\tau/2+\eps\eta\xi z/2\omega_0\big)u^\ast_0\big(r'-\eps\eta\tau/2-\eps\eta\xi z/2\omega_0\big)e^{i\xi\cdot(r-r')}e^{ir'\cdot\kappa}\\
            &\times\exp\Big(\frac{\omega_0^2 z}{4\eta^2}\int_0^1Q\big(\eta\tau+\frac{\eta s\xi z}{\omega_0}\big)\mathrm{d}s\Big)\frac{\mathrm{d}\xi\mathrm{d}r'}{(2\pi)^d}, \quad Q(x)=R(x)-R(0)\,.  
\end{aligned}
  \end{equation}
This is no longer the case in general when $\Omega\neq 0$ and we need to use the structure of $\mathcal{L}^\eta$ as the generator of a jump process, which is also utilized in~\cite{garnier2023speckle}.

            Let $\rho_0(\xi)\in C_b(\mathbb{R}^d)$, the space of continuous and bounded functions on $\mathbb{R}^d$. Define $\rho^\eps$ to be the solution to the adjoint equation
            \begin{equation}\nonumber
                \partial_z\rho^\eps=-iV(\xi)\rho^\eps-\mathcal{L}^\eta\rho^\eps,\quad \rho^\eps(Z,\xi)=\rho_0(\xi)\,.
            \end{equation}
            Let $\langle\cdot,\cdot\rangle$ denote the standard inner product w.r.t $\xi$. By the construction of $\rho^\eps$ and the assumption that $\hat{R}$ is real valued and symmetric, $\partial_z\langle\hat{M}^\eps_{1,1}(z),\rho^\eps(z)\rangle=0$ which gives
            \begin{equation}\nonumber
                \langle\hat{M}^\eps_{1,1}(Z),\rho_0\rangle=\langle \hat{M}^\eps_{1,1}(0),\rho^\eps(0)\rangle\,.
            \end{equation}
        It can be shown that 
            \begin{equation}\label{eqn:M_eps_11_conv}
                \lim_{\eps\to 0} \langle\hat{M}^\eps_{1,1}(Z),\rho_0\rangle=\lim_{\eps\to 0}\langle \hat{M}^\eps_{1,1}(0),\rho^\eps(0)\rangle=\langle \hat{\Gamma}(\zeta-\kappa,\xi),\lim_{\eps\to 0}\rho^\eps(0,0)\rangle=\langle \hat{M}_{1,1}(Z),\rho_0\rangle\,.
            \end{equation}
             The second equality involves approximating $\hat{M}^\eps_{1,1}(0,\cdot,\xi)$ by a Dirac distribution in $\xi$. From standard regularity theory, the solution $\rho^\eps$ stays bounded and continuous. Assuming that the initial condition $\hat{\Gamma}$ is smooth, from Lebesgue's dominated convergence theorem, 
             \begin{equation}\nonumber
\lim_{\eps\to 0}\langle \hat{M}^\eps_{1,1}(0),\rho^\eps(0)\rangle=\lim_{\eps\to 0}\int_{\mathbb{R}^d}\hat{\Gamma}(\zeta-\kappa,\xi)\rho^{\eps\ast}(0,\eps\eta\xi+\eta\bar{\sk})\mathrm{d}\xi=\int_{\mathbb{R}^d}\hat{\Gamma}(\zeta-\kappa,\xi)\lim_{\eps\to 0}\rho^{\eps\ast}(0,0)\mathrm{d}\xi\,.
             \end{equation}
            As discussed in~\cite{garnier2023speckle}, the third equality in~\eqref{eqn:M_eps_11_conv} uses the fact that $\mathcal{L}^\eta$ is the infinitesimal generator of the random process $\chi^\eta(z)=\eta\chi(z/\eta^2)$ (where $\chi(z)$ is a compound Poisson process), and this converges to a Brownian motion $W$ with generator $\mathcal{L}=\frac{\omega_0^2}{8}\partial_\xi\cdot(\Sigma\partial_\xi)$ as $\eta\to 0$. This allows us to capture the limiting behavior of $\rho^\eps$ by writing an explicit expression using the Feynman-Kac representation:
            \begin{equation}\nonumber
                \rho^\eps(z,\xi)=\mathbb{E}[\rho_0(\chi^\eta(Z))e^{i\int_z^ZV(\chi^\eta(s))\mathrm{d}s}|\chi^\eta(z)=\xi]\xrightarrow{\eta\to 0}\mathbb{E}[\rho_0(W(Z))e^{i\int_z^ZV(W(s))\mathrm{d}s}|W(z)=\xi]=\rho(z,\xi)\,,
            \end{equation}
            where $\rho$ solves the backward diffusion equation
            \begin{equation}\nonumber
                 \partial_z\rho=-iV(\xi)\rho-\mathcal{L}\rho,\quad \rho(Z,\xi)=\rho_0(\xi)\,.
            \end{equation}
           Again, from the construction of $\hat{M}_{1,1}$ and $\rho$, we have
           \begin{equation}\nonumber
     \lim_{\eps\to 0}\langle \hat{\Gamma}(\zeta-\kappa,\xi),\rho^\eps(0,0)\rangle    = \langle \hat{\Gamma}(\zeta-\kappa,\xi),\rho(0,0)\rangle= \langle \hat{M}_{1,1}(Z),\rho_0\rangle \,.
           \end{equation}
            This shows that $\hat{M}^\eps_{1,1}(z,\zeta,\xi)$ converges as a bounded measure on $\mathbb{R}^{2d}$ to $\hat{M}_{1,1}(z,\zeta,\xi)$ as $\eps\to 0$ for $z>0$. Finally inverse Fourier transforming gives the statement in the Lemma.
        \end{proof}
    \end{lemma}
\subsection{Two-point correlation function in diffusive regime}
Summarizing the above derivations, we have the following corollary to Proposition~\ref{prop:m_11_expn} and Lemma~\ref{lemma:M_11}.
\begin{corollary}\label{coro:m_11_eps_limit}
    Under the diffusive regime and scaling $z_j=z_0+\eps\eta h_j$, $\omega_j=\omega_0+\eps\eta\Omega_j$ and $\sk_j=\eps\sk_0+\eps \kappa_j$, the two-point correlation $m^\eps_{1,1}$ described by~\eqref{eqn:m_11_def} is asymptotically given by $m_{1,1}(z_0,r,h,\tau;\omega_0,\Omega,\kappa)=\lim_{\eps\to 0} m^\eps_{1,1}(z_1,z_2,r,\tau;\omega_1,\omega_2,\sk_1,\sk_2)$ given in \eqref{eqn:m_11}. 
\end{corollary}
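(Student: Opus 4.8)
The plan is to combine Proposition~\ref{prop:m_11_expn} with Lemma~\ref{lemma:M_11} and pass to the limit $\eps\to0$ in the representation of $m^\eps_{1,1}$. I would first dispose of the already-controlled pieces. The remainder obeys $\sup_{0\le s\le z}\|E^\eps_1(s,\cdot)\|_\infty\le C\eps^\alpha\to0$ and so drops out uniformly. Since $z_j=z_0+\eps\eta h_j$, the evaluation time satisfies $z_1\wedge z_2=z_0+\eps\eta\min(h_1,h_2)\to z_0$, so by well-posedness and $z$-continuity of the limiting equation~\eqref{eqn:M_11} it suffices to evaluate the limit at $z_0$. Finally, the $\eps$-dependent initial datum $\Gamma(r,\eps\eta\tau)e^{ir\cdot\kappa}e^{i\eta\tau\cdot\bar\sk}$ of~\eqref{eqn:M_11_eps} converges to $|u_0(r)|^2e^{ir\cdot\kappa}$: indeed $\Gamma(r,\eps\eta\tau)=u_0(r-\eps\eta\tau/2)u_0^\ast(r+\eps\eta\tau/2)\to|u_0(r)|^2$, while under the stated scaling $\bar\sk=\tfrac12(\sk_1+\sk_2)=O(\eps)$ so that $e^{i\eta\tau\cdot\bar\sk}\to1$; this is precisely the regime in which Lemma~\ref{lemma:M_11} was established.

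With these reductions in hand, I would invoke Lemma~\ref{lemma:M_11}. For each fixed $\zeta$, its proof shows that $\hat M^\eps_{1,1}(z_1\wedge z_2,\zeta,\cdot)$ converges, tested against bounded continuous functions of $\xi$, to $\hat M_{1,1}(z_0,\zeta,\cdot)$. Since the Fresnel phase $\xi\mapsto e^{-ih|\xi|^2/2\omega_0}e^{i\tau\cdot\xi}$ is bounded and continuous, this convergence applies directly to the inner $\xi$-integral in Proposition~\ref{prop:m_11_expn}; integrating the result against $e^{ir\cdot\zeta}$ over $\zeta$, dominated by the uniform weighted bound $\|\langle\zeta\rangle^p\hat M^\eps_{1,1}\|\le C(p)$ recorded in the proof of the Proposition, yields
\begin{equation}\nonumber
\lim_{\eps\to0}m^\eps_{1,1}(z_1,z_2,r,\tau;\cdot)=\int_{\mathbb{R}^{2d}}\hat M_{1,1}(z_0,\zeta,\xi)\,e^{-\frac{ih|\xi|^2}{2\omega_0}}e^{i(r\cdot\zeta+\tau\cdot\xi)}\frac{\mathrm{d}\xi\,\mathrm{d}\zeta}{(2\pi)^{2d}}.
\end{equation}

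It remains to recognize the right-hand side as the claimed Fresnel convolution. The factor $e^{-ih|\xi|^2/2\omega_0}$ is a Fourier multiplier in $\xi$ alone, and the standard (analytically continued) Gaussian integral gives its inverse transform as $(\omega_0/2\pi i h)^{d/2}e^{i\omega_0|\tau'|^2/2h}$. Performing the $\zeta$-integral first recovers $M_{1,1}(z_0,r,\tau)$, so multiplication by this multiplier becomes convolution in $\tau$ and produces exactly
\begin{equation}\nonumber
\Big(\frac{\omega_0}{2\pi i h}\Big)^{d/2}\int_{\mathbb{R}^d}M_{1,1}(z_0,r,\tau-\tau';\Omega,\kappa)\,e^{\frac{i\omega_0}{2h}|\tau'|^2}\mathrm{d}\tau',
\end{equation}
which is~\eqref{eqn:m_11}. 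The one genuinely delicate step is the interchange of limit and integral in the second paragraph: the Fresnel phase does not decay in $\xi$, so plain weak convergence of $\hat M^\eps_{1,1}$ as measures would be insufficient. This is what forces me to lean on the $C_b$-mode of convergence delivered by the Feynman--Kac duality argument of Lemma~\ref{lemma:M_11}, which legitimizes the per-$\zeta$ passage to the limit against the bounded oscillatory test function, together with the uniform weighted $\zeta$-bounds of Proposition~\ref{prop:m_11_expn} that prevent mass from escaping to infinity in $\zeta$ and supply the domination needed for the outer integral.
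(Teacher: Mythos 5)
Your proposal is correct and follows exactly the route the paper intends: the corollary is stated as an immediate consequence of Proposition~\ref{prop:m_11_expn} and Lemma~\ref{lemma:M_11}, and your write-up simply makes explicit the steps the paper leaves implicit (discarding $E_1^\eps$, replacing $z_1\wedge z_2$ by $z_0$, passing to the limit in the $\eps$-dependent initial datum, using the bounded-measure/$C_b$ convergence of $\hat M^\eps_{1,1}$ against the bounded continuous Fresnel phase, and identifying the multiplier $e^{-ih|\xi|^2/2\omega_0}$ with the convolution kernel in \eqref{eqn:m_11}). No gaps; your emphasis on why weak convergence in the $C_b$-duality sense is the right mode for the non-decaying oscillatory test function is a fair and accurate reading of the proof of Lemma~\ref{lemma:M_11}.
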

We can now proceed to the proofs of the speckle memory effect in Theorems~\ref{thm:tilt_mem} and~\ref{thm:chrom_ax_mem}.
\paragraph{Lateral shifts and tilts and proof of Theorem~\ref{thm:tilt_mem}.}
\begin{proof}
  As $\Omega=h=0$, we have from Proposition~\ref{prop:m_11_expn} and~\eqref{eqn:M_11_Omega=0} that 
 \begin{equation}\nonumber
        \begin{aligned}
            m^\eps_{1,1}(z,z,r,\tau;0,\Delta\kappa)=&\int_{\mathbb{R}^{2d}}u_0\big(r'+\eps\eta\tau/2+\eps\eta\xi z/2\omega_0\big)u^\ast_0\big(r'-\eps\eta\tau/2-\eps\eta\xi z/2\omega_0\big)e^{i\xi\cdot(r-r')}e^{ir'\cdot\Delta\kappa'}\\
            &\times\exp\Big(\frac{\omega_0^2 z}{4\eta^2}\int_0^1Q\big(\eta\tau+\frac{\eta s\xi z}{\omega_0}\big)\mathrm{d}s\Big)\frac{\mathrm{d}\xi\mathrm{d}r'}{(2\pi)^d}\,.
        \end{aligned}
    \end{equation}
    Then we have that asymptotically, $\mathscr{C}^\eps_z(\tau,\Delta\kappa,\Delta\kappa')=\int_{\mathbb{R}^d}m^\eps_{1,1}(z,z,r,\tau;0,\Delta\kappa)e^{-ir\cdot\Delta\kappa}\mathrm{d}r$ is given by $\lim_{\eps\to 0}\mathscr{C}^\eps_z=\mathscr{C}_z$, where
\begin{equation}\label{eqn:C}
    \begin{aligned}
        \mathscr{C}_z(\tau,\Delta\kappa,\Delta\kappa')=D_{\sigma}(z,\tau,\Delta\kappa)\check{\Gamma}(\Delta\kappa-\Delta\kappa',0),\quad \check{\Gamma}(\kappa,0)=\int_{\mathbb{R}^d}|u_0(r)|^2e^{-ir\cdot\kappa}\mathrm{d}r\,,
    \end{aligned}
\end{equation}
and $D_\sigma$ is the diffusion kernel given by~\eqref{eqn:D_sigma}. Completing squares gives
\begin{equation}\nonumber
    \begin{aligned}
        D_{\sigma}(z,\tau,\xi)=\exp\big(-\frac{\omega_0^2\sigma^2 z}{8}\Big[|\tau|^2+\frac{z^2|\Delta\kappa|^2}{3\omega_0^2}+\frac{z\Delta\kappa\cdot\tau}{\omega_0}\Big]\big)=\exp\Big(-\frac{\sigma^2z^3}{24}\Big|\Delta\kappa+\frac{3\omega_0\tau}{2z}\Big|^2\Big)e^{-\frac{\sigma^2\omega_0^2z|\tau|^2}{32}}\,.
    \end{aligned}
\end{equation}
If $\check{\Gamma}(k,0)$ is maximal at $k=0$, the choice $\Delta\kappa'=\Delta\kappa$ leads to higher values of correlations. Moreover, the choice $\Delta\kappa=\Delta\kappa'=-\frac{3\omega_0\tau}{2z}$ maximizes $\mathscr{C}_z$, with the optimal value correlation being $\mathscr{C}_{z,opt}=e^{-\frac{\sigma^2\omega_0^2z|\tau|^2}{32}}\check{\Gamma}(0,0)$. This also improves the correlation width by a factor of 2, as opposed to the case when there are no tilts, as $\mathscr{C}_z(\tau,0,0)=e^{-\frac{\sigma^2\omega_0^2z|\tau|^2}{8}}\check{\Gamma}(0,0)$.
\end{proof}
\paragraph{Frequency and axial shifts and proof of Theorem~\ref{thm:chrom_ax_mem}.}
\begin{proof}
    Note that $M_{1,1}$ appearing in the expression of the asymptotic two-point correlation $m_{1,1}$~\eqref{eqn:m_11} can be re-written as:
\begin{equation}\nonumber
    M_{1,1}(z,r,\tau;\omega,\Omega,\kappa)=\int_{\mathbb{R}^d}\breve{M}_{1,1}(z,\zeta,\tau-\frac{\zeta z}{\omega_0};\omega,\Omega,\kappa)e^{i\zeta\cdot r}\frac{\mathrm{d}\zeta}{(2\pi)^d}\,,
\end{equation}
where $\breve{M}_{1,1}$ is given by the solution to
\begin{equation} \nonumber
    \begin{aligned}
        \partial_z\breve{M}_{1,1}=\frac{i\Omega}{2\omega_0^2}\Delta_\tau\breve{M}_{1,1}-\frac{z^2\sigma^2|\zeta|^2}{8}\breve{M}_{1,1}-\frac{z\omega_0\sigma^2}{4}(\zeta\cdot\tau)\breve{M}_{1,1}-\frac{\omega_0^2\sigma^2}{8}|\tau|^2\breve{M}_{1,1},\quad \breve{M}_{1,1}(0,\zeta,\tau)=\check{\Gamma}(\zeta-\kappa,0)\,,
    \end{aligned}
\end{equation}
with $\check{\Gamma}(\zeta,0)=\int_{\mathbb{R}^d}\hat{\Gamma}(\zeta,\xi)\frac{\mathrm{d}\xi}{(2\pi)^d}$.
This can be solved explicitly to get~\cite{zhu2020chromato, garnier2023speckle}
\begin{equation}\label{eqn:M_11_explicit}
    \breve{M}_{1,1}(z,\zeta,\tau;\omega,\Omega,\kappa)=\check{\Gamma}(\zeta-\kappa,0)e^{-[a(z)+b(z)|\tau|^2+c(z)\tau\cdot\zeta+d(z)|\zeta|^2]}
\end{equation}
where
\begin{equation}\label{eqn:abcd}
    \begin{aligned}
        a'(z)&=\frac{i\Omega b}{\omega_0^2},\quad b'(z)=-\frac{2i\Omega b^2}{\omega_0^2}+\frac{\omega_0^2\sigma^2}{8}\\
        c'(z)&=-\frac{2i\Omega bc}{\omega_0^2}+\frac{\omega_0\sigma^2 z}{4},\quad d'(z)=-\frac{i\Omega c^2}{2\omega_0^2}+\frac{\sigma^2z^2}{8}\,,
    \end{aligned}
\end{equation}
with initial condition $a(0)=b(0)=c(0)=d(0)=0$. The scalars $a,b,c,d$ are complex valued with non-negative real parts. For plane wave sources with $u_0=1$, we have $\check{\Gamma}(\zeta,0)=(2\pi)^d\delta_0(\zeta)$ and $\kappa=0$ so that $M_{1,1}(z,r,\tau;\omega,\Omega)=e^{-a(z)-b(z)|\tau|^2}$ and the two-point correlation
\begin{equation} \nonumber
\begin{aligned}
    m_{1,1}(h,\tau;\Omega)&=\Big(\frac{\omega_0}{2\pi i h}\Big)^{d/2}e^{-a(z_0)}\int_{\mathbb{R}^d}e^{-b(z_0)|\tau'^2|}e^{\frac{i\omega_0}{2h}|\tau-\tau'|^2}\mathrm{d}\tau'\\
    &=\frac{e^{-a(z_0)}}{(2ih/\omega)^{d/2}[b_R+i(b_I-\omega/2h)]^{d/2}}e^{\frac{i\omega_0}{2h}|\tau|^2}e^{-\frac{i\omega_0^2|\tau|^2(-b_I+\omega_0/2h)}{4h^2[b_R^2+(b_I-\omega_0/2h)^2]}}e^{-\frac{\omega_0^2b_R|\tau|^2}{4h^2[b_R^2+(b_I-\omega_0/2h)^2]}}\,,
\end{aligned}
\end{equation}
with $b=b_R+ib_I$ denoting the real and imaginary parts of $b$. The magnitude of this is
\begin{equation}\nonumber
    \begin{aligned}
        |m_{1,1}(h,\tau;\Omega)|=\frac{|e^{-a(z_0)}|}{(2h/\omega)^{d/2}[b_R^2+(b_I-\omega/2h)^2]^{d/4}}e^{-\frac{b_R\omega^2|\tau|^2}{4h^2[b_R^2+(b_I-\omega/2h)^2]}}\,.
    \end{aligned}
\end{equation}
The ODEs for $a$ and $b$ can be solved to get 
\begin{equation}\nonumber
    a(z)=\frac{1}{2}\log[\cosh{\alpha_\Omega z}],\quad b(z)=\frac{\omega^2\sigma^2}{8\alpha_\Omega}\tanh{\alpha_\Omega z},\quad\alpha_\Omega=e^{i\pi/4}\sqrt{\frac{\sigma^2\Omega}{4}}\,.
\end{equation}
At $\tau=0$, $|m_{1,1}|$ can be re-written as
\begin{equation}\nonumber
    |m_{1,1}(h,\tau;\Omega)|=\frac{|e^{-a(z_0)}|}{\Big(\frac{4}{\omega^2}(b_R^2+b_I^2)\big|h-\frac{b_I\omega}{2(b_R^2+b_I^2)}\big|^2+\frac{b_R^2}{(b_R^2+b_I^2)}\Big)^{d/4}}\,.
\end{equation}
For fixed $\Omega$, the optimal $h$ is then given by
\begin{equation}\nonumber
    h_{opt}=\frac{b_I\omega}{2(b_R^2+b_I^2)}\,.
\end{equation}
For small $|\alpha_\Omega z|=\frac{\sigma z\sqrt{\Omega}}{2}$, $b$ can be approximated as
\begin{equation}\nonumber
b\approx \frac{\omega^2\sigma^2 z}{8}\Big(1-\frac{i\sigma^2z^2\Omega}{12}\Big)\,.
\end{equation}
This gives $h_{opt}\approx-\frac{z\Omega}{3\omega[1+(\sigma^2z^2\Omega^2/12)^2]}\approx-\frac{z\Omega}{3\omega}$ for small $\Omega$.
\end{proof}

\section{Analysis and convergence of higher moments}
\label{sec:higher}
Here we analyze higher order statistical moments of the type
\begin{equation}\nonumber
    \mu^\eps_{p,q}(z,x;\omega,\sk)=\mathbb{E}\prod_{j=1}^pu^\eps(z_j,x_j;\omega_j,\sk_j)\prod_{l=1}^qu^{\eps\ast}(z'_l,x'_l;\omega'_l,\sk'_l)\,.
\end{equation}
When $z_j=z'_l=z$, these satisfy the following closed form PDEs from a generalization in~\cite{garnier2016fourth}:
\begin{equation}\label{eqn:mu_pq_PDE_fixed_z}
\begin{aligned}
    \partial_z\mu^\eps_{p,q}&=\frac{i\eta}{2\eps}(\sum_{j=1}^p\frac{1}{\omega_j}\Delta_{x_j}-\sum_{l=1}^q\frac{1}{\omega'_l}\Delta_{y_l})\mu^\eps_{p,q}+\mathcal{U}_{p,q}\mu^\eps_{p,q}\\
    \mu^\eps_{p,q}(0,x;\omega,\kappa)&=\prod_{j=1}^pu_0(\eps x_j)\prod_{j=1}^qu_0^\ast(\eps y_l)e^{i(\sum_{j=1}^p\sk_j\cdot x_j-\sum_{l=1}^q\sk'_l\cdot y_l)}\,,
\end{aligned}
    \end{equation}
where the potential $\mathcal{U}_{p,q}$ is given by
\begin{equation}\nonumber
    \begin{aligned}
        \mathcal{U}_{p,q}&=\sum_{j=1}^p\sum_{l=1}^q\frac{\omega_j\omega'_l}{4\eta^2}R(x_j-y_l)-\hspace{-0.4cm}\sum_{1\le j<j'\le p}\frac{\omega_j\omega_{j'}}{4\eta^2}R(x_j-x_{j'})-\hspace{-0.4cm}\sum_{1\le l<l'\le q}\frac{\omega'_l\omega'_{l'}}{4\eta^2}R(y_l-y_{l'})-(\sum_{j=1}^p\omega_j^2+\sum_{l=1}^q{\omega'_l}^2)\frac{R(0)}{8\eta^2}\,.
    \end{aligned}
\end{equation}
These moments do not satisfy closed form PDEs in the general case when $z_j=z'_l=z$. However, as will be shown in the subsequent computations, appropriately phase compensated moments at different $z$s are close to the phase compensated moments at the same $z$, provided the perturbation in $z$ is small. 

\subsection{Phase compensation}
For $z\ge z_0$, writing the phase compensated field~\eqref{eqn:phase_comp} as a Duhamel expansion starting at $z_0$ gives
 \begin{equation}\label{eqn:psi_exp}
     \psi^\eps(z,\xi;\omega,\sk)=\psi^\eps(z_{0},\xi;\omega,\sk)+\sum_{n\ge 1}\psi_n^\eps(z,\xi;\omega,\sk)\,,
 \end{equation}
 where
 \begin{equation}\nonumber
 \begin{aligned}
     \psi^\eps_n(z,\xi;\omega,\sk)=\Big(\frac{\omega}{2\eta}\Big)^ne^{\frac{in\pi}{2}}\int_{[z_{0},z]_<^n}\hspace{-0.1cm}\int_{\mathbb{R}^{nd}}&e^{\frac{i\eta}{2\eps\omega}[s_1g(\xi,k_1)+s_2g(\xi-k_1,k_2)+\cdots+s_ng(\xi-k_1-\cdots-k_{n-1},k_n)]}\\
     &\psi^\eps(z_0,\xi-k_1-\cdots-k_n;\omega,\sk)\prod_{j=1}^n\frac{\mathrm{d}\hat{B}(s_j,k_j)}{(2\pi)^d}.
 \end{aligned}
 \end{equation}
Let $v=(\xi_1,\cdots,\xi_p,\zeta_1,\cdots,\zeta_q)\in\mathbb{R}^{(p+q)d}$ be the vector of dual variables and $\Psi^\eps_{p,q}$ denote the expectation of $p+q$ such phase compensated fields, i.e,
\begin{equation}\label{eqn:Psi_pq_def}
    \Psi^\eps_{p,q}(z,v;\omega,\sk)=\mathbb{E}\prod_{j=1}^p\psi^\eps(z_j,\xi_j;\omega_j,\sk_j)\prod_{l=1}^q\psi^{\eps\ast}(z'_l,\zeta_l;\omega'_l,\sk'_l)\,.
\end{equation}
The main result of this section is the following.
 \begin{lemma}\label{lem:phase_comp_pq}
     Let $z_{\max}=\max\{z_j,z'_l\}$, $z_{\min}=\min\{z_j,z'_l\}$, $\omega_{\max}=\max\{\omega_j,\omega'_l\}$. 
     The phase compensated moments can be decomposed as
     \begin{equation}\nonumber
         \Psi^\eps_{p,q}(z_1,\cdots,z_p,z'_1,\cdots,z'_q,v;\omega,\sk)=\Psi^\eps_{p,q}(z_{0},\cdots,z_{0},v;\omega,\sk)+E^\eps_2\,,
     \end{equation}
     where the error 
     \begin{equation}\nonumber
        \sup_{z_j,z'_l\in[z_{\min},z_{\max}]} \|E^\eps_2(z,v;\cdot)\|\le 2\|\Psi^\eps_{p,q}(z_{\min},\cdots,z_{\min},v;\omega,\sk)\|(e^{\frac{\mathfrak{C}(z_{\max}-z_{\min})}{\eta^2}}-1)\,,
     \end{equation}
     and $\mathfrak{C}=\frac{1}{8}\omega_{\max}^2R(0)(p+q)^2$. In particular, under the scaling~\eqref{eqn:scalings} we have $z_{\max}-z_{0}\le c\eps\eta$ so that
     \begin{equation}\nonumber
         \sup_{z_j,z'_l\in[z_{\min},z_{\max}]} \|E^\eps_2(z,v;\cdot)\|\le C\frac{\eps}{\eta}\|\Psi^\eps_{p,q}(z_{\min},\cdots,z_{\min},v;\omega,\sk)\|\,.
     \end{equation}
     \begin{proof}
         Using the Duhamel expansion starting at $z=z_{\min}$,
         \begin{equation}\nonumber
             \begin{aligned}
          &       \prod_{j=1}^p\psi^\eps(z_j,\xi_j;\cdot)\prod_{l=1}^q\psi^{\eps\ast}(z'_l,\zeta_l;\cdot)= \prod_{j=1}^p\psi^\eps(z_{\min},\xi_j;\cdot)\prod_{l=1}^q\psi^{\eps\ast}(z_{\min},\zeta_l;\cdot)\\
          &+\sum_{n_1+\cdots+n_{p+q}=n>0}\frac{\prod_{j=1}^p\omega_j^{n_j}\prod_{l=1}^q{\omega'_l}^{n_{p+l}}}{(2\eta)^n}\int_{[z_{\min},z_1]^{n_1}_<}\cdots\int_{[z_{\min},z'_{q}]_<^{n_{p+q}}}\int_{\mathbb{R}^{nd}}e^{\frac{i\eta G_n(\vec{s},\vec{k},v;\omega)}{2\eps}}\\
          &\prod_{j=1}^p\psi^\eps(z_{\min},\xi_j-k_{j,1}-\cdots -k_{j,n_j};\sk_j)\prod_{l=1}^{q}\psi^{\eps\ast}(z_{\min},\zeta_l+k'_{l,1}+\cdots+k'_{l,n_{p+l}};\sk'_l)\\
          &\prod_{j=1}^p\prod_{j'=1}^{n_j}\frac{\mathrm{d}\hat{B}(s_{j,j'},k_{j,j'})}{(2\pi)^d}\prod_{l=1}^q\prod_{l'=1}^{n_{p+l}}\frac{\mathrm{d}\hat{B}(s'_{l,l'},k'_{l,l'})}{(2\pi)^d}\,,
             \end{aligned}
         \end{equation}
         where $(\vec{s},\vec{k})=(s_{1,1},\cdots,s_{p,n_p},s'_{1,1},\cdots,s'_{q,n_{p+q}},k_{1,1},\cdots,k_{p,n_p},k'_{1,1},\cdots,k'_{q,n_{p+q}})$ and $G_n$ is the real valued phase
         \begin{equation}\nonumber
             \begin{aligned}
                 G_n(\vec{s},\vec{k},v;\omega)&=\sum_{j=1}^p\sum_{j'=1}^{n_j}\frac{s_{j,j'}}{\omega_j}g(\xi_j-k_{j,1}-\cdots-k_{j,{j'-1}},k_{j,j'})\\
                 &-\sum_{l=1}^q\sum_{l'=1}^{n_{p+l}}\frac{s'_{l,l'}}{\omega'_l}g(\zeta_l+k'_{l,1}+\cdots+k'_{l,{l'-1}},-k'_{l,l'})+\frac{\eps\pi}{\eta}(n_1+\cdots+n_p-n_{p+1}-\cdots-n_{p+q})\,.
             \end{aligned}
         \end{equation}
         Taking expectations, 
         \begin{equation}\nonumber
             \begin{aligned}
              & \|  \Psi^\eps_{p,q}(z,v;\cdot)-\Psi^\eps_{p,q}(z_{\min},\cdots,z_{\min},v;\cdot)\|\le \|\Psi^\eps_{p,q}(z_{\min},\cdots,z_{\min},v;\cdot)\|\\
               & \sum_{n_1+\cdots+n_{p+q}=2n>0}\Big(\frac{\omega_{\max}}{2\eta}\Big)^{2n}\int_{[z_{\min},z_{\max}]^{n_1}_<}\cdots\int_{[z_{\min},z_{\max}]^{n_{p+q}}_<}\int_{\mathbb{R}^{2nd}}|\mathbb{E}\prod_{j=1}^{2n}\frac{\mathrm{d}\hat{B}(s_j,k_j)}{(2\pi)^d}|\,.
             \end{aligned}
         \end{equation}
         The integrand above is symmetric with respect to permutations, so we have
             \begin{equation}\nonumber
             \begin{aligned}
              & \|  \Psi^\eps_{p,q}(z,v;\cdot)-\Psi^\eps_{p,q}(z_{\min},\cdots,z_{\min},v;\cdot)\|\le \|\Psi^\eps_{p,q}(z_{\min},\cdots,z_{\min},v;\cdot)\|\\
               & \sum_{n_1+\cdots+n_{p+q}=2n>0}\Big(\frac{\omega_{\max}}{2\eta}\Big)^{2n}\frac{1}{n_1!\cdots n_{p+q}!}\int_{[z_{\min},z_{\max}]^{2n}}\int_{\mathbb{R}^{2nd}}|\mathbb{E}\prod_{j=1}^{2n}\frac{\mathrm{d}\hat{B}(s_j,k_j)}{(2\pi)^d}|\\
               &=\|\Psi^\eps_{p,q}(z_{\min},\cdots,z_{\min},v;\cdot)\|\sum_{n_1+\cdots+n_{p+q}=2n>0}|P^{2n}|\Big(\frac{\omega_{\max}}{2\eta}\Big)^{2n}\frac{(z_{\max}-z_{\min})^nR(0)^n}{n_1!\cdots n_{p+q}!}\,,
(e^{\frac{\mathfrak{C}(z_{\max}-z_{\min})}{\eta^2}}-1)\,.             \end{aligned}
         \end{equation}     
        where $|P^{2n}|=\frac{(2n)!}{n!2^n}$ is the number of possible pairings of $2n$ Gaussian random variables. As a result of the combinatorial observation in Lemma~\ref{lemma:fact_sum} stated next, we have
        \begin{equation}\nonumber
            \| \Psi^\eps_{p,q}(z_1,\cdots,z_p,z'_1,\cdots,z'_q,v;\omega,\sk)-\Psi^\eps_{p,q}(z_{\min},\cdots,z_{\min},v;\omega,\sk)\|\le\|\Psi^\eps_{p,q}(z_{\min},\cdots,z_{\min},v;\omega,\sk)\| (e^{\frac{\mathfrak{C}(z_{\max}-z_{\min})}{\eta^2}}-1)\,.
        \end{equation}
        The proof is completed after approximating $\Psi^\eps_{p,q}(z_{\min},\cdots,z_{\min},v;\omega,\sk)$ by $\Psi^\eps_{p,q}(z_{0},\cdots,z_{0},v;\omega,\sk)$ in the total variation norm.
     \end{proof}
 \end{lemma}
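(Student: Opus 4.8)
The plan is to base the Duhamel expansion \eqref{eqn:psi_exp} of each of the $p+q$ phase-compensated factors at the common earliest time $z_{\min}$ rather than at $z_0$, multiply the resulting series, and take the expectation. The unique term in which every factor is retained at Duhamel order zero freezes all fields at $z_{\min}$ and reproduces exactly $\Psi^\eps_{p,q}(z_{\min},\ldots,z_{\min},v;\omega,\sk)$; all remaining terms are collected into $E^\eps_2$. The whole problem then reduces to a quantitative total-variation bound on this remainder.

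To estimate $E^\eps_2$ I would pass to absolute values. The oscillatory phase $e^{i\eta G_n/(2\eps)}$ has unit modulus and is simply dropped; this is the step that makes the bound crude but $\eps$-uniform. Taking the Gaussian expectation of the increments $\mathrm{d}\hat B$ by Wick's theorem turns each term carrying $2n$ total increments into a sum over the $|P^{2n}|=(2n)!/(n!2^n)$ pairings, each of which forces a coincidence of times and contributes a covariance factor $\hat R(k)$. Integrating each paired wavenumber gives $\int\hat R(k)\,\mathrm{d}k/(2\pi)^d=R(0)$, each surviving time integral over $[z_{\min},z_{\max}]$ gives a factor $z_{\max}-z_{\min}$, and the wavenumber shifts in the arguments of the frozen fields are absorbed by translation invariance of the TV norm, which is how the common prefactor $\|\Psi^\eps_{p,q}(z_{\min},\ldots,z_{\min},v;\omega,\sk)\|$ is extracted. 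Replacing every frequency by $\omega_{\max}$ bounds the coupling weight by $(\omega_{\max}/2\eta)^{2n}$, and symmetrizing each per-field ordered time simplex produces the multinomial factors $1/(n_1!\cdots n_{p+q}!)$.

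The decisive step is the combinatorial resummation, which I would isolate as the auxiliary Lemma \ref{lemma:fact_sum}. Summing $1/(n_1!\cdots n_{p+q}!)$ over all decompositions $n_1+\cdots+n_{p+q}=2n$ gives $(p+q)^{2n}/(2n)!$ by the multinomial theorem; combined with $|P^{2n}|=(2n)!/(n!2^n)$ and $(\omega_{\max}/2\eta)^{2n}$ the $n$-th contribution collapses to $\frac1{n!}\big[\mathfrak{C}(z_{\max}-z_{\min})/\eta^2\big]^n$ with exactly $\mathfrak{C}=\tfrac18\omega_{\max}^2R(0)(p+q)^2$, the power $(p+q)^2$ recording that the two increments in each contraction may each be attached to any of the $p+q$ fields. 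Summing over $n\ge 1$ yields the gap $e^{\mathfrak{C}(z_{\max}-z_{\min})/\eta^2}-1$, which is the first asserted bound. The final estimate then follows by approximating $z_{\min}$ by $z_0$ in the TV norm and invoking the scaling \eqref{eqn:scalings}, under which $z_{\max}-z_{\min}\le c\eps\eta$; hence $(z_{\max}-z_{\min})/\eta^2\le c\eps/\eta\to 0$ and $e^x-1\le 2x$ for small $x$ converts the exponential into the linear bound $C\eps/\eta$.

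I expect the combinatorial bookkeeping to be the main obstacle: one must align the number of Wick pairings, the per-field multinomial time-ordering factors, and the powers of the coupling constant so that they resum to a single exponential with the precise constant $\mathfrak{C}$, and one must verify that the prefactor $\|\Psi^\eps_{p,q}(z_{\min},\ldots)\|$ can be pulled out uniformly over all contraction patterns by translation invariance of the total-variation norm. A secondary point to watch is that discarding the phase is harmless only because the bound is sought in TV norm and not in an oscillation-sensitive norm.
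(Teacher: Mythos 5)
Your proposal is correct and follows essentially the same route as the paper: Duhamel expansion of each factor based at $z_{\min}$, dropping the unimodular phases, Wick pairing with $|P^{2n}|=(2n)!/(n!2^n)$, symmetrization of the time simplices to produce the multinomial factors, and resummation to the exponential gap $e^{\mathfrak{C}(z_{\max}-z_{\min})/\eta^2}-1$ before approximating $z_{\min}$ by $z_0$. Your direct multinomial-theorem evaluation of $\sum 1/(n_1!\cdots n_{p+q}!)=(p+q)^{2n}/(2n)!$ is a slightly cleaner derivation of the paper's combinatorial Lemma~\ref{lemma:fact_sum}, but it is the same identity and the same constant $\mathfrak{C}$.
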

 We verify a combinatorial bound of the following type used in the proof of Lemma~\ref{lem:phase_comp_pq} above. 
 \begin{lemma}\label{lemma:fact_sum}
     For a constant $c\ge 0$ and $p$ a positive integer,
     \begin{equation}\nonumber
         \sum_{n_1+\cdots+n_p=2n\ge 0}\frac{c^n(2n)!}{n!n_1!\cdots n_p!}=e^{p^2c}\,.
     \end{equation}
     \begin{proof}
     The sum above can be written as
     \begin{equation}\nonumber
     \begin{aligned}
     &   \sum_{n\ge 0}\frac{c^n}{n!}\sum_{n_1+\cdots+n_{p-1}=0}^{2n}\frac{(2n)!}{n_1!\cdots n_{p-1}!(2n-n_{p-1}-\cdots-n_1)!}\\
     &=\sum_{n\ge 0}\frac{c^n}{n!}\sum_{n_1+\cdots+n_{p-2}=0}^{2n}\frac{(2n)!}{n_1!\cdots n_{p-2}!}\sum_{n_{p-1}=0}^{2n-n_{p-2}-\cdots-n_1}\frac{1}{n_{p-1}!(2n-n_{p-1}-\cdots-n_1)!}\,.
     \end{aligned}
         \end{equation}
        Using $\sum_{m=0}^N\frac{b^{N-m}}{m!(N-m)!}=\frac{(1+b)^N}{N!}$, the sum above is
        \begin{equation}\nonumber
        \begin{aligned}
       \sum_{n\ge 0}\frac{c^n}{n!}\sum_{n_1+\cdots+n_{p-2}=0}^{2n}\frac{(2n)!2^{2n-n_{p-2}-\cdots-n_1}}{n_1!\cdots n_{p-2}!(2n-n_{p-2}-\cdots-n_1)!}\,.
        \end{aligned}
            \end{equation}
            Repeating this upto $n_2$ gives
            \begin{equation}\nonumber
                \sum_{n\ge 0}\frac{c^n}{n!}\sum_{n_1=0}^{2n}\frac{(2n)!(p-1)^{2n-n_1}}{n_1!(2n-n_1)!}=\sum_{n\ge 0}\frac{c^np^{2n}}{n!}=e^{p^2c}\,.
            \end{equation}
     \end{proof}
 \end{lemma}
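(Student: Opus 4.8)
The plan is to separate the two layers of summation: first group the terms by the value of the auxiliary index $n$ (recall the constraint forces $n_1+\cdots+n_p$ to equal the even number $2n$), and then evaluate the resulting inner sum by the multinomial theorem. Concretely, I would rewrite the left-hand side as
\begin{equation}\nonumber
\sum_{n\ge 0}\frac{c^n}{n!}\sum_{\substack{n_1+\cdots+n_p=2n}}\frac{(2n)!}{n_1!\cdots n_p!},
\end{equation}
and observe that the weight $(2n)!/(n_1!\cdots n_p!)$ is precisely the multinomial coefficient $\binom{2n}{n_1,\dots,n_p}$ associated with the expansion of $(x_1+\cdots+x_p)^{2n}$.

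The second step is then immediate: setting $x_1=\cdots=x_p=1$ in the multinomial formula gives
\begin{equation}\nonumber
\sum_{n_1+\cdots+n_p=2n}\binom{2n}{n_1,\dots,n_p}=p^{2n}.
\end{equation}
Inserting this into the previous display collapses the double sum to a single exponential series,
\begin{equation}\nonumber
\sum_{n\ge 0}\frac{c^n p^{2n}}{n!}=\sum_{n\ge 0}\frac{(cp^2)^n}{n!}=e^{cp^2},
\end{equation}
which is the claimed identity.

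There is essentially no analytic obstacle here. The only point to record is the legitimacy of reorganizing the doubly-indexed sum into the iterated form above; since $c\ge 0$ and all multinomial coefficients are non-negative, every summand is non-negative, so the rearrangement is justified by Tonelli's theorem for series (non-negative sums may be reordered freely), and the final single series converges for all $c\ge 0$ and all integers $p\ge 1$. In short, once the sum is regrouped, the lemma is a one-line consequence of the multinomial theorem, and the ``hard part'' is merely the harmless bookkeeping of identifying the multinomial weight.

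For completeness I note that the same computation can be carried out without explicitly invoking the multinomial theorem, by repeatedly collapsing one index at a time using the elementary identity $\sum_{m=0}^N \tfrac{b^{N-m}}{m!(N-m)!}=\tfrac{(1+b)^N}{N!}$: summing out $n_{p-1}$ (with $b=1$) produces a factor $2^{\,\cdot}$, the next sum (with $b=2$) a factor $3^{\,\cdot}$, and so on, so that after $p-1$ steps the inner sum again equals $p^{2n}$. This inductive ``peeling'' is more laborious but conceptually equivalent, and I would present the direct multinomial argument as the cleaner route.
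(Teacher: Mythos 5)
Your proof is correct. The heart of both arguments is the identity $\sum_{n_1+\cdots+n_p=2n}\binom{2n}{n_1,\dots,n_p}=p^{2n}$; you obtain it in one stroke from the multinomial theorem at $x_1=\cdots=x_p=1$, whereas the paper derives the same fact by iteratively summing out one index at a time with $\sum_{m=0}^N\frac{b^{N-m}}{m!(N-m)!}=\frac{(1+b)^N}{N!}$, accumulating factors $2,3,\dots,p$ --- which is nothing but a hands-on inductive proof of that multinomial identity, as you yourself observe in your closing remark. Your route is cleaner and also makes explicit two points the paper leaves implicit: the justification for regrouping the doubly-indexed sum (all terms non-negative, so Tonelli applies) and the convergence of the resulting series. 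Nothing is missing; the two proofs are conceptually equivalent, with yours being the more economical presentation.
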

 A consequence of Lemma~\ref{lem:phase_comp_pq} is that the statistical moments at different $z$s can be analyzed using the statistical moments at the same $z$ after phase compensating in the Fourier domain and phase recompensating and inverse Fourier transforming to the physical domain. This will require an adaptation of~\cite[Theorem 4.1]{bal2024complex} to the multi-frequncy case.
\subsection{Multi-frequency moments at fixed distance}
Denote by $\phi^\eps_{p,q}$ the $p+q$th moment at a fixed $z$ for different frequencies, i.e,
\begin{equation}\nonumber
    \phi^\eps_{p,q}(z,x;\omega,\sk)=\mathbb{E}\prod_{j=1}^pu^\eps(z,x_j;\omega_j,\sk_j)\prod_{l=1}^qu^{\eps\ast}(z,y_l;\omega'_l,\sk'_l)\,.
\end{equation}
This solves the PDE~\eqref{eqn:mu_pq_PDE_fixed_z}. Fourier transforming, we have
\begin{equation}\nonumber
    \begin{aligned}
        &\partial_z\hat{\phi}^\eps_{p,q}=-\frac{i\eta}{2\eps}\Big(\sum_{j=1}^p\frac{|\xi_j|^2}{\omega_j}-\sum_{l=1}^q\frac{|\zeta|^2}{\omega'_l}\Big)\hat{\phi}^\eps_{p,q}+\frac{1}{4\eta^2}\int_{\mathbb{R}^d}\Big[\sum_{j=1}^p\sum_{l=1}^q\omega_j\omega'_l\hat{\phi}^\eps_{p,q}(\xi_j-k,\zeta_l-k)\\
        &-\sum_{1\le j<j'\le p}\omega_j\omega_{j'}\hat{\phi}^\eps_{p,q}(\xi_j-k,\xi_{j'}+k)-\sum_{1\le l<l'\le q}\omega'_l\omega'_{l'}\hat{\phi}^\eps_{p,q}(\zeta_l-k,\zeta_{l'}+k)-\frac{1}{2}(\sum_{j=1}^p\omega_j^2+\sum_{l=1}^q{\omega'}^2_{l})\hat{\phi}^\eps_{p,q}\Big]\frac{\hat{R}(k)\mathrm{d}k}{(2\pi)^d}\\
        &\hat{\phi}^\eps(0)=\eps^{-(p+q)\beta d}\prod_{j=1}^p\hat{u}_0\Big(\frac{\xi_j-\sk_j}{\eps}\Big)\prod_{l=1}^q\hat{u}^\ast_0\Big(\frac{\zeta_l-\sk'_l}{\eps}\Big)\,.
    \end{aligned}
\end{equation}
Define the phase compensated moment
\begin{equation}\nonumber
    \varphi^\eps_{p,q}(z,v;\omega,\sk)=\hat{\phi}^\eps_{p,q}(z,v;\omega,\sk)e^{\frac{i\eta z}{2\eps}\big(\sum\limits_{j=1}^p\frac{|\xi_j|^2}{\omega_j}-\sum\limits_{l=1}^q\frac{|\zeta|^2}{\omega'_l}\big)}\,.
\end{equation}
This solves the evolution equation
\begin{equation}\label{eqn:varphi_evol}
    \begin{aligned}
        \partial_z\varphi^\eps&=L^\eps_{p,q}\varphi^\eps_{p,q},\quad \varphi^\eps_{p,q}(0)=\hat{\phi}^\eps_{p,q}(0)\,,
    \end{aligned}
\end{equation}
where the operator $L^\eps_{p,q}$ is a modified version of the operator in~\cite[Eq. (48)]{bal2024complex} given by
\begin{equation}\nonumber
    \begin{aligned}
        L^\eps_{p,q}&=\frac{p+q}{2}L_\eta+\sum_{j=1}^p\sum_{l=1}^qL^{\eps,1}_{j,l}+\sum_{1\le j<j'\le p}L^{\eps,2}_{j,j'}+\sum_{p+1\le l<l'\le p+q}L^{\eps,2}_{l,l'}\,,\\
        L_\eta&=-\frac{C_0}{\eta^2},\quad C_0=\frac{(\sum_{j=1}^p\omega_j^2+\sum_{l=1}^q{\omega'_l}^2)}{p+q}\frac{R(0)}{4}\,,\\
        L^{\eps,1}_{j,l}\psi&=\frac{\omega_j\omega'_l}{4\eta^2}\int_{\mathbb{R}^d}\psi(\xi_j-k,\zeta_l-k)e^{\frac{i\eta z}{2\eps}\big(\frac{g(\xi_j,k)}{\omega_j}-\frac{g(\zeta_l,k)}{\omega'_l}\big)}\frac{\hat{R}(k)\mathrm{d}k}{(2\pi)^d},\quad 1\le j\le p, 1\le l\le q\,,\\
        L_{j,j'}^{\eps,2}\psi&=-\frac{\omega_j\omega_{j'}}{4\eta^2}\int_{\mathbb{R}^d}\psi(\xi_j-k,\xi_{j'}+k)e^{\frac{i\eta z}{2\eps}\big(\frac{g(\xi_j,k)}{\omega_j}+\frac{g(\xi_{j'},-k)}{\omega_{j'}}\big)}\frac{\hat{R}(k)\mathrm{d}k}{(2\pi)^d},\quad 1\le j<j'\le p\,,\\
        L_{l,l'}^{\eps,2}\psi&=-\frac{\omega'_l\omega'_{l'}}{4\eta^2}\int_{\mathbb{R}^d}\psi(\zeta_l-k,\zeta_{l'}+k)e^{-\frac{i\eta z}{2\eps}\big(\frac{g(\zeta_l,k)}{\omega'_l}+\frac{g(\zeta_{l'},-k)}{\omega'_{l'}}\big)}\frac{\hat{R}(k)\mathrm{d}k}{(2\pi)^d},\quad p+1\le l<l'\le p+q\,.
    \end{aligned}
\end{equation}
Let $\mathcal{M}_B(\mathbb{R}^{(p+q)d})$ denote the Banach space of finite signed measures on $\mathbb{R}^{(p+q)d}$ equipped with the TV norm. We denote by $\|\cdot\|$ the TV norm of bounded measures, as well as the operator norm of operators acting on such measures. The following regularity estimates can be easily verified.
\begin{lemma}\label{lemma:phi_estim}
The following bounds hold.
\begin{equation}\nonumber
\begin{aligned}
  \|\varphi^\eps(z)\|&=\|\hat{\phi}^\eps(z)\|\le \|\hat{\phi}^\eps(0)\|e^{\frac{R(0)z(p+q)^2\omega_{\max}^2}{8\eta^2}}\,,\\
  \|\langle v_j\rangle^2\varphi^\eps(z,v;\cdot)\|&=\|\langle v_j\rangle^2\hat{\phi}^\eps(z,v;\cdot)\|\le \|\langle v_j\rangle^2\hat{\phi}^\eps(0,v;\cdot)\|e^{\frac{z(p+q)^2\omega_{\max}^2}{2\eta^2}\int_{\mathbb{R}^d}\langle k\rangle^2\hat{R}(k)\frac{\mathrm{d}k}{(2\pi)^d}},\quad 1\le j\le p+q \,.   
\end{aligned}
   \end{equation}
\end{lemma}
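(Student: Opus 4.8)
The plan is to read off both estimates from the mild (Duhamel) formulation of the evolution equation $\partial_z\varphi^\eps=L^\eps_{p,q}\varphi^\eps$ in the Banach space $\mathcal{M}_B(\mathbb{R}^{(p+q)d})$, together with a crude operator-norm bound on $L^\eps_{p,q}$ that discards all oscillation in the phases $e^{\frac{i\eta z}{2\eps}(\cdots)}$. First, the equality $\|\varphi^\eps(z)\|=\|\hat{\phi}^\eps(z)\|$ is immediate, since $\varphi^\eps$ differs from $\hat{\phi}^\eps$ only by multiplication by the unimodular factor $e^{\frac{i\eta z}{2\eps}(\sum_j|\xi_j|^2/\omega_j-\sum_l|\zeta_l|^2/\omega'_l)}$, and multiplication by a function of modulus one is a TV-isometry. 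Writing $\varphi^\eps(z)=\varphi^\eps(0)+\int_0^zL^\eps_{p,q}\varphi^\eps(s)\,\mathrm{d}s$ and taking TV norms gives $\|\varphi^\eps(z)\|\le\|\hat{\phi}^\eps(0)\|+\|L^\eps_{p,q}\|\int_0^z\|\varphi^\eps(s)\|\,\mathrm{d}s$, so Gr\"onwall's inequality yields $\|\varphi^\eps(z)\|\le\|\hat{\phi}^\eps(0)\|e^{\|L^\eps_{p,q}\|z}$. Everything then reduces to bounding $\|L^\eps_{p,q}\|$.

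For the first bound I would estimate each summand of $L^\eps_{p,q}$ separately. Since $\hat{R}\ge0$, the phases are unimodular, and each integral operator is a superposition of measure-preserving shifts weighted by $\hat{R}(k)\,\mathrm{d}k/(2\pi)^d$, one has $\|L^{\eps,1}_{j,l}\|,\|L^{\eps,2}_{\cdot,\cdot}\|\le\frac{\omega_{\max}^2}{4\eta^2}\int_{\mathbb{R}^d}\hat{R}(k)\frac{\mathrm{d}k}{(2\pi)^d}=\frac{\omega_{\max}^2R(0)}{4\eta^2}$, using the Fourier-inversion identity $\int\hat{R}\,\mathrm{d}k/(2\pi)^d=R(0)$. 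The scalar term contributes $\frac{p+q}{2}|L_\eta|=\frac{(\sum_j\omega_j^2+\sum_l{\omega'_l}^2)R(0)}{8\eta^2}\le\frac{(p+q)\omega_{\max}^2R(0)}{8\eta^2}$. The number of off-diagonal operators is $pq+\binom{p}{2}+\binom{q}{2}=\binom{p+q}{2}=\frac{(p+q)(p+q-1)}{2}$, so summing gives $\|L^\eps_{p,q}\|\le\frac{(p+q)\omega_{\max}^2R(0)}{8\eta^2}+\frac{(p+q)(p+q-1)}{2}\cdot\frac{\omega_{\max}^2R(0)}{4\eta^2}=\frac{(p+q)^2\omega_{\max}^2R(0)}{8\eta^2}$, which is exactly the claimed exponent.

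For the weighted bound I would propagate the multiplier $\langle v_j\rangle^2$ through the same Duhamel scheme, now estimating $\|\langle v_j\rangle^2L^\eps_{p,q}\psi\|$ in terms of $\|\langle v_j\rangle^2\psi\|$. The only new ingredient is the Peetre-type inequality $\langle v_j\rangle^2\le2\langle v_j-k\rangle^2\langle k\rangle^2$, applied whenever an operator shifts the distinguished variable $v_j$, namely the $q$ operators $L^{\eps,1}_{j,\cdot}$ and the operators $L^{\eps,2}$ whose index set contains $j$. After the change of variables the weight $\langle v_j-k\rangle^2$ lands on $\psi$ while the extra factor $\langle k\rangle^2$ is absorbed into the covariance integral, upgrading $R(0)=\int\hat{R}\,\mathrm{d}k/(2\pi)^d$ to $2\int\langle k\rangle^2\hat{R}(k)\,\mathrm{d}k/(2\pi)^d$; operators that leave $v_j$ untouched commute with the weight and obey the plain bound, which is dominated by the upgraded one since $\langle k\rangle\ge1$. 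Summing over the same $\binom{p+q}{2}$ off-diagonal terms plus the scalar term, and using $2(p+q)-1\le4(p+q)$ to collapse the prefactor, gives the total bound $\frac{(p+q)^2\omega_{\max}^2}{2\eta^2}\int_{\mathbb{R}^d}\langle k\rangle^2\hat{R}(k)\frac{\mathrm{d}k}{(2\pi)^d}$, matching the second exponent, and Gr\"onwall finishes the argument.

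There is no deep obstacle here, consistent with the statement that these bounds are ``easily verified''; the only care needed is bookkeeping. The first point is to argue consistently in the space of finite signed measures, where a shift is an isometry and the Duhamel-plus-Gr\"onwall scheme is legitimate because $L^\eps_{p,q}$ is bounded for fixed $\eps,\eta$. The second is the combinatorial identity $pq+\binom{p}{2}+\binom{q}{2}=\binom{p+q}{2}$ that makes the prefactors collapse to the clean $(p+q)^2$; matching the constant $2$ from the Peetre inequality against this count to reproduce the exact factors $\tfrac18$ and $\tfrac12$ in the two exponents is the one place where a slipped constant would be visible. Crucially, the argument deliberately throws away the oscillatory phases, so these are only a priori regularity estimates, and the cancellation encoded in those phases is precisely what is reserved for the later convergence analysis.
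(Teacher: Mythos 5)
The paper offers no proof of this lemma (it is stated as "easily verified"), and your argument is the natural one, consistent with how the paper handles analogous estimates elsewhere (e.g.\ the weighted TV bounds on $\hat{\breve M}^\eps_{1,1}$ in Proposition 3.3): discard the unimodular phases, bound each convolution-type term in operator norm on $\mathcal{M}_B$ via $\int\hat R\,\mathrm{d}k/(2\pi)^d=R(0)$ (resp.\ the Peetre inequality $\langle v_j\rangle^2\le 2\langle v_j-k\rangle^2\langle k\rangle^2$ for the weighted case), and apply Gr\"onwall. Your constants check out — the count $pq+\binom{p}{2}+\binom{q}{2}=\binom{p+q}{2}$ together with the scalar term gives exactly $\tfrac{(p+q)^2\omega_{\max}^2R(0)}{8\eta^2}$, and the weighted total is in fact bounded by $\tfrac{(p+q)^2+2(p+q)-2}{8}\le\tfrac{(p+q)^2}{2}$ times $\omega_{\max}^2\eta^{-2}\int\langle k\rangle^2\hat R\,\mathrm{d}k/(2\pi)^d$ — so the proof is correct.
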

Let $U^\eps_{j,l}$ be the solution operator to
\begin{equation}\nonumber
    \partial_z U^\eps_{j,l}=L^{\eps,1}_{j,l}U^{\eps}_{j,l},\quad U^\eps_{j,l}(0)=\mathbb{I}\,.
\end{equation}
We also denote by $U_\eta(z)=e^{-\frac{C_0z}{\eta^2}}$ the solution operator to 
\begin{equation}\nonumber
    \partial_zU_\eta=L_\eta U_\eta,\quad U_\eta(0)=\mathbb{I}\,.
\end{equation}
Let $U^\eps$ denote the solution operator to 
\begin{equation}\nonumber
    \partial_zU^\eps=L^{\eps}_{p,q}U^\eps,\quad U^\eps(0)=\mathbb{I}\,.
\end{equation}
Now, we briefly recall a set of pairings introduced in~\cite[Section 4.2]{bal2024complex}. Let $\gamma=(j,l)$ denote a pairing with $1\le j\le p, 1\le l\le q$. For $1\le m\le M=\sum_{n=1}^{p\wedge q}\binom{p}{n}\binom{q}{n}n!$, let $\Lambda_m$ denote the set of ordered pairings $(j,l)$ such that for any $\gamma,\gamma'\in\Lambda_m$, $\gamma_1\neq\gamma'_1, \gamma_2\neq\gamma'_2$. Also given a set $\Lambda_m$, let $\Bar{\Lambda}_m$ denote the set of all pairings such that for any $\gamma\in\Lambda_m$, there exists a pairing $\gamma'\in\Bar{\Lambda}_m$ such that either $\gamma_1=\gamma'_1$ or $\gamma_2=\gamma'_2$ but not both. 

We recall that $\omega_j=\omega_0+\eps\eta\Omega_j, \omega'_l=\omega_0+\eps\eta\Omega'_l$. Then the phases that appear in the operators of the type $L^{\eps,1}_{j,l}$ are of the form
\begin{equation}\nonumber
    \frac{g(\xi_j,k)}{\omega_j}-\frac{g(\zeta_l,k)}{\omega'_l}=-|k|^2\Big(\frac{1}{\omega_j}-\frac{1}{\omega'_l}\Big)+2k\cdot\Big(\frac{\xi_j}{\omega_j}-\frac{\zeta_l}{\omega'_l}\Big)=-\frac{\eps\eta|k|^2(\Omega'_{l}-\Omega_j)}{\omega_j\omega'_l}+2k\cdot\Big(\frac{\xi_j}{\omega_j}-\frac{\zeta_l}{\omega'_l}\Big)\approx 2k\cdot\Big(\frac{\xi_j}{\omega_j}-\frac{\zeta_l}{\omega'_l}\Big)\,.
\end{equation}
Similarly, let $\Bar{\omega}_{j,j'}=\frac{\omega_j+\omega_{j'}}{2}$. Then the phases that appear in the $L^{\eps,2}_{j,j'}$ are of the form
\begin{equation}\nonumber
     \frac{g(\xi_j,k)}{\omega_j}+\frac{g(\xi_{j'},-k)}{\omega_{j'}}=-|k|^2\Big(\frac{1}{\omega_j}+\frac{1}{\omega_{j'}}\Big)+2k\cdot\Big(\frac{\xi_j}{\omega_j}-\frac{\xi_{j'}}{\omega_{j'}}\Big)=-\frac{2|k|^2\Bar{\omega}_{j,j'}}{\omega_j\omega_{j'}}+2k\cdot\Big(\frac{\xi_j}{\omega_j}-\frac{\xi_{j'}}{\omega_{j'}}\Big)\,.
\end{equation}
So both these operators can be dealt with in a similar manner as in the single frequency case in~\cite{bal2024complex} after slight modifications. We use this to write down a decomposition for the solution operator $U^\eps$ to~\eqref{eqn:varphi_evol} with the appropriate modifications from~\cite[Theorem 4.2]{bal2024complex} presented in the Appendix.

\begin{theorem}\label{thm:N_pq}
\begin{equation}\nonumber
    U^\eps=N^\eps_{p,q}+E^\eps_{p,q}\,,
\end{equation}
where
\begin{equation}\nonumber
    N^\eps_{p,q}=U_\eta^{\frac{p+q}{2}}\sum_{m=1}^M\prod_{\gamma\in\Lambda_m}(U^\eps_\gamma-\mathbb{I})+U_\eta^{\frac{p+q}{2}}\,,
\end{equation}
and the error solves 
\begin{equation}\nonumber
    \partial_zE^\eps_{p,q}=L^\eps_{p,q}E^\eps_{p,q}+\mathcal{E}^\eps_{p,q}, \quad E^\eps_{p,q}(0)=0\,,
\end{equation}
with source
\begin{equation}\nonumber
    \mathcal{E}^\eps_{p,q}=U_\eta^{\frac{p+q}{2}}\sum_{m=1}^M\sum_{\gamma'\in\Bar{\Lambda}_m}L^{\eps,1}_{\gamma'}\prod_{\gamma\in\Lambda_m}(U^\eps_\gamma-\mathbb{I})+L^{\eps,2}N^\eps_{p,q}\,.
\end{equation}
Moreover, the error satisfies the bound
\begin{equation}\nonumber
   \sup_{0\le s\le z} \|E^\eps_{p,q}(s)\|\le c(p,q,z)\eps^{\frac13}\,.
\end{equation}
\end{theorem}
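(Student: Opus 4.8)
The plan is to establish the decomposition as an exact algebraic identity and then estimate the resulting source by oscillatory-integral methods, adapting the single-frequency argument of \cite[Theorem 4.2]{bal2024complex}. First I would verify by direct differentiation that the proposed main term satisfies
\begin{equation}\nonumber
\partial_z N^\eps_{p,q} = L^\eps_{p,q}N^\eps_{p,q} - \mathcal{E}^\eps_{p,q}, \qquad N^\eps_{p,q}(0)=\mathbb{I}.
\end{equation}
The initial condition is immediate since $U^\eps_\gamma(0)=\mathbb{I}$ annihilates every product $P_m:=\prod_{\gamma\in\Lambda_m}(U^\eps_\gamma-\mathbb{I})$ and $U_\eta(0)=\mathbb{I}$. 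Differentiating $N^\eps_{p,q}$, the factor $\partial_z U_\eta^{(p+q)/2}$ reproduces the $\tfrac{p+q}{2}L_\eta$ part of $L^\eps_{p,q}$, while the product rule applied to each $P_m$ produces terms $L^{\eps,1}_{\gamma^*}U^\eps_{\gamma^*}\prod_{\gamma\neq\gamma^*}(U^\eps_\gamma-\mathbb{I})$. Here I would use that the pairings in a fixed $\Lambda_m$ have pairwise disjoint index sets, so the operators $L^{\eps,1}_\gamma, U^\eps_\gamma$ with $\gamma\in\Lambda_m$ act on disjoint variables and commute; this lets me pull each $L^{\eps,1}_{\gamma^*}$ to the front and split $U^\eps_{\gamma^*}=(U^\eps_{\gamma^*}-\mathbb{I})+\mathbb{I}$. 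The full-product pieces reassemble $\sum_{(j,l)\in\Lambda_m}L^{\eps,1}_{j,l}P_m$, and the crucial telescoping is that the reduced-product pieces (one fewer factor) cancel exactly against those $L^{\eps,1}_{j,l}P_m$ in $L^\eps_{p,q}N^\eps_{p,q}$ for which $(j,l)$ is disjoint from $\Lambda_m$ — this is the inclusion--exclusion encoded by the $-\mathbb{I}$ in each factor, including the base case where $L^{\eps,1}_{j,l}$ acts on the $\mathbb{I}$ in $N^\eps_{p,q}$. What remains uncancelled is precisely the sum over adjacent pairings $\gamma'\in\Bar{\Lambda}_m$ (sharing exactly one index with $\Lambda_m$) together with all non-conjugate couplings in $L^{\eps,2}$, which is exactly the stated $\mathcal{E}^\eps_{p,q}$. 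Setting $E^\eps_{p,q}:=U^\eps-N^\eps_{p,q}$ and subtracting then yields, by linearity, the claimed inhomogeneous equation with zero initial data.

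Next I would solve this equation by Duhamel, writing $E^\eps_{p,q}(z)=\int_0^z U^\eps(z,s)\,\mathcal{E}^\eps_{p,q}(s)\,\mathrm{d}s$ with $U^\eps(z,s)$ the propagator of $L^\eps_{p,q}$, and bound $\|U^\eps(z,s)\|\le e^{\mathfrak{c}(z-s)/\eta^2}$ with $\mathfrak{c}=R(0)(p+q)^2\omega_{\max}^2/8$ via Lemma~\ref{lemma:phi_estim}. This reduces the target bound $\sup_{s\le z}\|E^\eps_{p,q}(s)\|\le c(p,q,z)\eps^{1/3}$ to showing that $\|\mathcal{E}^\eps_{p,q}(s)\|$ carries a power of $\eps$ large enough to overcome the growth factor $e^{\mathfrak{c}z/\eta^2}$.

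The core of the argument — and the main obstacle — is estimating the two families of terms in $\mathcal{E}^\eps_{p,q}$, both of which gain smallness from the rapidly oscillating kernels $e^{\frac{i\eta s}{2\eps}(\cdots)}$ integrated against $\hat{R}(k)\,\mathrm{d}k$ and a finite measure. For the non-conjugate terms $L^{\eps,2}N^\eps_{p,q}$, the phase $\tfrac{g(\xi_j,k)}{\omega_j}+\tfrac{g(\xi_{j'},-k)}{\omega_{j'}}=-\tfrac{2|k|^2\Bar{\omega}_{j,j'}}{\omega_j\omega_{j'}}+2k\cdot(\tfrac{\xi_j}{\omega_j}-\tfrac{\xi_{j'}}{\omega_{j'}})$ is nondegenerate quadratic in $k$, so a stationary-phase estimate yields decay of order $(\eps/\eta s)^{d/2}$. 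For the adjacent terms, the shared index forces the relative momentum off the stationary diagonal $\xi_j/\omega_j=\zeta_l/\omega'_l$ on which the surviving pairings concentrate, so the phase is non-stationary and integration by parts — using the weighted bounds $\|\langle v_j\rangle^2\hat{\phi}^\eps\|$ of Lemma~\ref{lemma:phi_estim} to control amplitudes and tails — again produces a positive power of $\eps/\eta$. The multi-frequency corrections are controlled perturbations: the extra phase $-\tfrac{\eps\eta|k|^2(\Omega'_l-\Omega_j)}{\omega_j\omega'_l}$ in the conjugate kernels contributes $O(\eta^2)$ to the total phase and the weights satisfy $\Bar{\omega}_{j,j'}/(\omega_j\omega_{j'})=\omega_0^{-1}+O(\eps\eta)$, so the stationary points and Hessians agree with the single-frequency case up to negligible errors, and the estimates of \cite{bal2024complex} transfer with minor modifications.

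The delicate point is the competition of scales. Each oscillatory gain is only a fixed power $\eps^{a}\eta^{-b}$, while the operator norms contribute factors $\eta^{-2}$ and the propagator contributes $e^{\mathfrak{c}z/\eta^2}$. The separation of scales $\eta=(\log|\log\eps|)^{-1}$ is exactly what makes this work: since $\eta^{-2}=(\log\log\tfrac{1}{\eps})^2$ grows far slower than $\log\tfrac{1}{\eps}$, one has $\eps^a e^{\mathfrak{c}/\eta^2}\to 0$ for every $a>0$, so the net estimate retains a genuine power of $\eps$. Splitting the $k$-integrals into a near-stationary region and its complement and optimizing the split against these competing factors produces the stated exponent $\eps^{1/3}$, completing the bound. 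I expect the bookkeeping of this scale competition, carried out uniformly over the $O_{p,q}(1)$ many pairing families $\Lambda_m$ and $\Bar{\Lambda}_m$, to be the most technical part of the argument.
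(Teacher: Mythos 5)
Your algebraic derivation of the evolution equation for $E^\eps_{p,q}$ (commuting operators on disjoint index sets, splitting $U^\eps_{\gamma}=(U^\eps_\gamma-\mathbb{I})+\mathbb{I}$, and identifying the uncancelled adjacent pairings $\bar\Lambda_m$ and the $L^{\eps,2}$ contributions as the source) is correct and is exactly what the paper imports from \cite[Theorem 4.2]{bal2024complex}. The scale-competition remark at the end — that $\eps^a e^{\mathfrak{c}/\eta^2}\to0$ for every $a>0$ under $\eta=(\log|\log\eps|)^{-1}$ — is also the right closing observation.

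The gap is in the core analytic step. You reduce the problem to showing that $\|\mathcal{E}^\eps_{p,q}(s)\|$ is small \emph{pointwise in $s$}, and you propose to obtain this smallness by stationary phase (for $L^{\eps,2}$) and integration by parts (for the adjacent $L^{\eps,1}$ terms) in the momentum variable $k$. Neither step works in the setting of the theorem. First, at fixed $s$ there is no smallness to be had: on $\mathcal{M}_B(\mathbb{R}^{(p+q)d})$ with the TV norm one has the sharp bound $\|L^{\eps,2}_{j,j'}\psi\|\le \frac{\omega_j\omega_{j'}}{4\eta^2}\|\hat R\|_1\|\psi\|$ with equality attained on Dirac masses, since the TV norm sees only the modulus of the oscillatory kernel; so $\|\mathcal{E}^\eps_{p,q}(s)\|=O(\eta^{-2})$ carries no power of $\eps$. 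Second, stationary phase or integration by parts in $k$ requires smoothness of the amplitude $\psi(\xi_j-k,\xi_{j'}+k)\hat R(k)$, but $\psi$ here is a finite measure — indeed the initial data $\hat\phi^\eps_{p,q}(0)$ is a product of approximate Dirac masses of width $\eps$ — so the derivative losses destroy any oscillatory gain. The mechanism the paper actually uses (Propositions~\ref{prop:L_1_bound} and~\ref{prop:L_2_bound}) is different: the phase is \emph{linear in $s$}, so one integrates in the axial variable first, bounding $\big|\int_{z'}^z e^{\frac{i\eta s}{\eps}\,k\cdot(k+w)}\,\mathrm{d}s\big|$ by $\min\big(z,\,C\eps/(\eta|k\cdot(k+w)|)\big)$ uniformly in the measure $\psi$ (pulled out in TV norm via a supremum over $w$), and only then integrates this singular factor against $\hat R(k_1)\hat R(k_2)\,\mathrm{d}k_1\mathrm{d}k_2$ to produce the $\frac{\eps}{\eta}|\log\frac{\eps}{\eta}|$-type gains (and $\sqrt{\eps/\eta}$ for $L^{\eps,2}$ in $d=1$, which is where the exponent $\frac13<\frac12$ comes from). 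Accordingly, the quantity that is small is $\sup_{0\le z'\le z}\big\|\int_{z'}^z\mathcal{E}^\eps_{p,q}(s)\,\mathrm{d}s\big\|$, not $\|\mathcal{E}^\eps_{p,q}(s)\|$, and your Duhamel step must be reorganized (e.g.\ by an integration by parts in $s$ against the propagator) to exploit the time-integrated bound rather than a pointwise one.
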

As in \cite{bal2024complex}, $\eps^{\frac13}$ may be replaced here and below by any $\eps^\alpha$ with $\alpha<\frac12$ when $d=1$ and $\alpha<1$ when $d\geq2$.
\begin{proof}
   That the error ${E}^\eps_{p,q}$ solves the evolution equation as above follows the same steps as in the proof of~\cite[Theorem 4.2]{bal2024complex}. We only have to verify that the source term $\mathcal{E}^\eps_{p,q}$ is small as an operator on $\mathcal{M}_B(\mathbb{R}^{(p+q)d})$ after integrating in $z$. These are shown in Propositions~\ref{prop:L_1_bound} and~\ref{prop:L_2_bound} in Appendix~\ref{sec:Appendix}.
\end{proof}

\subsection{Gaussian summation rule in all variables}
Note that the phase compensated moments $\Psi^\eps_{p,q}$~\eqref{eqn:Psi_pq_def} and $\varphi^\eps:=\varphi^\eps_{p,q}$ are related as
\begin{equation}\nonumber
    \Psi^\eps_{p,q}(z_0,\cdots,z_0,v;\omega,\sk)=\varphi^\eps_{p,q}(z_0,v;\omega,\sk)e^{\frac{R(0)z_0}{8\eta^2}(\sum_{j=1}^p\omega_j^2+\sum_{l=1}^q{\omega'_l}^2)}\,.
\end{equation}
From Lemma~\ref{lem:phase_comp_pq}, this gives the $p+q$th moment for different $z$s
\begin{equation}\nonumber
\begin{aligned}
    &\hat{\mu}^\eps_{p,q}(z_1,\cdots,z_p,z'_1,\cdots,z'_q,v;\omega,\sk)\\
    &=\varphi^\eps_{p,q}(z_{0},v;\omega,\sk)e^{-\frac{i\eta}{2\eps}\big(\sum_{j=1}^p\frac{z_j|\xi_j|^2}{\omega_j}-\sum_{l=1}^q\frac{z'_l|\zeta_l|^2}{\omega'_l}\big)}e^{-\frac{R(0)}{8\eta^2}(\sum_{j=1}^p(z_j-z_{0})\omega_j^2+\sum_{l=1}^q(z'_l-z_{0}){\omega'}^2_l)}\\
    &+E^\eps_2(z,v;\omega,\sk)e^{-\frac{R(0)}{8\eta^2}(\sum_{j=1}^pz_j\omega_j^2+\sum_{l=1}^qz'_l{\omega'}^2_l)}e^{-\frac{i\eta}{2\eps}\big(\sum_{j=1}^p\frac{z_j|\xi_j|^2}{\omega_j}-\sum_{l=1}^q\frac{z'_l|\zeta_l|^2}{\omega'_l}\big)}\,.
\end{aligned}
    \end{equation}
    The latter term is $O(\eps^\alpha)$, $\alpha\in(0,1)$ in the total variation sense from Lemma~\ref{lem:phase_comp_pq}. Using the decomposition in Theorem~\ref{thm:N_pq}, we have
    \begin{equation}\nonumber
    \begin{aligned}
         &\hat{\mu}^\eps_{p,q}(z_1,\cdots,z_p,z'_1,\cdots,z'_q,v;\omega,\sk)=\\
         &N^\eps_{p,q}(z_{0})\hat{\varphi}^\eps(0,v;\omega,\sk)e^{-\frac{i\eta}{2\eps}\big(\sum_{j=1}^p\frac{z_j|\xi_j|^2}{\omega_j}-\sum_{l=1}^q\frac{z'_l|\zeta_l|^2}{\omega'_l}\big)}e^{-\frac{R(0)}{8\eta^2}(\sum_{j=1}^p(z_j-z_{0})\omega_j^2+\sum_{l=1}^q(z'_l-z_{0}){\omega'}^2_l)}+E^\eps_3(z,v)+O(\eps^\alpha)\,,
    \end{aligned}
    \end{equation}
    with $\|E^\eps_3\|\le c(p,q,z_0)\eps^{\frac13}$. Inverse Fourier transforming this eventually leads us to the Gaussian summation rule for the macroscopic wavefield $\upsilon^\eps(h,x;\Omega,\kappa)=u^\eps\big(z_0+\eps\eta h,\frac{r}{\eps}+\eta x;\omega_0+\eps\eta\Omega,\sk_0+\eps\kappa\big)$. 
    
    \begin{lemma}[Higher moments of the wavefield]\label{Lemma:hat_mu_pq_dec}
        Under the scaling~\eqref{eqn:scalings}, the $p+q$th moment of the wavefield $u^\eps$ in the Fourier domain can be decomposed as
        \begin{equation}\nonumber
            \hat\mu^\eps_{p,q}(z,x;\omega,\sk)=\mathbb{E}\prod_{j=1}^p\hat u^\eps(z,x;\omega_j,\sk_j)\prod_{l=1}^q\hat u^{\eps\ast}(z'_l,y_l;\omega'_l,\sk'_l)=\tilde{\mathcal{N}}^\eps_{p,q}(z,v;\omega,\sk)\Phi^\eps(z,v;\omega)+\mathcal{E}^\eps_{p,q}\,,
        \end{equation}
        where
        \begin{equation}\label{eqn:N_tilde_pq}
            \begin{aligned}
                \tilde{\mathcal{N}}^\eps_{p,q}=&\Big(\prod_{\gamma\in\Lambda_m}\big(\hat\mu^\eps_{1,1}(z_{0},z_{0},\xi_{\gamma_1},\zeta_{\gamma_2};\omega_{\gamma_1},\omega'_{\gamma_2},\sk_{\gamma_1},\sk'_{\gamma_2})-\hat\mu^\eps_{1,0}(z_{0},\xi_{\gamma_1};\omega_{\gamma_1},\sk_{\gamma_1})\hat\mu^{\eps\ast}_{1,0}(z_{0},\zeta_{\gamma_2};\omega'_{\gamma_2},\sk'_{\gamma_2})\big)\Big)\\
                &\prod_{j\neq\gamma_1,l\neq \gamma_2}\hat\mu^\eps_{1,0}(z_{0},\xi_j;\omega_j,\sk_j)\hat\mu^{\eps\ast}_{1,0}(z_{0},\zeta_l;\omega'_l,\sk'_l)+\prod_{j=1}^p\hat\mu^\eps_{1,0}(z_{0},\xi_j;\omega_j,\sk_j)\prod_{l=1}^q\hat\mu^{\eps\ast}_{1,0}(z_{0},\zeta_l;\omega'_l,\sk'_l)\,,
            \end{aligned}
        \end{equation}
        \begin{equation}\nonumber
            \begin{aligned}
                \Phi^\eps(z,v;\omega)&=e^{-\frac{i\eta}{2\eps}(\sum_{j=1}^p(z_j-z_{0})|\xi_j|^2/\omega_j-\sum_{l=1}^q(z'_l-z_{0})|\zeta_l|^2/\omega'_l)}e^{-\frac{R(0)}{8\eta^2}(\sum_{j=1}^p(z_j-z_{0})\omega_j^2+\sum_{l=1}^q(z'_l-z_{0}){\omega'_l}^2)}
            \end{aligned}
        \end{equation}
        and 
        \begin{equation}\nonumber
            \|\mathcal{E}_{p,q}^\eps(z,v;\omega,\sk)\|\le c(p,q,z,\omega,\sk)\eps^\frac13\,.
        \end{equation}
    \end{lemma}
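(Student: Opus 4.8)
The plan is to read off the statement from the operator-level identity assembled immediately before the lemma and then perform a purely algebraic identification. Recall that Lemma~\ref{lem:phase_comp_pq} and Theorem~\ref{thm:N_pq} already give
\begin{equation*}
\hat\mu^\eps_{p,q} = N^\eps_{p,q}(z_0)\,\hat\varphi^\eps(0,v;\omega,\sk)\,e^{-\frac{i\eta}{2\eps}\big(\sum_{j}z_j|\xi_j|^2/\omega_j-\sum_{l}z'_l|\zeta_l|^2/\omega'_l\big)}e^{-\frac{R(0)}{8\eta^2}\big(\sum_j(z_j-z_0)\omega_j^2+\sum_l(z'_l-z_0){\omega'_l}^2\big)}+E^\eps_3+O(\eps^\alpha),
\end{equation*}
with $\|E^\eps_3\|\le c\,\eps^{1/3}$. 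Hence the only genuine task is to evaluate $N^\eps_{p,q}(z_0)\hat\varphi^\eps(0,v)$ and match it, up to an explicit global phase, with $\tilde{\mathcal N}^\eps_{p,q}$ in \eqref{eqn:N_tilde_pq}.

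First I would exploit two structural facts. The initial datum $\hat\varphi^\eps(0,v)=\prod_{j=1}^p\hat u_0^\eps(\xi_j-\sk_j)\prod_{l=1}^q\hat u_0^{\eps\ast}(\zeta_l-\sk'_l)$ is a tensor product over the single-field variables, and in $N^\eps_{p,q}=U_\eta^{(p+q)/2}\sum_m\prod_{\gamma\in\Lambda_m}(U^\eps_\gamma-\mathbb I)+U_\eta^{(p+q)/2}$ the scalar prefactor $U_\eta^{(p+q)/2}(z_0)=e^{-\frac{R(0)z_0}{8\eta^2}(\sum_j\omega_j^2+\sum_l{\omega'_l}^2)}$ commutes with every $U^\eps_\gamma$, while each $U^\eps_\gamma$ with $\gamma=(j,l)$ acts only on the pair $(\xi_j,\zeta_l)$. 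Since the operator for the phase-compensated two-point moment splits as $L_\eta+L^{\eps,1}_{1,1}$ with $L_\eta$ scalar, one has $\varphi^\eps_{1,1}(z_0,\xi_j,\zeta_l)=(U_\eta U^\eps_{(j,l)})(z_0)\big[\hat u_0^\eps(\xi_j-\sk_j)\hat u_0^{\eps\ast}(\zeta_l-\sk'_l)\big]$, and the identity part, dressed with its share of the decay, reproduces $\varphi^\eps_{1,0}(z_0,\xi_j)\varphi^{\eps\ast}_{1,0}(z_0,\zeta_l)$. Distributing the scalar $U_\eta^{(p+q)/2}$ across paired and unpaired factors is legitimate because its exponent is additive in the frequencies, so $N^\eps_{p,q}(z_0)\hat\varphi^\eps(0,v)$ becomes exactly the pairing sum over the $\Lambda_m$, with the connected factor $\varphi^\eps_{1,1}-\varphi^\eps_{1,0}\varphi^{\eps\ast}_{1,0}$ attached to each pair and $\varphi^\eps_{1,0}$ on the unpaired legs.

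Next I would convert the phase-compensated moments back to the genuine Fourier moments using $\varphi^\eps_{1,1}(z_0,\xi,\zeta)=\hat\mu^\eps_{1,1}(z_0,z_0,\xi,\zeta)\,e^{\frac{i\eta z_0}{2\eps}(|\xi|^2/\omega-|\zeta|^2/\omega')}$ and $\varphi^\eps_{1,0}(z_0,\xi)=\hat\mu^\eps_{1,0}(z_0,\xi)\,e^{\frac{i\eta z_0|\xi|^2}{2\eps\omega}}$. The key bookkeeping observation is that each leg — whether inside a connected pair or unpaired — contributes precisely the factor $e^{-\frac{i\eta z_0}{2\eps}|\xi_j|^2/\omega_j}$ or $e^{+\frac{i\eta z_0}{2\eps}|\zeta_l|^2/\omega'_l}$, so the accumulated phase is the same single global factor $e^{+\frac{i\eta z_0}{2\eps}(\sum_j|\xi_j|^2/\omega_j-\sum_l|\zeta_l|^2/\omega'_l)}$ for every term of the pairing sum. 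Multiplying this against the oscillatory factor in the displayed expansion replaces $z_j$ by $z_j-z_0$ in the exponent, reassembling exactly $\Phi^\eps$, and leaving $\tilde{\mathcal N}^\eps_{p,q}$ expressed in the genuine moments $\hat\mu^\eps_{1,1}$ and $\hat\mu^\eps_{1,0}$ as in \eqref{eqn:N_tilde_pq}.

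Finally I would set $\mathcal E^\eps_{p,q}=E^\eps_3+O(\eps^\alpha)$. The bound $\|\mathcal E^\eps_{p,q}\|\le c\,\eps^{1/3}$ follows because $U^\eps-N^\eps_{p,q}$ is $O(\eps^{1/3})$ in operator norm by Theorem~\ref{thm:N_pq}, the factorized datum has $\eps$-uniform total variation norm $\prod\|\hat u_0\|_1$, the residual oscillatory and decaying factors have modulus at most one, and the $O(\eps^\alpha)$ term from Lemma~\ref{lem:phase_comp_pq} is dominated once $\alpha\ge\tfrac13$. The hard part will be the combinatorial and phase bookkeeping in the middle two steps: one must check that the single scalar $U_\eta^{(p+q)/2}$ splits into exactly the per-leg decays carried by the individual $\varphi^\eps_{1,1}$ and $\varphi^\eps_{1,0}$, and that the oscillatory phases recombine term-by-term over the entire pairing sum into one global phase with no residual cross terms — an error in tracking which variables each $U^\eps_\gamma$ touches would corrupt precisely this cancellation.
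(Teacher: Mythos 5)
Your proposal is correct and follows essentially the same route as the paper: the paper obtains this lemma directly from the chain of identities displayed just before it (the relation $\Psi^\eps_{p,q}(z_0,\dots,z_0)=\varphi^\eps_{p,q}(z_0)e^{\frac{R(0)z_0}{8\eta^2}\sum\omega^2}$, Lemma~\ref{lem:phase_comp_pq} to reduce to a common axial position, and Theorem~\ref{thm:N_pq} to replace $U^\eps$ by $N^\eps_{p,q}$), and then identifies $N^\eps_{p,q}(z_0)\hat\varphi^\eps(0,v)$ with $\tilde{\mathcal N}^\eps_{p,q}$ after phase recompensation, exactly as you do. Your middle two steps merely make explicit the per-leg splitting of $U_\eta^{(p+q)/2}$ and the recombination of phases into $\Phi^\eps$, which the paper leaves implicit.
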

    We then have the following estimate in the physical variables.
\begin{theorem}\label{thm:Gaussian_rule_phys}
 Let $h=(h_1,\cdots,h_p,h'_1,\cdots,h'_q)\in\mathbb{R}^{p+q}$, $x=(x_1,\cdots,x_p,y_1,\cdots,y_q)\in\mathbb{R}^{(p+q)d}$, $\Omega=(\Omega_1,\cdots,\Omega_p,\Omega'_1,\cdots,\Omega'_q)\in\mathbb{R}^{p+q}$, and $\kappa=(\kappa_1,\cdots,\kappa_p,\kappa'_1,\cdots,\kappa'_q)\in\mathbb{R}^{(p+q)d}$. For fixed $(z_0,r;\omega_0,\sk_0)$ define 
     \begin{equation}\label{eqn:m_eps_pq}
         m^\eps_{p,q}(h,x;\Omega,\kappa)=\mathbb{E}\prod_{j=1}^p\upsilon^\eps(h_j,x_j;\Omega_j,\kappa_j)\prod_{l=1}^q\upsilon^{\eps\ast}(h'_l,y_l;\Omega'_l,\kappa'_l)\,.
     \end{equation}
Let $h_{j,l}=h_j-h'_l$, $\tau_{j,l}=x_j-y_l$, $\Omega_{j,l}=\Omega'_l-\Omega_j$ and $\kappa_{j,l}=\kappa_j-\kappa_l$. We have
     \begin{equation}\label{eqn:m_eps_pq_decomp}
         \begin{aligned}
             m^\eps_{p,q}(h,x;\Omega,\kappa)=\mathcal{N}^\eps_{p,q}+\mathscr{E}_{p,q},
         \end{aligned}
     \end{equation}
     where
     \begin{equation}\nonumber
     \begin{aligned}
                  \mathcal{N}^\eps_{p,q}=\begin{cases}
                      0,\quad p\neq q\\
                      \sum_{\pi_{p}}\prod_{j=1}^pm^\eps_{1,1}(h_{j,\pi_p(j)},\tau_{j,\pi_p(j)};\Omega_{j,\pi_p(j)},\kappa_{j,\pi_p(j)})
                  \end{cases}\,.
                  \end{aligned}
     \end{equation}
     Here, $\pi_{p}$ denotes a permutation of integers $\{1,\cdots,p\}$ without replacement and $m^\eps_{1,1}$ is given by Proposition~\ref{prop:m_11_expn}. For $z_{\max}=\max\{z_j,z'_l\}, \omega_{\max}=\max\{\omega_j,\omega'_l\}$, the error $\mathscr{E}_{p,q}^\eps$ is bounded uniformly in $x$ as
        \begin{equation}\nonumber
         \|\mathscr{E}^\eps_{p,q}(z,x;\omega,\sk)\|_\infty\le c_1(p,q,z_{\max},\omega_{\max})(\eps^\frac13+e^{-\frac{c_2(z_{\max},\omega_{\max})}{\eta^2}})\,.
        \end{equation}
           \begin{proof}
         Inverse Fourier transforming the decomposition in Lemma~\ref{Lemma:hat_mu_pq_dec}, we have
         \begin{equation}\nonumber
         \begin{aligned}
         &m^\eps_{p,q}(z,r,h,x;\omega_0,\sk_0,\Omega,\kappa)\\
         &=\int_{\mathbb{R}^{(p+q)d}}\tilde{\mathcal{N}}^\eps_{p,q}(z,v;\omega,\sk)\Phi^\eps(z,v;\omega)e^{i[\sum_{j=1}^p\xi_j\cdot(\eps^{-1}r+\eta x_j)-\sum_{l=1}^q\zeta_l\cdot(\eps^{-1}r+\eta y_l)}]\frac{\mathrm{d}v}{(2\pi)^{(p+q)d}}+\tilde{\mathcal{E}}^\eps_{p,q}\,,    
         \end{aligned}
         \end{equation}
         where $\tilde{\mathcal{E}}^\eps_{p,q}$ is the inverse Fourier transform of $\mathcal{E}^\eps_{p,q}$ defined in the same manner. Due to the total variation bound in Lemma~\ref{Lemma:hat_mu_pq_dec}, $\tilde{\mathcal{E}}^\eps_{p,q}$ is uniformly bounded as in the statement of the Theorem. From~\eqref{eqn:mu10}, the Fourier transform $\|\hat{\mu}^\eps_{1,0}(z)\|\le \|\hat{\mu}^\eps_{1,0}(0)\|e^{-\frac{k_0^2R(0)z}{8\eta^2}}$. Due to the uniform boundedness of $\hat{\mu}^\eps_{1,1}$ in the total variation norm, the terms in~\eqref{eqn:N_tilde_pq} retaining first order moments are smaller that $c(p,q)e^{-\frac{k_0^2R(0)z}{8\eta^2}}$. So in the leading term, all the terms are exponentially small if $p\neq q$ and when $p=q$, we only have to consider the contributions when $|m|=q=p$.

         When $p=q$, for the leading part, we have
         \begin{equation}\label{eqn:m_pq_eps_2}
\begin{aligned}\sum_{\pi_p}\int_{\mathbb{R}^{(p+q)d}}&\prod_{j=1}^p\hat\mu^\eps_{1,1}(z_0,\xi_{j},\zeta_{\pi_p(j)};\omega_{j},\omega'_{\pi_p(j)},\sk_{j},\sk_{\pi_p(j)})\\
                &\times\exp\Big[{-\frac{i\eta^2}{2}\Big(\frac{h_{j}|\xi_{j}|^2}{\omega_{j}}-\frac{h'_{\pi_p(j)}|\zeta_{\pi_p(j)}|^2}{\omega'_{\pi_p(j)}}\Big)}\Big]e^{\frac{ir}{\eps}\cdot(\xi_{j}-\zeta_{\pi_p(j)})}e^{i\eta(x_{j}\cdot\xi_{\gamma_1}-y_{\gamma_2}\cdot\zeta_{\pi_p(j)})}\frac{\mathrm{d}v}{(2\pi)^{(p+q)d}}\,.
             \end{aligned}
         \end{equation}  
Due to the product structure, the integrals over the dual variables can be distributed in pairs. The rest of the derivation then parallels the proof in Proposition~\ref{prop:m_11_expn}. 
              \end{proof}
 \end{theorem}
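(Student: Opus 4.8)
The plan is to transfer the Fourier-domain Gaussian summation rule of Lemma~\ref{Lemma:hat_mu_pq_dec} to the physical variables by inverse Fourier transformation, identify the surviving contributions, and reduce each of them to a product of two-point functions via the single-pair computation of Proposition~\ref{prop:m_11_expn}. Inverse transforming the decomposition $\hat\mu^\eps_{p,q}=\tilde{\mathcal{N}}^\eps_{p,q}\Phi^\eps+\mathcal{E}^\eps_{p,q}$ yields a leading term together with the inverse transform $\tilde{\mathcal{E}}^\eps_{p,q}$ of the remainder. Since the inverse Fourier transform of a finite signed measure is bounded in $\sL^\infty$ by its total variation, the bound $\|\mathcal{E}^\eps_{p,q}\|\le c\,\eps^{1/3}$ from that lemma controls $\tilde{\mathcal{E}}^\eps_{p,q}$ uniformly in $x$, contributing the $\eps^{1/3}$ part of the stated error.

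First I would eliminate the partial-pairing contributions. In the explicit form~\eqref{eqn:N_tilde_pq}, $\tilde{\mathcal{N}}^\eps_{p,q}$ is a sum of products in which some indices are joined into paired factors $\hat\mu^\eps_{1,1}-\hat\mu^\eps_{1,0}\hat\mu^{\eps\ast}_{1,0}$ while the rest remain as unpaired first-order factors $\hat\mu^\eps_{1,0}$. From~\eqref{eqn:mu10} each surviving first-order factor carries the damping $e^{-k_0^2R(0)z/8\eta^2}$, which in the diffusive limit decays faster than any power of $\eta$; combined with the uniform total-variation boundedness of $\hat\mu^\eps_{1,1}$, any term retaining at least one unpaired first-order moment is $O(e^{-c/\eta^2})$ in $\sL^\infty$. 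When $p\ne q$ every term necessarily leaves such an unpaired factor, so the whole leading part is exponentially small and $\mathcal{N}^\eps_{p,q}=0$ in the limit. When $p=q$ only the maximal pairings survive, giving the permutation sum~\eqref{eqn:m_pq_eps_2}.

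It then remains, for $p=q$ and each fixed permutation $\pi_p$, to factor the $2pd$-dimensional integral in~\eqref{eqn:m_pq_eps_2} into $\prod_{j=1}^p m^\eps_{1,1}(h_{j,\pi_p(j)},\tau_{j,\pi_p(j)};\Omega_{j,\pi_p(j)},\kappa_{j,\pi_p(j)})$. The key point is that the integrand is a genuine product over the pairs $\gamma=(j,\pi_p(j))$: the factor $\hat\mu^\eps_{1,1}(z_0,\xi_j,\zeta_{\pi_p(j)};\cdots)$ and the accompanying exponentials each involve only the single pair $(\xi_j,\zeta_{\pi_p(j)})$. Within each pair I would apply the change of variables $(\xi_j,\zeta_{\pi_p(j)})=(\tilde\xi/\eta+\eps\tilde\zeta/2,\ \tilde\xi/\eta-\eps\tilde\zeta/2)$ of Proposition~\ref{prop:m_11_expn}; this converts the fast oscillation $e^{ir\cdot(\xi_j-\zeta_{\pi_p(j)})/\eps}$ into the slow $e^{ir\cdot\tilde\zeta}$, turns the transverse phase into $e^{i\tilde\xi\cdot\tau_{j,\pi_p(j)}}$ up to a factor $e^{i\eps\eta\,\tilde\zeta\cdot(x_j+y_{\pi_p(j)})/2}=1+O(\eps)$, and reduces the quadratic axial phase to $e^{-ih_{j,\pi_p(j)}|\tilde\xi|^2/2\omega_0}$ after replacing $\omega_j,\omega'_{\pi_p(j)}$ by $\omega_0$. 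Each single-pair integral is then exactly the expression of Proposition~\ref{prop:m_11_expn} at the difference arguments and common $(z_0,r,\omega_0,\sk_0)$. Because the variables of distinct pairs are disjoint, Fubini turns the full integral into the product, producing $\mathcal{N}^\eps_{p,q}$.

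The main obstacle is the uniform propagation of errors through the factorization. Each single-pair reduction incurs the $O(\eps^\alpha)$ corrections of Proposition~\ref{prop:m_11_expn}---from approximating the quadratic phases, from the residual $e^{i\eps\eta\,\tilde\zeta\cdot(\cdots)/2}$, and from replacing $\omega_{j},\omega'_{l}$ by $\omega_0$---and one must ensure that multiplying $p$ such factors does not amplify the error beyond the stated order. Writing each pair factor as $m^\eps_{1,1}+O(\eps^\alpha)$ and expanding the product, every cross term contains at least one error factor multiplied by uniformly bounded two-point functions (bounded in $\sL^\infty$ via Proposition~\ref{prop:m_11_expn} and in weighted total variation via Lemma~\ref{lemma:phi_estim} specialized to $(p,q)=(1,1)$), so a telescoping (hybrid) estimate over the $p$ pairs keeps the accumulated error at $O(\eps^\alpha)$ with $\alpha\in(0,1)$. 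Collecting $\tilde{\mathcal{E}}^\eps_{p,q}$, the exponentially small unpaired contributions, and these factorization errors yields the bound $\|\mathscr{E}^\eps_{p,q}\|_\infty\le c_1(\eps^{1/3}+e^{-c_2/\eta^2})$.
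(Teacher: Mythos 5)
Your proposal is correct and follows essentially the same route as the paper: inverse Fourier transforming the decomposition of Lemma~\ref{Lemma:hat_mu_pq_dec}, controlling $\tilde{\mathcal{E}}^\eps_{p,q}$ in $\sL^\infty$ via the total variation bound, discarding all partially paired terms (and the whole leading part when $p\neq q$) through the exponential damping of $\hat\mu^\eps_{1,0}$, and then factoring the surviving fully paired integrals pair by pair with the change of variables of Proposition~\ref{prop:m_11_expn}. Your explicit tracking of the residual phase $e^{i\eps\eta\tilde\zeta\cdot(x_j+y_{\pi_p(j)})/2}$ and the telescoping estimate for the accumulated $O(\eps^\alpha)$ errors simply spell out steps the paper leaves implicit in its closing sentence.
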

   \begin{proof}[Proof of Theorem~\ref{thm:finite_dim_mom}]
    Note that all the statistical moments of the random vector $\Upsilon^\eps$ are given by moments for the form~\eqref{eqn:m_eps_pq}.  From Theorem~\ref{thm:Gaussian_rule_phys} and Corollary~\ref{coro:m_11_eps_limit}, we have
       \begin{equation}\nonumber
           \lim_{\eps\to 0}m^\eps_{p,q}=\begin{cases}
               0,\quad p\neq q\\
               \sum_{\pi_p}\prod_{j=1}^pm_{1,1}(h_{j,\pi_p(j)},\tau_{j,\pi_p(j)};\Omega_{j,\pi_p(j)},\kappa_{j,\pi_p(j)}),\quad p=q\,,
           \end{cases}
       \end{equation}
       with $m_{1,1}$ given by~\eqref{eqn:m_11}. The limiting moments of $\Upsilon^\eps$ are thus equal to the corresponding statistical moments of the random vector $\Upsilon$. Due to the Carleman criterion~\cite{billingsley2017probability}, $\Upsilon$ is uniquely described through its statistical moments, and we have that $\Upsilon^\eps\Rightarrow\Upsilon$ as $\eps\to 0$.
   \end{proof}
\section{Proof of stochastic continuity and tightness of the field}
\label{sec:tight}
We first show the following regularity estimate which will be used to prove tightness.
\begin{lemma}\label{lemma:M_eps_11_reg}
    Let $M^\eps_{1,1}(z,r,\omega_0,\sk_0;\tau,\Omega,\kappa)$ be the solution to the PDE~\eqref{eqn:M_11_eps}. Then for $p\ge 1$ and $(z,r,\omega_0,\sk_0,\Omega,\kappa)$ in bounded subsets of $\mathbb{R}^{3d+3}$,
    \begin{equation}\nonumber
\|\langle \xi\rangle^{2p} \hat{M}^\eps_{1,1}(z)\|\le C_p(\omega_0,z,\sk_0)\|\langle \xi\rangle^{2p} \hat{M}^\eps_{1,1}(0)\|\,,
    \end{equation}
    where the total variation norm is w.r.t $(\zeta,\xi)$, the dual variables to $(r,\tau)$ respectively. The constant $C_p$ is independent of $\eps$.
    \begin{proof}
        As in the proof of Lemma~\ref{lemma:M_11}, the Feynman-Kac formula allows us to represent the solution $M^\eps_{1,1}$ in the Fourier domain as 
        \begin{equation}\nonumber
            \hat{M}_{1,1}^\eps(z,\zeta,\xi)=\mathbb{E}[\hat{M}^\eps_{1,1}(0,\zeta,\chi^\eta(0))e^{i\int_0^z{V}(\zeta,\chi^\eta(s))\mathrm{d}s}|\chi^\eta(z)=\xi],\quad {V}(\zeta,\xi)=\frac{\Omega|\xi|^2}{2\omega_0}+\frac{\xi\cdot\zeta}{\omega_0}\,.
        \end{equation}
        This gives the bound
        \begin{equation}\nonumber
            |\hat{M}_{1,1}^\eps(z,\zeta,\xi)|\le\mathbb{E}[|\hat{M}^\eps_{1,1}(0,\zeta,\chi^\eta(0))||\chi^\eta(z)=\xi]:=\hat{N}^\eps_{1,1}(z,\zeta,\xi)\,.
        \end{equation}
        The right hand side is the solution to the equation
        \begin{equation}\nonumber
            \begin{aligned}
        \partial_z\hat{N}^\eps_{1,1}&=\frac{\omega_0^2}{4\eta^2}\int_{\mathbb{R}^d}\hat{R}(k)[\hat{N}^\eps_{1,1}(z,\zeta,\xi-\eta k)-\hat{N}^\eps_{1,1}(z,\zeta,\xi)]\frac{\mathrm{d}k}{(2\pi)^d}\\
                \hat{N}^\eps_{1,1}(0,\cdot)&=|\hat{M}^\eps_{1,1}(0,\cdot)|\,.
            \end{aligned}
        \end{equation}
     Note that $\hat{N}^\eps_{1,1}\ge 0$ and $\|\hat{N}^\eps_{1,1}(z,\cdot)\|=\|\hat{N}^\eps_{1,1}(0,\cdot)\|$, so $\hat{N}^\eps_{1,1}$ is a probability density (after normalization). 
Suppose $p=1$.     This gives us the bound
        \begin{equation}\nonumber
            \|\langle\xi\rangle^2\hat{N}^\eps_{1,1}(z,\cdot)\|=(2\pi)^d[N^\eps_{1,1}(z,r,\tau)-\partial^2_\tau N^\eps_{1,1}(z,r,\tau)]|_{(r=0,\tau=0)}\,.
        \end{equation}
        Moreover, we have an explicit expression in the physical domain as
        \begin{equation}\nonumber
            N^\eps_{1,1}(z,r,\tau)=N^\eps_{1,1}(0,r,\tau)e^{\frac{\omega_0^2z[R(\eta\tau)-R(0)]}{4\eta^2}}\,.
        \end{equation}
        Taking derivatives w.r.t $\tau$ and noting that $\partial_\tau R(0)=0$, we have
                \begin{equation}\nonumber
            \|\langle\xi\rangle^2\hat{N}^\eps_{1,1}(z,\cdot)\|=(2\pi)^d[-\partial^2_\tau N^\eps_{1,1}(0,r,\tau)+\frac{\omega_0^2z}{4}\Delta R(0)N^\eps_{1,1}(0,r,\tau)]|_{(r=0,\tau=0)}\le C\|\langle\xi\rangle^2\hat{M}^\eps_{1,1}(0)\|\,.
        \end{equation}
        This finishes the proof for $p=1$. The case for arbitrary $p\ge 1$ can be dealt with in a similar manner.
    \end{proof}
\end{lemma}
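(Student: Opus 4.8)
The plan is to pass to the Fourier domain in $(r,\tau)\mapsto(\zeta,\xi)$, where \eqref{eqn:M_11_eps} becomes $\partial_z\hat M^\eps_{1,1}=-iV(\xi)\hat M^\eps_{1,1}+\mathcal L^\eta\hat M^\eps_{1,1}$, with $V(\xi)=\frac{\Omega|\xi|^2}{2\omega_0^2}+\frac{\xi\cdot\zeta}{\omega_0}$ real and $\mathcal L^\eta$ the jump generator from Lemma~\ref{lemma:M_11}. Since $\zeta$ enters only as a parameter (through $V$ and the initial data), I would freeze it and integrate over it at the very end. The structural point is that $-iV$ contributes a phase of unit modulus while $\mathcal L^\eta$ generates a conservative jump process. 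Using the Feynman--Kac representation as in Lemma~\ref{lemma:M_11} and taking absolute values, the phase drops out and I obtain the pointwise domination $|\hat M^\eps_{1,1}(z,\zeta,\xi)|\le\hat N^\eps_{1,1}(z,\zeta,\xi)$, where $\hat N^\eps_{1,1}\ge0$ solves the phase-free equation $\partial_z\hat N^\eps_{1,1}=\mathcal L^\eta\hat N^\eps_{1,1}$ with data $|\hat M^\eps_{1,1}(0,\cdot)|$. Because $\hat N^\eps_{1,1}\ge0$, this reduces the weighted bound to the real positive problem $\|\langle\xi\rangle^{2p}\hat M^\eps_{1,1}(z)\|\le\|\langle\xi\rangle^{2p}\hat N^\eps_{1,1}(z)\|$.

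Next I would exploit that $\mathcal L^\eta$ is exactly multiplication by $\frac{\omega_0^2}{4\eta^2}[R(\eta\tau)-R(0)]$ in the physical $\tau$-variable, so that $\hat N^\eps_{1,1}$ admits the explicit physical-space solution
\begin{equation}\nonumber
N^\eps_{1,1}(z,r,\tau)=N^\eps_{1,1}(0,r,\tau)\,\exp\Big(\tfrac{\omega_0^2z[R(\eta\tau)-R(0)]}{4\eta^2}\Big).
\end{equation}
Since $\hat N^\eps_{1,1}\ge0$, the weighted total variation norm is, up to a dimensional constant, a differential operator evaluated at the origin: $\|\langle\xi\rangle^{2p}\hat N^\eps_{1,1}(z)\|=c_{p,d}\big[(1-\Delta_\tau)^pN^\eps_{1,1}(z,r,\tau)\big]\big|_{r=0,\tau=0}$. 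This turns the whole estimate into a question about the $\tau$-derivatives up to order $2p$ of the explicit product above, evaluated at $\tau=0$, which I would control by the Leibniz rule.

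The crux — and the step where the dangerous $\eta^{-2}$ prefactor must be tamed — is the differentiation of the exponential. Writing $E(\tau)=\frac{\omega_0^2z}{4\eta^2}[R(\eta\tau)-R(0)]$, the chain rule gives $\partial^k_\tau E=\frac{\omega_0^2z}{4}\eta^{k-2}(\partial^kR)(\eta\tau)$, so $E(0)=0$, $\nabla E(0)=0$ (this is precisely where the maximum hypothesis $\nabla R(0)=0$ is used), the Hessian $\nabla^2E(0)=\frac{\omega_0^2z}{4}\nabla^2R(0)=-\frac{\omega_0^2z}{4}\Sigma$ is finite and $\eta$-independent, and every higher derivative is $O(\eta^{k-2})$, hence uniformly bounded for $\eta\le1$. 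By Fa\`a di Bruno, all $\tau$-derivatives of $e^{E}$ at $\tau=0$ are therefore bounded uniformly in $\eta$ (equivalently in $\eps$), with first derivatives vanishing. Applying Leibniz to $(1-\Delta_\tau)^p[N^\eps_{1,1}(0,\cdot)e^{E}]$ then bounds the result by derivatives of $N^\eps_{1,1}(0,\cdot)$ of order at most $2p$ at the origin times these bounded coefficients; the former are in turn controlled by $\|\langle\xi\rangle^{2p}\hat N^\eps_{1,1}(0)\|=\|\langle\xi\rangle^{2p}\hat M^\eps_{1,1}(0)\|$. This yields the claim with $C_p=C_p(\omega_0,z,\sk_0)$ independent of $\eps$. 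I expect the only genuine obstacle to be the bookkeeping confirming that no surviving term carries a net negative power of $\eta$; the $p=1$ case makes the cancellation transparent, and general $p$ follows by the same expansion.
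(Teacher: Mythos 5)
Your proposal is correct and follows essentially the same route as the paper: Feynman--Kac domination of $\hat M^\eps_{1,1}$ by the phase-free solution $\hat N^\eps_{1,1}\ge 0$, the explicit physical-space formula $N^\eps_{1,1}(z)=N^\eps_{1,1}(0)e^{\omega_0^2z[R(\eta\tau)-R(0)]/4\eta^2}$, and control of the $\tau$-derivatives at the origin using $\nabla R(0)=0$ to kill the $\eta^{-1}$ term. The only difference is that you spell out the general-$p$ bookkeeping via Fa\`a di Bruno and Leibniz, where the paper treats $p=1$ explicitly and asserts the rest is analogous.
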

\subsection{Tightness in propagation distance}
\begin{lemma}\label{lemma:tightness_z}
For $\omega=\omega_0+\eps\eta\Omega$, $\sk=\sk_0+\eps\kappa$, lateral location $\frac{r}{\eps}+\eta x$  fixed, and $h_1,h_2\in\mathbb{R}$ with $|h_1-h_2|< 1$,
\begin{equation}\nonumber
\mathbb{E}|\upsilon^\eps(h_1,x;\Omega,\kappa)-\upsilon^\eps(h_2,x;\Omega,\kappa)|^{2n}\le c(n,d)|h_1-h_2|^{n}\,.
    \end{equation}
    \begin{proof}
Here we look at differences of the form
\begin{equation}\nonumber
    \mathbb{E}|\upsilon^\eps(h_1,x;\Omega,\kappa)-\upsilon^\eps(h_2,x;\Omega,\kappa)|^{2n}=\mathbb{E}|u^\eps(z_0+\eps\eta  h_1;\eps^{-1}r+\eta x,\omega,\sk)-u^\eps(z_0+\eps\eta  h_2;\eps^{-1}r+\eta x,\omega,\sk)|^{2n},
\end{equation}
where $\omega=\omega_0+\eps\eta\Omega$, $\sk=\sk_0+\eps\kappa$ and lateral location $\frac{r}{\eps}+\eta x$ are fixed. We can then write this in terms of the phase compensated field~\eqref{eqn:psi_def} as
\begin{equation}\nonumber
    \begin{aligned}
 &\upsilon^\eps(h_1,x;\Omega,\kappa)-\upsilon^\eps(h_2,x;\Omega,\kappa)=\int_{\mathbb{R}^d} [\psi^\eps(z_0+\eps\eta h_1,\xi;\omega,\sk)e^{-\frac{i\eta(z_0+\eps\eta h_1)|\xi|^2}{2\eps\omega}}e^{-\frac{R(0)(z_0+\eps\eta h_1)\omega^2}{8\eta^2}}\\
 &-\psi^\eps(z_0+\eps\eta h_2,\xi;\omega,\sk)e^{-\frac{i\eta(z_0+\eps\eta h_2)|\xi|^2}{2\eps\omega}}e^{-\frac{R(0)(z_0+\eps\eta h_2)\omega^2}{8\eta^2}}]e^{i\xi\cdot(\eps^{-1}r+\eta x)}\frac{\mathrm{d}\xi}{(2\pi)^d}      \\
 &=\int_{\mathbb{R}^d}[\psi^\eps(z_0+\eps\eta h_1,\xi;\omega,\sk)-\psi^\eps(z_0+\eps\eta h_2,\xi;\omega,\sk)]e^{-\frac{i\eta(z_0+\eps\eta h_1)|\xi|^2}{2\eps\omega}}e^{-\frac{R(0)(z_0+\eps\eta h_1)\omega^2}{8\eta^2}}e^{i\xi\cdot(\eps^{-1}r+\eta x)}\frac{\mathrm{d}\xi}{(2\pi)^d}\\
 &+\int_{\mathbb{R}^d}\psi^\eps(z_0+\eps\eta h_2,\xi;\omega,\sk)e^{-\frac{i\eta(z_0+\eps\eta h_1)|\xi|^2}{2\eps\omega}}[e^{-\frac{R(0)(z_0+\eps\eta h_1)\omega^2}{8\eta^2}}-e^{-\frac{R(0)(z_0+\eps\eta h_2)\omega^2}{8\eta^2}}]e^{i\xi\cdot(\eps^{-1}r+\eta x)}\frac{\mathrm{d}\xi}{(2\pi)^d} \\
& \int_{\mathbb{R}^d}\psi^\eps(z_0+\eps\eta h_2,\xi;\omega,\sk)e^{-\frac{R(0)(z_0+\eps\eta h_2)\omega^2}{8\eta^2}}[e^{-\frac{i\eta(z_0+\eps\eta h_1)|\xi|^2}{2\eps\omega}}-e^{-\frac{i\eta(z_0+\eps\eta h_2)|\xi|^2}{2\eps\omega}}]e^{i\xi\cdot(\eps^{-1}r+\eta x)}\frac{\mathrm{d}\xi}{(2\pi)^d},
    \end{aligned}
\end{equation}
which we write as $I_1+I_2+I_3$ with obvious notation.
So we have
\begin{equation}\nonumber
 \mathbb{E}|\upsilon^\eps(h_1,x;\Omega,\kappa)-\upsilon^\eps(h_2,x;\Omega,\kappa)|^{2n}=\mathbb{E}|I_1+I_2+I_3|^{2n}\le C(n)(\mathbb{E}|I_1|^{2n}+\mathbb{E}|I_2|^{2n}+\mathbb{E}|I_3|^{2n})\,.
\end{equation}
For the first term, we have
\begin{equation}\nonumber
    \begin{aligned}
        \mathbb{E}|I_1|^{2n}&=\int_{\mathbb{R}^{2nd}}e^{-\frac{i\eta}{2\eps\omega}(z_0+\eps\eta h_1)\sum_{j=1}^{n}(\xi_j-\zeta_j)}e^{-\frac{nR(0)(z_0+\eps\eta h_1)\omega^2}{4\eta^2}}e^{i\sum_{j=1}^n(\xi_j-\zeta_j)\cdot (\eps^{-1}r+\eta x)}\\
        &\mathbb{E}\prod_{j=1}^n[\psi^\eps(z_0+\eps\eta h_1,\xi_j;\cdot)-\psi^\eps(z_0+\eps\eta h_2,\xi_j;\cdot)][\psi^{\eps\ast}(z_0+\eps\eta h_1,\zeta_j;\cdot)-\psi^{\eps\ast}(z_0+\eps\eta h_2,\zeta_j;\cdot)]\frac{\mathrm{d}v}{(2\pi)^{2nd}}\\
        &\le \|\mathbb{E}\prod_{j=1}^n[\psi^\eps(z_0+\eps\eta h_1,\xi_j;\cdot)-\psi^\eps(z_0+\eps\eta h_2,\xi_j;\cdot)][\psi^{\eps\ast}(z_0+\eps\eta h_1,\zeta_j;\cdot)-\psi^{\eps\ast}(z_0+\eps\eta h_2,\zeta_j;\cdot)]\|e^{-\frac{nR(0)(z_0+\eps\eta h_1)\omega^2}{4\eta^2}}\,.
    \end{aligned}
\end{equation}
To control this difference, we use Duhamel expansions. Without loss of generality, let $h_1\ge h_2$ so that from~\eqref{eqn:psi_exp},
\begin{equation}\nonumber
    \begin{aligned}
       &\delta w(^\eps\xi):=\psi^\eps(z_0+\eps\eta h_1,\xi;\cdot)-\psi^\eps(z_0+\eps\eta h_2,\xi;\cdot)\\
       &\sum_{n\ge 1}\Big(\frac{\omega}{2\eta}\Big)^{n}\int_{[z_0+\eps\eta h_2,z_0+\eps\eta h_1]^n_<}\int_{\mathbb{R}^{nd}}\psi^\eps(z_0+\eps\eta h_2,\xi-k_1-\cdots-k_n;\cdot)e^{\frac{i\eta}{2\eps\omega}[s_1g(\xi,k_1)+s_2g(\xi-k_1,k_2)+\cdots]}e^{\frac{in\pi}{2}}\prod_{j=1}^n\frac{\mathrm{d}\hat{B}(s_j,k_j)}{(2\pi)^d}\,.
    \end{aligned}
\end{equation}
So a product of $2n$ such (complex-symmetrized) differences is given by
\begin{equation}\nonumber
    \begin{aligned}
        \mathbb{E}\prod_{j=1}^n\delta w^\eps(\xi_j)\delta w^{\eps\ast}(\zeta_j)&=\sum_{N\ge n}\sum_{n_1+\cdots+n_{2n}=2N, n_j\ge 1}\Big(\frac{\omega}{2\eta}\Big)^{2N}\int_{[z_0+\eps\eta h_2,z_0+\eps\eta h_1]^{n_1}_<}\cdots\int_{[z_0+\eps\eta h_2,z_0+\eps\eta h_1]^{n_{2n}}_<}\\
        &\int_{\mathbb{R}^{2Nd}}\Psi^\eps(z_2,\cdots,z_2,v-A_{\vec{n}}\vec{k})e^{\frac{i\eta}{2\eps\omega}G_{\vec{n}}(\vec{s},\vec{k},v)}\mathbb{E}[\prod_{j=1}^{2N}\frac{\mathrm{d}\hat{B}(s_j,k_j)}{(2\pi)^d}]\,,
    \end{aligned}
\end{equation}
where $z_2=z_0+\eps\eta h_2$, $\vec{n}=(n_1,\cdots,n_{2n})$, $\vec{k}=(k_1,\cdots,k_{2N})$, $\vec{s}=(s_1,\cdots,s_{2N})$, $A_{\vec{n}}$ is a linear operator and $G_{\vec{n}}$ is real-valued. This gives
\begin{equation}\nonumber
\begin{aligned}
    \|\mathbb{E}\prod_{j=1}^n\delta w^\eps(\xi_j)\delta w^{\eps\ast}(\zeta_j)\|&\le \|\Psi_{n,n}^\eps(z_2,\cdots,z_2)\|     \sum_{N\ge n}\sum_{n_1+\cdots+n_{2n}=2N, n_j\ge 1}\Big(\frac{\omega^2}{8\eta^2}\Big)^{N}(\eps\eta(h_1-h_2))^NR(0)^N\frac{(2N)!}{N!n_1!\cdots n_{2n}!}\,.
\end{aligned}
\end{equation}
The summation has powers higher than $(\eps/\eta)^n(h_1-h_2)^n$, which gives the bound
\begin{equation}\nonumber
    \begin{aligned}
      & \|\mathbb{E}\prod_{j=1}^n\delta w^\eps(\xi_j)\delta w^{\eps\ast}(\zeta_j)\|\\
      &\le \|\Psi_{n,n}^\eps(z_2,\cdots,z_2)\| \Big(\frac{\eps}{\eta}|h_1-h_2|\Big)^n    \sum_{N\ge n}\sum_{n_1+\cdots+n_{2n}=2N, n_j\ge 1}  (\omega^2R(0))^N\frac{(2N)!}{N!n_1!\cdots n_{2n}!}\\
      &\le C\Big(\frac{\eps}{\eta}|h_1-h_2|\Big)^n  \|\Psi_{n,n}^\eps(0,\cdots,0)\|e^{n^2z_2/\eta^2}\,.
    \end{aligned}
\end{equation}
In the last inequality, we have used the regularity bound $\|\Psi_{p,q}^\eps(z,\cdots,z)\|\le \|\Psi_{np,q}^\eps(0,\cdots,0)\|e^{(p+q)^2R(0)z/4\eta^2}$ and the combinatorial Lemma~\ref{lemma:fact_sum}. Due to the scaling relationship between $\eps$ and $\eta$, we have that 
\begin{equation}\nonumber
    \mathbb{E}|I_1|^{2n}\le c(n)|h_1-h_2|^n\,.
\end{equation}
The boundedness of $\mathbb{E}|I_2|^{2n}$ follows by noting that
\begin{equation}\nonumber
    |e^{-\frac{R(0)(z_0+\eps\eta h_1)\omega^2}{8\eta^2}}-e^{-\frac{R(0)(z_0+\eps\eta h_2)\omega^2}{8\eta^2}}|\le \frac{\eps |h_1-h_2|R(0)\omega^2}{8\eta}
\end{equation}
so that $2n$ such products give
\begin{equation}\nonumber
    \mathbb{E}|I_2|^{2n}\le \|\Psi_{n,n}^\eps(z_2,\cdots,z_2)\|e^{-\frac{nR(0)z_2\omega^2}{4\eta^2}}\Big(\frac{\eps |h_1-h_2|R(0)\omega^2}{8\eta}\Big)^{2n}\le C(n)|h_1-h_2|^{2n}\,.
\end{equation}
The third term $\mathbb{E}|I_3|^{2n}$ involves contributions from the Laplacians and requires more care. We have
\begin{equation}\nonumber
    \begin{aligned}
        \mathbb{E}|I_3|^{2n}=\int_{\mathbb{R}^{2nd}}&\Psi^\eps(z_2,\cdots,z_2,v;\cdot)e^{-\frac{nR(0)z_2\omega^2}{4\eta^2}}e^{-\frac{i\eta z_2}{2\eps\omega}\sum_{j=1}^n(|\xi_j|^2-|\zeta_j|^2)}e^{i\sum_{j=1}^n(\xi_j-\zeta_j)\cdot(\eps^{-1}r+\eta x)}\\
        &\prod_{j=1}^n(e^{-\frac{i\eta^2}{2\omega}(h_1-h_2)|\xi_j|^2}-1)(e^{\frac{i\eta^2}{2\omega}(h_1-h_2)|\zeta_j|^2}-1)\frac{\mathrm{d}v}{(2\pi)^{2nd}}\,.
    \end{aligned}
\end{equation}

In the kinetic regime with $\eta=1$, bounding the above by $|h_1-h_2|^{2n}$ is straightforward due to the boundedness from Lemma~\ref{lemma:phi_estim}:
\begin{equation}\nonumber
\|\prod_{j=1}^{2n}\langle v_j\rangle^2\Psi^\eps_{n,n}(z_2,\cdots,z_2,v)\|e^{-\frac{nR(0)z_2\omega^2}{4\eta^2}}\le \|\prod_{j=1}^{2n}\langle v_j\rangle^2\Psi^\eps_{n,n}(0,\cdots,0,v)\|e^{cn^2}\,.    
\end{equation}
In the diffusive regime, this grows as $e^{cn^2/\eta^2}$, which is not compensated by $|h_1-h_2|^{2n}$ unless the difference $|h_1-h_2|$ is of $\mathcal{O}(\eps^\alpha)$ for some $0<\alpha<1$. However, when $\eta\ll 1$ with $|h_1-h_2|>O(\eps^\alpha)$ the complex Gaussian approximation holds which can be leveraged as in~\cite{bal2024complex}. We always have the first type of bound
\begin{equation}\label{eqn:E_I3_bound1}
    \mathbb{E}|I_3|^{2n}\le \|\varphi^\eps(0)\|\eta^{2n}|h_1-h_2|^{2n}e^{cn^2/\eta^2}\,,
\end{equation}
which we can use when $|h_1-h_2|=\mathcal{O}(\eps^\alpha)$. When $|h_1-h_2|$ is large, we use the complex Gaussian approximation. In this case, using Lemma~\ref{Lemma:hat_mu_pq_dec} with $z_j=z'_l=z_2$ we have
\begin{equation}\nonumber
    \begin{aligned}
        \mathbb{E}|I_3|^{2n}&=\int_{\mathbb{R}^{2nd}}\tilde{\mathcal{N}}_{n,n}^\eps(z_2,v;\cdot)e^{i\sum_{j=1}^n(\xi_j-\zeta_j)\cdot(\eps^{-1}r+\eta x)}\prod_{j=1}^n\delta\hat{G}(\eta^2(h_1-h_2),\xi_j)\delta\hat{G}^\ast(\eta^2(h_1-h_2),\zeta_j)\frac{\mathrm{d}v}{(2\pi)^{2nd}}+\mathcal{O}(\eps^{\frac13})\,,
    \end{aligned}
\end{equation}
for $\delta \hat{G}(h,\xi)=\hat{G}(h,\xi)-1$ with $G(h,x)=\big(\frac{\omega}{2\pi ih}\big)^{d/2}e^{\frac{i\omega|x|^2}{2 h}}$. Expanding $\tilde{N}^\eps_{n,n}$  gives the leading term to be
\begin{equation}\nonumber
    \begin{aligned}
       & \sum_{m=1}^M\int_{\mathbb{R}^{2nd}}\prod_{\gamma\in\Lambda_m}[\hat{\mu}^\eps_{1,1}(z_2,z_2,\xi_{\gamma_1},\zeta_{\gamma_2};\cdot)-\hat{\mu}^\eps_{1,0}(z_2,\xi_{\gamma_1};\cdot)\hat{\mu}^{\eps\ast}_{1,0}(z_2,\zeta_{\gamma_2};\cdot)]\delta\hat{G}(\eta^2(h_1-h_2),\xi_{\gamma_1})\delta\hat{G}^\ast(\eta^2(h_1-h_2),\zeta_{\gamma_2})\\
        &\prod_{j\neq \gamma_1}\hat{\mu}^\eps_{1,0}(z_2,\xi_j;\cdot)\delta\hat{G}(\eta^2(h_1-h_2),\xi_j)\prod_{l\neq \gamma_2}\hat{\mu}^{\eps\ast}_{1,0}(z_2,\zeta_l;\cdot)\delta\hat{G}^\ast(\eta^2(h_1-h_2),\zeta_j)e^{i\sum_{j=1}^n(\xi_j-\zeta_j)\cdot(\eps^{-1}r+\eta x)}\frac{\mathrm{d}v}{(2\pi)^{2nd}}\\
        &+\int_{\mathbb{R}^{2nd}}\prod_{j=1}^p\hat{\mu}^\eps_{1,0}(z_2,\xi_j;\cdot)\delta\hat{G}(\eta^2(h_1-h_2),\xi_j)\prod_{l=1}^q\hat{\mu}^{\eps\ast}_{1,0}(z_2,\zeta_l;\cdot)\delta\hat{G}^\ast(\eta^2(h_1-h_2),\zeta_j)e^{i\sum_{j=1}^n(\xi_j-\zeta_j)\cdot(\eps^{-1}r+\eta x)}\frac{\mathrm{d}v}{(2\pi)^{2nd}}
    \end{aligned}
\end{equation}
The product structure of the integrals allows us to distribute the integrals over the pairings $\gamma$. The terms involving first moments decay exponentially, with
\begin{equation}\nonumber
    \|\langle\xi\rangle^2\hat{\mu}^\eps_{1,0}(z,\xi;\omega,\sk)\|\le\|\langle\xi\rangle^2\hat{\mu}^\eps_{1,0}(0,\xi;\omega,\sk)\|e^{-\frac{R(0)\omega^2 z}{8\eta^2}}\,. 
\end{equation}
This gives
\begin{equation}\nonumber
    \begin{aligned}
        \mathbb{E}|I_3|^{2n}&\le \sum_{m=1}^M\eta^{4n-2|m|}|h_1-h_2|^{2n-|m|}\|\langle\xi\rangle^2\hat{\mu}^\eps_{1,0}(0,\xi;\omega,\sk)\|^{2n-|m|}e^{-\frac{(2n-|m|)R(0)\omega^2 z}{8\eta^2}}\\
        &\Big(\int_{\mathbb{R}^{2d}}\hat{\mu}^\eps_{1,1}(z_2,z_2,\xi,\zeta;\cdot)\delta\hat{G}(\eta^2(h_1-h_2),\xi)\delta\hat{G}^\ast(\eta^2(h_1-h_2),\zeta)e^{i(\xi-\zeta)\cdot(\eps^{-1}r+\eta x)}\frac{\mathrm{d}\xi\mathrm{d}\zeta}{(2\pi)^{2d}}\Big)^{|m|}\\
        &+\Big(\eta^{4}|h_1-h_2|^{2}\|\langle\xi\rangle^2\hat{\mu}^\eps_{1,0}(0,\xi;\omega,\sk)\|^{2}e^{-\frac{R(0)\omega^2 z}{4\eta^2}}\Big)^{n}+\mathcal{O}(\eps^{\frac13})\,.
    \end{aligned}
\end{equation}
So it suffices to look at the contribution from the second moments. In particular, let $h=h_1-h_2$ and consider
\begin{equation}\nonumber
    \begin{aligned}
     &   \int_{\mathbb{R}^{2d}}\hat{\mu}^\eps_{1,1}(z_2,z_2,\xi,\zeta;\cdot)(1-e^{-\frac{i\eta^2h|\xi|^2}{2\omega}})(1-e^{\frac{i\eta^2h|\zeta|^2}{2\omega}})e^{i(\eps^{-1}r+\eta x)\cdot(\xi-\zeta)}\frac{\mathrm{d}\xi\mathrm{d}\zeta}{(2\pi)^{2d}}\\
     &= \int_{\mathbb{R}^{4d}}{\mu}^\eps_{1,1}(z_2,z_2,x',y';\cdot)e^{-i(\xi\cdot x'-\zeta\cdot y')}(1-e^{-\frac{i\eta^2h|\xi|^2}{2\omega}})(1-e^{\frac{i\eta^2h|\zeta|^2}{2\omega}})e^{i(\eps^{-1}r+\eta x)\cdot(\xi-\zeta)}\frac{\mathrm{d}x'\mathrm{d}y'\mathrm{d}\xi\mathrm{d}\zeta}{(2\pi)^{2d}}\,.
    \end{aligned}
\end{equation}
From the change of variables $(x',y')\to\big(\frac{r'}{\eps}+\frac{\eta\tau}{2},\frac{r'}{\eps}-\frac{\eta\tau}{2}\big)$, $(\xi,\zeta)\to\big(\frac{\xi}{\eta}+\frac{\eps\zeta}{2}, \frac{\xi}{\eta}-\frac{\eps\zeta}{2}\big)$ and using Proposition~\ref{prop:m_11_expn} (with $h=\Omega=0$), we have that the above integral upto $O(\eps^\frac13)$ is
\begin{equation}\nonumber
    \begin{aligned}
     & \int_{\mathbb{R}^{4d}}M^\eps_{1,1}(z_2,r',\tau';\Omega=0)e^{-i(r'\cdot\zeta -\tau'\cdot\xi)}(1-e^{-\frac{i\eta^2h|\frac{\xi}{\eta}+\frac{\eps\zeta}{2}|^2}{2\omega}})(1-e^{\frac{i\eta^2h|\frac{\xi}{\eta}-\frac{\eps\zeta}{2}|^2}{2\omega}})e^{i(r+\eps\eta x)\cdot\zeta}\frac{\mathrm{d}r'\mathrm{d}\tau'\mathrm{d}\xi\mathrm{d}\zeta}{(2\pi)^{2d}}  \\
      &=\int_{\mathbb{R}^{2d}}\hat{M}^\eps_{1,1}(z_2,\zeta,\xi;\Omega=0)(1-e^{-\frac{i\eta^2h|\frac{\xi}{\eta}+\frac{\eps\zeta}{2}|^2}{2\omega}})(1-e^{\frac{i\eta^2h|\frac{\xi}{\eta}-\frac{\eps\zeta}{2}|^2}{2\omega}})e^{i(r+\eps\eta x)\cdot\zeta}\frac{\mathrm{d}\xi\mathrm{d}\zeta}{(2\pi)^{2d}}\,,
    \end{aligned}
\end{equation}
where $M^\eps_{1,1}$ follows equation~\eqref{eqn:M_11_eps}. Upto $O(\eps^\frac13)$, the above integral can be written as
\begin{equation}\nonumber
    \begin{aligned}
        \int_{\mathbb{R}^{2d}}\hat{M}^\eps_{1,1}(z_2,\zeta,\xi;\Omega=0)(1-e^{-\frac{ih|\xi|^2}{2\omega}})(1-e^{\frac{ih|\xi|^2}{2\omega}})e^{ir\cdot\zeta}\frac{\mathrm{d}\xi\mathrm{d}\zeta}{(2\pi)^{2d}}\,,
    \end{aligned}
\end{equation}
which using Lemma~\ref{lemma:M_eps_11_reg} is bounded by
\begin{equation}\nonumber
   \frac{|h|^2}{\omega^2} \int_{\mathbb{R}^{2d}}|\xi|^4\hat{M}^\eps_{1,1}(z_2,\zeta,\xi;\Omega=0)\frac{\mathrm{d}\xi\mathrm{d}\zeta}{(2\pi)^{2d}}\le C|h|^2\,.
\end{equation}
This means $\mathbb{E}|I_3|^{2n}\le C\min\{\eps^\frac13+|h|^{2n},|h|^{2n}e^{c/\eta^2}\}$.
Now set $h_0=\eps^\frac{1}{6n}$. For $|h|>h_0$, we choose the first bound and for $|h|<h_0$, we use the second bound. In the latter case, we have for any $\alpha\in(0,1)$, $|h|^{2n}e^{c/\eta^2}\le |h|^{2n(1-\alpha)}\eps^{\alpha/3}e^{c/\eta^2}\le C(\alpha,n)|h|^{2n(1-\alpha)}$. Either way, the influence of the difference in the phase compensated fields from $\mathbb{E}|I_1|^{2n}$ brings in a bound of $|h|^{n}$, limiting the exponent of $h$ to $n$. This finishes the proof of the Lemma.
    \end{proof}
\end{lemma}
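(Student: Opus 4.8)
The plan is to express the physical field through the phase-compensated field $\psi^\eps$ of \eqref{eqn:psi_def}. Since $\upsilon^\eps(h_j,x;\Omega,\kappa)$ is the inverse Fourier transform in $\xi$ of $\psi^\eps(z_0+\eps\eta h_j,\xi;\omega,\sk)$ multiplied by the deterministic damping $e^{-R(0)(z_0+\eps\eta h_j)\omega^2/8\eta^2}$ and the free Schr\"odinger phase $e^{-i\eta(z_0+\eps\eta h_j)|\xi|^2/2\eps\omega}$, only three factors carry the dependence on $h_j$. I would therefore split the difference into $I_1+I_2+I_3$, isolating respectively the change in the stochastic amplitude $\psi^\eps$, in the damping exponent, and in the oscillatory phase, and use $\E|I_1+I_2+I_3|^{2n}\le C(n)\sum_{k=1}^3\E|I_k|^{2n}$. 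Throughout I write $h:=h_1-h_2$ and $z_2:=z_0+\eps\eta h_2$, and assume $h_1\ge h_2$ without loss of generality.

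For $I_1$ I would Duhamel-expand the increment $\delta w^\eps(\xi):=\psi^\eps(z_0+\eps\eta h_1,\xi;\cdot)-\psi^\eps(z_2,\xi;\cdot)$ via \eqref{eqn:psi_exp} started at $z_2$, noting that every term carries at least one Gaussian increment over the short interval of length $\eps\eta h$. Taking the $2n$-th moment and applying Wick's theorem, each of the $2n$ factors contributes at least one increment, so the lowest-order surviving term is of size $(\eps\eta h\,R(0))^{n}$; summing the series with the combinatorial identity of Lemma~\ref{lemma:fact_sum} and bounding $\|\Psi^\eps_{n,n}(z_2,\dots,z_2)\|$ by $\|\Psi^\eps_{n,n}(0,\dots,0)\|e^{cn^2/\eta^2}$ gives $\E|I_1|^{2n}\le c(n)(\eps/\eta)^{n}h^{n}e^{cn^2/\eta^2}$. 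Because $\eta=(\log|\log\eps|)^{-1}$, the prefactor $(\eps/\eta)^{n}e^{cn^2/\eta^2}$ tends to zero, so $\E|I_1|^{2n}\le c(n)|h|^{n}$. The term $I_2$ is immediate: the damping difference is Lipschitz in its exponent, of size $\eps|h|R(0)\omega^2/8\eta$, and $2n$ copies against the uniformly bounded $\|\Psi^\eps_{n,n}(z_2,\dots,z_2)\|e^{-nR(0)z_2\omega^2/4\eta^2}$ yield $\E|I_2|^{2n}\le C(n)|h|^{2n}$.

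The decisive term is $I_3$, where the phase factors expand as products of $(e^{-i\eta^2 h|\xi_j|^2/2\omega}-1)$ and its conjugate, bringing down powers of $|\xi|^2$; this is exactly the reason for the weighted estimate of Lemma~\ref{lemma:M_eps_11_reg}. A direct bound from the moment estimates of Lemma~\ref{lemma:phi_estim} gives $\E|I_3|^{2n}\le \|\varphi^\eps(0)\|\,\eta^{2n}|h|^{2n}e^{cn^2/\eta^2}$, useful only when $h=O(\eps^\alpha)$. For larger $h$ I would invoke the complex-Gaussian factorization of Lemma~\ref{Lemma:hat_mu_pq_dec}: replacing $\tilde{\mathcal N}^\eps_{n,n}$ by its sum over perfect pairings (the first-moment pieces decaying like $e^{-R(0)\omega^2 z_2/8\eta^2}$) reduces the estimate, pair by pair, to a single integral $\int_{\mathbb{R}^{2d}}\hat\mu^\eps_{1,1}(z_2,z_2,\xi,\zeta;\cdot)(1-e^{-i\eta^2 h|\xi|^2/2\omega})(1-e^{i\eta^2 h|\zeta|^2/2\omega})\,\mathrm d\xi\,\mathrm d\zeta$. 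Passing to center/difference variables, invoking Proposition~\ref{prop:m_11_expn} at $h=\Omega=0$ to rewrite this through $M^\eps_{1,1}$, and replacing the scaled phase by $h|\xi|^2/2\omega$ up to $O(\eps^{1/3})$, Lemma~\ref{lemma:M_eps_11_reg} bounds each pair by $\tfrac{|h|^2}{\omega^2}\int_{\mathbb{R}^{2d}}|\xi|^4\hat M^\eps_{1,1}\le C|h|^2$, hence $n$ pairs by $C|h|^{2n}$. Combining both regimes, $\E|I_3|^{2n}\le C\min\{\eps^{1/3}+|h|^{2n},\,|h|^{2n}e^{c/\eta^2}\}$; splitting at $h_0:=\eps^{1/(6n)}$ (crude bound for $|h|<h_0$, Gaussian bound for $|h|>h_0$) and using $\eps^{\alpha/3}e^{c/\eta^2}\to 0$ yields $\E|I_3|^{2n}\le C(\alpha,n)|h|^{2n(1-\alpha)}$ for any $\alpha\in(0,1)$.

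The main obstacle is $I_3$: differentiating the Schr\"odinger phase produces $|\xi|^2$-weights whose total-variation moments grow like $e^{c/\eta^2}$ in the diffusive regime, so no single estimate is uniform in $h$. The resolution is the two-scale split above --- the crude weighted bound for $h$ of order $\eps^\alpha$, and the pairing reduction of Lemmas~\ref{Lemma:hat_mu_pq_dec} and~\ref{lemma:M_eps_11_reg} for larger $h$ --- trading the uncontrolled $e^{c/\eta^2}$ for a power of $\eps$. Since $I_2$ and $I_3$ contribute exponents at least $2n(1-\alpha)>n$ while $I_1$ contributes exactly $n$, the increment in the phase-compensated amplitude dominates, and the claimed bound $\E|\upsilon^\eps(h_1,x;\Omega,\kappa)-\upsilon^\eps(h_2,x;\Omega,\kappa)|^{2n}\le c(n,d)|h_1-h_2|^{n}$ follows.
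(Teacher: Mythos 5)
Your proposal is correct and follows essentially the same route as the paper's proof: the same $I_1+I_2+I_3$ decomposition isolating the stochastic amplitude, the damping, and the Schr\"odinger phase; the same Duhamel/Wick argument with Lemma~\ref{lemma:fact_sum} for $I_1$; and the same two-regime treatment of $I_3$ (crude weighted bound for $|h|\lesssim\eps^{\alpha}$, complex-Gaussian pairing via Lemmas~\ref{Lemma:hat_mu_pq_dec} and~\ref{lemma:M_eps_11_reg} otherwise, with the split at $h_0=\eps^{1/(6n)}$). The final observation that $I_1$ limits the exponent to $n$ also matches the paper.
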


\subsection{Tightness in frequency}
\begin{lemma}\label{lemma:tightness_omega}
For $z=z_0+\eps\eta h$, $\sk=\sk_0+\eps\kappa$, lateral location $\frac{r}{\eps}+\eta x$  fixed, and $\Omega_1,\Omega_2\in\mathbb{R}$ with $|\Omega_1-\Omega_2|< 1$,
\begin{equation}\nonumber
 \mathbb{E}|\upsilon^\eps(h,x;\Omega_1,\kappa)-\upsilon^\eps(h,x;\Omega_2,\kappa)|^{2n}\le c_\alpha(n,d)|\Omega_1-\Omega_2|^{2n\alpha}\,,
    \end{equation}
    where $\alpha$ is any real number in $(0,1)$.
    \begin{proof}
Following the phase compensation approach as above, for fixed $z=z_0+\eps\eta h$, $\eps^{-1}r+\eta x$, $\sk=\sk_0+\eps\kappa$,
\begin{equation}\nonumber
    \begin{aligned}
      & u^\eps(z,\eps^{-1}r+\eta x;\omega_1,\sk)- u^\eps(z,\eps^{-1}r+\eta x;\omega_2,\sk)=\int_{\mathbb{R}^d}[ \hat{u}^\eps(z,\xi;\omega_1,\sk)- \hat{u}^\eps(z,\xi;\omega_2,\sk)]e^{i\xi\cdot(\eps^{-1}r+\eta x)}\frac{\mathrm{d}\xi}{(2\pi)^d}\\
       &=\int_{\mathbb{R}^d}[\psi^\eps(z,\xi;\omega_1,\cdot)e^{-\frac{iz\eta|\xi|^2}{2\eps\omega_1}}e^{-\frac{R(0)z\omega_1^2}{8\eta^2}}-\psi^\eps(z,\xi;\omega_2,\cdot)e^{-\frac{iz\eta|\xi|^2}{2\eps\omega_2}}e^{-\frac{R(0)z\omega_2^2}{8\eta^2}}]e^{i\xi\cdot(\eps^{-1}r+\eta x)}\frac{\mathrm{d}\xi}{(2\pi)^d}\\
       &=\int_{\mathbb{R}^d}[\psi^\eps(z,\xi;\omega_1,\cdot)-\psi^\eps(z,\xi;\omega_2,\cdot)]e^{-\frac{iz\eta|\xi|^2}{2\eps\omega_1}}e^{-\frac{R(0)z\omega_1^2}{8\eta^2}}\frac{\mathrm{d}\xi}{(2\pi)^d}\\
       &+\int_{\mathbb{R}^d}\psi^\eps(z,\xi;\omega_2,\cdot)e^{-\frac{iz\eta|\xi|^2}{2\eps\omega_1}}(e^{-\frac{R(0)z\omega_1^2}{8\eta^2}}-e^{-\frac{R(0)z\omega_2^2}{8\eta^2}})\frac{\mathrm{d}\xi}{(2\pi)^d}\\
       &+\int_{\mathbb{R}^d}\psi^\eps(z,\xi;\omega_2,\cdot)e^{-\frac{R(0)z\omega_2^2}{8\eta^2}}(e^{-\frac{iz\eta|\xi|^2}{2\eps\omega_1}}-e^{-\frac{iz\eta|\xi|^2}{2\eps\omega_2}})\frac{\mathrm{d}\xi}{(2\pi)^d}:=J_1+J_2+J_3\,.
    \end{aligned}
\end{equation}
Let $\Omega=\Omega_2-\Omega_1$. Note that
\begin{equation}\nonumber
    e^{-\frac{R(0)z\omega_1^2}{8\eta^2}}-e^{-\frac{R(0)z\omega_2^2}{8\eta^2}}=e^{-\frac{R(0)z\omega_1^2}{8\eta^2}}(1-e^{-\frac{R(0)\eps\eta\Omega(\omega_1+\omega_2)}{8}})
\end{equation}
and
\begin{equation}\nonumber
   e^{-\frac{iz\eta|\xi|^2}{2\eps\omega_1}}-e^{-\frac{iz\eta|\xi|^2}{2\eps\omega_2}}= e^{-\frac{iz\eta|\xi|^2}{2\eps\omega_1}}(1-e^{\frac{iz\eta^2\Omega|\xi|^2}{2\omega_1\omega_2}})\,.
\end{equation}
So as in the proof of Lemma~\ref{lemma:tightness_z}, terms $J_2$ and $J_3$ can be shown to contribute
\begin{equation}\nonumber
    \mathbb{E}|J_2|^{2n}\le C_1(n,d)|\Omega|^{2n},\quad \mathbb{E}|J_3|^{2n}\le C_{2,\alpha}(n,d)|\Omega|^{2n(1-\alpha)}\,,
\end{equation}
where $\alpha$ is an arbitrary number in $(0,1)$.
Expanding the phase compensated field gives for the difference in the first term $J_1$,
\begin{equation}\nonumber
    \begin{aligned}
        &\psi^\eps_n(z,\xi;\omega_1)-\psi^\eps_n(z,\xi;\omega_2)=\Big(\frac{i\omega_1}{2\eta}\Big)^{n}\int_{[0,z]^n_<}\int_{\mathbb{R}^{nd}}\psi_0^\eps(\xi-k_1-\cdots-k_n)e^{\frac{i\eta}{2\eps\omega_1}[s_1g(\xi,k_1)+\cdots]}\prod\frac{\mathrm{d}\hat{B}(s_j,k_j)}{(2\pi)^d}\\
        -&\Big(\frac{i\omega_2}{2\eta}\Big)^{n}\int_{[0,z]^n_<}\int_{\mathbb{R}^{nd}}\psi_0^\eps(\xi-k_1-\cdots-k_n)e^{\frac{i\eta}{2\eps\omega_2}[s_1g(\xi,k_1)+\cdots]}\prod\frac{\mathrm{d}\hat{B}(s_j,k_j)}{(2\pi)^d}\\
        &=\Big(\frac{i\omega_1}{2\eta}\Big)^{n}\Big(1-\frac{\omega_2^n}{\omega_1^n}\Big)\int_{[0,z]^n_<}\int_{\mathbb{R}^{nd}}\psi_0^\eps(\xi-k_1-\cdots-k_n)e^{\frac{i\eta}{2\eps\omega_1}[s_1g(\xi,k_1)+\cdots]}\prod\frac{\mathrm{d}\hat{B}(s_j,k_j)}{(2\pi)^d}\\
        &+\Big(\frac{i\omega_2}{2\eta}\Big)^{n}\int_{[0,z]^n_<}\int_{[0,z]^n_<}\int_{\mathbb{R}^{nd}}\psi_0^\eps(\xi-k_1-\cdots-k_n)(e^{\frac{i\eta}{2\eps\omega_1}[s_1g(\xi,k_1)+\cdots]}-e^{\frac{i\eta}{2\eps\omega_2}[s_1g(\xi,k_1)+\cdots]})\prod\frac{\mathrm{d}\hat{B}(s_j,k_j)}{(2\pi)^d}\\
        &:=\tilde{J}_{1,1}+\tilde{J}_{1,2}\,.
    \end{aligned}
\end{equation}
For $\omega_2-\omega_1=\eps\eta\Omega$, 
\begin{equation}\nonumber
 \Big|1-\frac{\omega_2^n}{\omega_1^n}\Big|=\Big|1-\Big(\frac{\omega_1+\eps\eta\Omega}{\omega_1}\Big)^n\Big|\le \frac{\eps\eta|\Omega|}{\omega_1}\sum_{j=1}^n\binom{n}{j}\le    2^n\frac{\eps\eta|\Omega|}{\omega_1}\,.
\end{equation}
Also, the difference in phases in $\tilde{J}_{1,2}$ is of the form
\begin{equation}\nonumber
  \begin{aligned}
      e^{\frac{i\eta}{2\eps\omega_1}[s_1g(\xi,k_1)+\cdots]}-e^{\frac{i\eta}{2\eps\omega_2}[s_1g(\xi,k_1)+\cdots]}=e^{\frac{i\eta}{2\eps\omega_1}[s_1g(\xi,k_1)+\cdots]}(1-e^{\frac{i\eta^2\Omega}{2\omega_1\omega_2}[s_1g(\xi,k_1)+\cdots]})\,.
  \end{aligned}  
\end{equation}
For $1\le j\le n-1$, we have
\begin{equation}\nonumber
    |g(\xi-k_1-\cdots-k_j,k_{j+1})|=|\xi-k_1-\cdots-k_j|^2-|\xi-k_1-\cdots-k_{j+1}|^2\le 2^{j+2}\langle \xi\rangle^2\prod_{l=1}^{j+1}\langle \le k_l\rangle^2\le 2^{n+2}\langle\xi\rangle^2\prod_{l=1}^{n}\langle k_l\rangle^2\,.
\end{equation}
This gives
\begin{equation}\nonumber
    |e^{\frac{i\eta}{2\eps\omega_1}[s_1g(\xi,k_1)+\cdots]}-e^{\frac{i\eta}{2\eps\omega_2}[s_1g(\xi,k_1)+\cdots]}|\le \frac{\eta^2|\Omega|}{\omega_1\omega_2}nz2^{n}\langle \xi\rangle^2\prod_{j=1}^{n}\langle k_j\rangle^2\,.
\end{equation}
Computing the contributions from a product of $2n$ complex symmetrized copies of $\tilde{J}_{1,j}$ as in Lemma~\ref{lem:phase_comp_pq} along with the estimates above gives
\begin{equation}\nonumber
    \mathbb{E}|J_1|^{2n}\le C(n,d)|\Omega|^{2n}e^{c/\eta^2}\,.
\end{equation}
As before, this is controlled in $\Omega$ only when $\eta$ is much larger that $|\Omega|$. For the case $|\Omega|\gg \eta$, we use the complex Gaussian approximation. This gives
\begin{equation}\nonumber   \mathbb{E}|\upsilon^\eps(h,x;\Omega_1,\kappa)-\upsilon^\eps(h,x;\Omega_2,\kappa)|^{2n}=n!\mathbb{E}|\upsilon^\eps(h,x;\Omega_1,\kappa)-\upsilon^\eps(h,x;\Omega_2,\kappa)|^2 + O(\eps^\frac13)\,.
\end{equation}
So it suffices to control differences in second moments. Note that
\begin{equation}\nonumber
\mathbb{E}|\upsilon^\eps(h,x;\Omega_1,\kappa)-\upsilon^\eps(h,x;\Omega_2,\kappa)|^2=E^\eps(\Omega)+O(\eps^\alpha), \quad\alpha\in(0,1)\,,
\end{equation}
where $E^\eps(\Omega)=2M^\eps_{1,1}(0)-M^\eps_{1,1}(\Omega)-M^\eps_{1,1}(-\Omega)$ and $M^\eps_{1,1}(\Omega)$ solves Eq.~\eqref{eqn:M_11_eps}. Then $E^\eps(\Omega)$ solves the PDE
\begin{equation}\nonumber
    \partial_zE^\eps=\frac{i\Omega}{2\omega_0^2}\Delta_\tau(M^\eps_{1,1}\big(\Omega)-M^\eps_{1,1}(-\Omega)\big) +\frac{i}{\omega_0}\partial_r\cdot\partial_\tau E^\eps+\frac{\omega_0^2}{4\eta^2}[R(\eta\tau)-R(0)]E^\eps,\quad E^\eps(z=0)=0\,.
\end{equation}
Controlling the source $\Delta_\tau(M^\eps_{1,1}\big(\Omega)-M^\eps_{1,1}(-\Omega)\big)$ in the total variation norm in the Fourier domain should yield that $E^\eps$ is controlled in the uniform sense in the physical domain. Note that from Lemma~\ref{lemma:M_eps_11_reg}, we have $\|\langle\xi\rangle^2\hat{M}^\eps_{1,1}(z,\zeta,\xi;\cdot)\|$ uniformly in $\Omega$. However, we lose a power of $|\Omega|$ in this process. This can be recovered by controlling the difference $\tilde{E}^\eps=M^\eps_{1,1}(\Omega)-M^\eps_{1,1}(-\Omega)$ further. We have
\begin{equation}\nonumber
    \partial_z\tilde{E}^\eps=\frac{i\Omega}{2\omega_0^2}\Delta_\tau(M^\eps_{1,1}\big(\Omega)-M^\eps_{1,1}(-\Omega)\big) +\frac{i}{\omega_0}\partial_r\cdot\partial_\tau\tilde{E}^\eps+\frac{\omega_0^2}{4\eta^2}[R(\eta\tau)-R(0)]\tilde{E}^\eps,\quad \tilde{E}(z=0)=0\,.
\end{equation}
Using Gr\"{o}nwall inequality and again from the uniform estimates on $\|\langle\xi\rangle^4\hat{M}^\eps_{1,1}\|$ from Lemma~\ref{lemma:M_eps_11_reg} further gives $\|\langle\xi\rangle^2\tilde{E}^\eps\|\le C|\Omega|$, which in turn gives $\|E^\eps\|\le C|\Omega|^2$. So we have
\begin{equation}\nonumber
\mathbb{E}|\upsilon^\eps(h,x;\Omega_1,\kappa)-\upsilon^\eps(h,x;\Omega_2,\kappa)|^{2n}\le C\min\{\eps^\frac13 +|\Omega|^{2n},|\Omega|^{2n}e^{c/\eta^2}\}\,.
\end{equation}
As before, it remains to use the first bound for $|\Omega|>\Omega_0=\eps^{\frac{1}{6n}}$ and the second bound for $|\Omega|\le\Omega_0$.
         \end{proof}
\end{lemma}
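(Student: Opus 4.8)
The plan is to adapt the propagation-distance argument of Lemma~\ref{lemma:tightness_z} to the frequency variable. Writing $\omega_j=\omega_0+\eps\eta\Omega_j$ and passing to the phase-compensated field $\psi^\eps$ of \eqref{eqn:psi_def}, the difference $\upsilon^\eps(h,x;\Omega_1,\kappa)-\upsilon^\eps(h,x;\Omega_2,\kappa)$ depends on frequency through exactly three factors: the free-propagation phase $e^{-iz\eta|\xi|^2/2\eps\omega}$, the damping $e^{-R(0)z\omega^2/8\eta^2}$, and $\psi^\eps$ itself. A telescoping decomposition therefore splits the difference into $J_1+J_2+J_3$ -- a difference of phase-compensated fields, of damping factors, and of free-propagation phases -- and by $|J_1+J_2+J_3|^{2n}\le C(n)\sum_i|J_i|^{2n}$ it suffices to estimate each $\mathbb{E}|J_i|^{2n}$ separately.

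The factors $J_2$ and $J_3$ are handled exactly as in Lemma~\ref{lemma:tightness_z}: extracting $1-e^{-R(0)\eps\eta\Omega(\omega_1+\omega_2)/8}$ and $1-e^{iz\eta^2\Omega|\xi|^2/2\omega_1\omega_2}$ respectively, the smallness of these exponents together with the moment bounds of Lemma~\ref{lemma:phi_estim} yields $\mathbb{E}|J_2|^{2n}\le C|\Omega|^{2n}$ and $\mathbb{E}|J_3|^{2n}\le C_\alpha|\Omega|^{2n(1-\alpha)}$ for any $\alpha\in(0,1)$. For the leading term $J_1$ I would insert the Duhamel expansion \eqref{eqn:psi_exp} and split each order into $\tilde J_{1,1}+\tilde J_{1,2}$: the first from the prefactor discrepancy, controlled by $|1-(\omega_2/\omega_1)^n|\le 2^n\eps\eta|\Omega|/\omega_1$; the second from the oscillatory-phase discrepancy, controlled by factoring out $1-e^{i\eta^2\Omega[\cdots]/2\omega_1\omega_2}$ together with the accumulated-phase bound $|g(\cdot)|\le 2^{n+2}\langle\xi\rangle^2\prod_j\langle k_j\rangle^2$. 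Forming $2n$ complex-symmetrized copies, Gaussian pairing and the combinatorial identity of Lemma~\ref{lemma:fact_sum} give the \emph{rough} bound $\mathbb{E}|J_1|^{2n}\le C(n,d)|\Omega|^{2n}e^{c/\eta^2}$.

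The decisive difficulty is that this $e^{c/\eta^2}$ factor is not absorbed by $|\Omega|^{2n}$ as $\eta\to0$. I resolve this with a dichotomy at the threshold $\Omega_0=\eps^{1/6n}$. For $|\Omega|\le\Omega_0$ the rough bound already suffices: writing $|\Omega|^{2n}=|\Omega|^{2n\alpha}|\Omega|^{2n(1-\alpha)}$ and using $|\Omega|^{2n(1-\alpha)}\le\Omega_0^{2n(1-\alpha)}=\eps^{(1-\alpha)/3}$, the scale separation $\eta=(\log|\log\eps|)^{-1}$ makes $\eps^{(1-\alpha)/3}e^{c/\eta^2}$ bounded, so that $|\Omega|^{2n}e^{c/\eta^2}\le C_\alpha|\Omega|^{2n\alpha}$. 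For $|\Omega|>\Omega_0$ I instead invoke the complex Gaussian approximation of Lemma~\ref{Lemma:hat_mu_pq_dec}, which collapses the full $2n$-th moment of the difference to $n!$ times the $n$th power of its second moment, up to an $O(\eps^{1/3})$ error. This reduces everything to controlling the second-moment difference $E^\eps(\Omega)=2M^\eps_{1,1}(0)-M^\eps_{1,1}(\Omega)-M^\eps_{1,1}(-\Omega)$.

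The technical heart, and the main obstacle I anticipate, is the sharp $|\Omega|$-dependence of $E^\eps(\Omega)$. Differencing the moment PDE \eqref{eqn:M_11_eps} shows that $E^\eps$ solves the same evolution equation with zero initial data and a source proportional to $\Omega\,\Delta_\tau\tilde E^\eps$, where $\tilde E^\eps=M^\eps_{1,1}(\Omega)-M^\eps_{1,1}(-\Omega)$. A single Gr\"onwall estimate using only the $\eps$-uniform bound $\|\langle\xi\rangle^2\hat M^\eps_{1,1}\|\le C$ of Lemma~\ref{lemma:M_eps_11_reg} loses a power, yielding merely $\|E^\eps\|\le C|\Omega|$. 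I recover the missing power by treating $\tilde E^\eps$ separately: it solves an analogous equation whose source is controlled by the $\langle\xi\rangle^4$-weighted bound of Lemma~\ref{lemma:M_eps_11_reg}, and a second Gr\"onwall step gives $\|\langle\xi\rangle^2\hat{\tilde E}^\eps\|\le C|\Omega|$, hence $\|E^\eps\|\le C|\Omega|^2$ and $\mathbb{E}|\upsilon^\eps(h,x;\Omega_1,\kappa)-\upsilon^\eps(h,x;\Omega_2,\kappa)|^{2n}\le C|\Omega|^{2n}$ in the Gaussian regime. Collecting the two regimes produces $\mathbb{E}|\upsilon^\eps(h,x;\Omega_1,\kappa)-\upsilon^\eps(h,x;\Omega_2,\kappa)|^{2n}\le C\min\{\eps^{1/3}+|\Omega|^{2n},|\Omega|^{2n}e^{c/\eta^2}\}$, and selecting the first bound for $|\Omega|>\Omega_0$ (where $\eps^{1/3}\le|\Omega|^{2n}$) and the second for $|\Omega|\le\Omega_0$ gives the claimed estimate with exponent $2n\alpha$.
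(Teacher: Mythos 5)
Your proposal is correct and follows essentially the same route as the paper's proof: the same $J_1+J_2+J_3$ decomposition via phase compensation, the same Duhamel splitting of $J_1$ into prefactor and phase discrepancies, the same rough bound $|\Omega|^{2n}e^{c/\eta^2}$, and the same dichotomy at $\Omega_0=\eps^{1/6n}$ with the Gaussian summation rule reducing the large-$|\Omega|$ regime to the two-step Gr\"onwall control of $E^\eps$ and $\tilde E^\eps$ yielding $\|E^\eps\|\le C|\Omega|^2$. No substantive differences to report.
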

\subsection{Tightness in direction}
\begin{lemma}\label{lemma:tightness_direc}
  For $z=z_0+\eps\eta h$, frequency $\omega=\omega_0+\eps\eta \Omega$, lateral location $\frac{r}{\eps}+\eta x$  fixed, and $\kappa_1,\kappa_2\in\mathbb{R}^d$, with $|\kappa_1-\kappa_2|< 1$
\begin{equation}\nonumber
\mathbb{E}|\upsilon^\eps(h,x;\Omega,\kappa_1)-\upsilon^\eps(h,x;\Omega,\kappa_2)|^{2n}\le c_\alpha(n,d)|\kappa_1-\kappa_2|^{2n\alpha}\,,
    \end{equation}
    where $\alpha$ is any real number between $(0,1)$.  
    \begin{proof}
The strategy of the proof here is similar and simpler than in the previous two cases. So we outline only the key differences. Let $e^\eps=u^\eps(z,x;\omega,\sk_1)-u^\eps(z,x;\omega,\sk_2)$ with $\sk_1-\sk_2=\eps\kappa$. This follows the It\^o-Schr\"{o}dinger equation with source $e^\eps(0)=u_0^\eps(x)(e^{i\sk_1\cdot x}-e^{i\sk_2\cdot x})$. This gives for the $2n$th moment
\begin{equation}\nonumber
    \mathbb{E}|u^\eps(z,x;\omega,\sk_1)-u^\eps(z,x;\omega,\sk_2)|^{2n}:=\mu^\eps_e\,,
\end{equation}
where
\begin{equation}\nonumber
    \begin{aligned}
        \partial_z\mu^\eps_e=\frac{i\eta}{2\eps\omega}\sum_{j=1}^n(\Delta_{x_j}-\Delta_{y_j})\mu^\eps_e+\mathcal{U}_{n,n}\mu^\eps_e\\
        \mu^\eps(0)=\prod_{j=1}^nu_0(\eps x_j)u^\ast_0(\eps y_j)(e^{i\sk_1\cdot x}-e^{i\sk_2\cdot x})\,.
    \end{aligned}
\end{equation}
We then have $\|\mu^\eps_e\|_\infty\le \|\hat{\mu}^\eps_e\|$. We always have the first type of bound
\begin{equation}\nonumber
    \|\hat{\mu}^\eps_e(z)\|\le \|\hat{\mu}^\eps_e(0)\|e^{\frac{R(0)\omega^2n^2z}{\eta^2}}\le C(n)|\kappa|^{2n}e^{\frac{R(0)\omega^2n^2z}{\eta^2}}\,,
\end{equation}
where $C$ is a constant that depends on the regularity of the initial condition $\|\partial_\xi\hat{u}_0(\xi)\|$. When $|\kappa|$ is much smaller than $\eta$, we can use this. For $|\kappa|$ large, we can use the Gaussian approximation, in which case the $2n$th moment is approximated by second moments, which are bounded in the TV sense independent of $\eps$.
\end{proof}
\end{lemma}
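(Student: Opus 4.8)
The key structural simplification, which makes this case lighter than Lemmas~\ref{lemma:tightness_z} and~\ref{lemma:tightness_omega}, is that the lateral wavenumber $\sk$ enters the It\^o--Schr\"odinger dynamics~\eqref{eqn:IS} \emph{only through the boundary data}: the differential operator and the noise coupling do not see $\sk$. Writing $\sk_j=\sk_0+\eps\kappa_j$ and $\kappa=\kappa_1-\kappa_2$, the difference $e^\eps:=u^\eps(z,\cdot\,;\omega,\sk_1)-u^\eps(z,\cdot\,;\omega,\sk_2)$ therefore solves the \emph{same} linear equation~\eqref{eqn:IS}, with the single modified source $e^\eps(0,x)=u_0(\eps x)\big(e^{i\sk_1\cdot x}-e^{i\sk_2\cdot x}\big)$. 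In particular there is no need for the Duhamel phase-compensation of Lemma~\ref{lem:phase_comp_pq}: all $2n$ factors sit at the same $(z,\omega)$, so $\mu^\eps_e:=\mathbb{E}\,|e^\eps|^{2n}$ is a coincident-point $(n,n)$ moment and solves the closed PDE~\eqref{eqn:mu_pq_PDE_fixed_z} with potential $\mathcal{U}_{n,n}$ and initial datum $\prod_{j=1}^n u_0(\eps x_j)u_0^\ast(\eps y_j)\big(e^{i\sk_1\cdot x_j}-e^{i\sk_2\cdot x_j}\big)\big(e^{-i\sk_1\cdot y_j}-e^{-i\sk_2\cdot y_j}\big)$.

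First I would record the crude energy bound. From $\|\mu^\eps_e\|_\infty\le\|\hat\mu^\eps_e\|$ and the regularity estimate of Lemma~\ref{lemma:phi_estim}, $\|\hat\mu^\eps_e(z)\|\le\|\hat\mu^\eps_e(0)\|\,e^{cn^2/\eta^2}$. The initial total-variation norm factorizes over the $2n$ tensor slots, and the change of variables $w=(\xi-\sk_1)/\eps$ turns each slot into $\int_{\mathbb{R}^d}|\hat u_0(w)-\hat u_0(w+\kappa)|\,\mathrm{d}w\le|\kappa|\,\|\nabla\hat u_0\|_{\sL^1}$, using $(\sk_1-\sk_2)/\eps=\kappa$; hence $\|\hat\mu^\eps_e(0)\|\le C(n)|\kappa|^{2n}$ and
\begin{equation}\nonumber
\mathbb{E}\,|e^\eps|^{2n}\le C(n)\,|\kappa|^{2n}\,e^{c/\eta^2}.
\end{equation}
This is useful only for very small $|\kappa|$, since the optical-depth factor $e^{c/\eta^2}$ diverges as $\eta\to0$; removing it is the one genuine obstacle.

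For the complementary range I would pass to the complex Gaussian regime. Since $e^\eps$ is a linear combination of the jointly (asymptotically) complex Gaussian fields $\upsilon^\eps(\cdot\,;\kappa_1),\upsilon^\eps(\cdot\,;\kappa_2)$, Theorem~\ref{thm:Gaussian_rule_phys} together with the vanishing pseudo-covariance $\mathbb{E}\upsilon_j\upsilon_l=0$ of Theorem~\ref{thm:finite_dim_mom} gives, exactly as in Lemma~\ref{lemma:tightness_omega}, $\mathbb{E}\,|e^\eps|^{2n}=n!\,(\mathbb{E}\,|e^\eps|^2)^n+O(\eps^{1/3})$. The crucial quantitative input is then the second-moment estimate $\mathbb{E}\,|e^\eps|^2\le C|\kappa|^2$, which I would obtain by expanding the square into four two-point correlations: $\mathbb{E}\,|e^\eps|^2=2M^\eps_{1,1}[0]-M^\eps_{1,1}[\kappa]-M^\eps_{1,1}[-\kappa]+O(\eps^\alpha)$, where $M^\eps_{1,1}[\cdot]$ is the solution of~\eqref{eqn:M_11_eps} (here with $h=\Omega=0$) whose initial phase is $e^{ir\cdot(\,\cdot\,)}$. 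Because $\kappa$ enters only this phase and not the coefficients of~\eqref{eqn:M_11_eps}, the symmetric second difference $E^\eps_\kappa:=2M^\eps_{1,1}[0]-M^\eps_{1,1}[\kappa]-M^\eps_{1,1}[-\kappa]$ solves the same homogeneous evolution with an initial datum of order $|\kappa|^2$ (a $1-\cos(r\cdot\kappa)$ factor against the Schwartz weight $|u_0(r)|^2$), and the total-variation contraction underlying Lemma~\ref{lemma:M_eps_11_reg} propagates this to $\|E^\eps_\kappa\|_\infty\le C|\kappa|^2$ uniformly in $\eps$. Hence $\mathbb{E}\,|e^\eps|^{2n}\le C|\kappa|^{2n}+O(\eps^{1/3})$, now free of any $\eta$-growth.

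Finally I would interpolate the two estimates as in the previous two lemmas, splitting at $\kappa_0=\eps^{1/(6n)}$. For $|\kappa|\le\kappa_0$ the crude bound gives $|\kappa|^{2n}e^{c/\eta^2}\le|\kappa|^{2n\alpha}\kappa_0^{2n(1-\alpha)}e^{c/\eta^2}=|\kappa|^{2n\alpha}\eps^{(1-\alpha)/3}e^{c/\eta^2}$, and the scale separation $\eta=(\log|\log\eps|)^{-1}$ forces $\eps^{(1-\alpha)/3}e^{c/\eta^2}\to0$; for $|\kappa|>\kappa_0$ the Gaussian bound gives $|\kappa|^{2n}+\eps^{1/3}\le|\kappa|^{2n\alpha}+\kappa_0^{2n\alpha}\le C|\kappa|^{2n\alpha}$, using $|\kappa|<1$ and $\eps^{1/3}\le\eps^{\alpha/3}=\kappa_0^{2n\alpha}$. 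Either way $\mathbb{E}\,|e^\eps|^{2n}\le c_\alpha(n,d)|\kappa|^{2n\alpha}$, as claimed. The only real difficulty is reconciling the exponential $e^{c/\eta^2}$ loss of the direct estimate with the polynomial target; this is exactly what the Gaussian collapse onto uniformly controlled second moments buys, and it is why the refined $O(|\kappa|^2)$ bound on $\mathbb{E}\,|e^\eps|^2$ — rather than mere uniform boundedness — is indispensable.
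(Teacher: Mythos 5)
Your proposal is correct and follows essentially the same route as the paper: the same crude total-variation bound $C(n)|\kappa|^{2n}e^{cn^2/\eta^2}$ obtained from the closed coincident-point $(n,n)$-moment PDE for small $|\kappa|$, the same reduction to second moments via the Gaussian summation rule for larger $|\kappa|$, and the same interpolation at $\kappa_0=\eps^{1/(6n)}$. You are in fact more explicit than the paper on the one point it glosses over — that the symmetric second difference $2M^\eps_{1,1}[0]-M^\eps_{1,1}[\kappa]-M^\eps_{1,1}[-\kappa]$ inherits an $O(|\kappa|^2)$ initial datum (the $1-\cos(r\cdot\kappa)$ factor) and hence $\mathbb{E}|e^\eps|^2\le C|\kappa|^2$ uniformly in $\eps$, which is the quantitative input the interpolation actually requires rather than mere $\eps$-uniform boundedness.
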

\begin{proof}[Proof of Theorem~\ref{thm:tightness}]
From a triangle inequality, we have
\begin{equation}\nonumber
    \begin{aligned}
       & \mathbb{E}|\upsilon^\eps(h_1,x_1,\Omega_1,\kappa_1)-\upsilon^\eps(h_2,x_2,\Omega,_2\kappa_2)|^{2n}\\
       &\le C_2(n,\alpha)(\mathbb{E}|\upsilon^\eps(h_1,x_1,\Omega_1,\kappa_1)-\upsilon^\eps(h_2,x_1,\Omega_1,\kappa_1)|^{2n\alpha}+\mathbb{E}|\upsilon^\eps(h_2,x_1,\Omega_1,\kappa_1)-\upsilon^\eps(h_2,x_2,\Omega_1,\kappa_1)|^{2n\alpha}\\
        &+\mathbb{E}|\upsilon^\eps(h_2,x_2,\Omega_1,\kappa_1)-\upsilon^\eps(h_2,x_2,\Omega_2,\kappa_1)|^{2n\alpha}+\mathbb{E}|\upsilon^\eps(h_2,x_2,\Omega_2,\kappa_1)-\upsilon^\eps(h_2,x_2,\Omega_2,\kappa_2)|^{2n\alpha})
    \end{aligned}
\end{equation}
From~\cite[Theorem 2.7]{bal2024complex}, we have that for fixed $(h,\omega,\kappa)$ and $|x_1-x_2|< 1$,
\begin{equation}\nonumber
 \mathbb{E}|\upsilon^\eps(h,x_1,\Omega,\kappa)-\upsilon^\eps(h,x_2,\Omega,\kappa)|^{2n}\le C_1(n,\alpha)|x_1-x_2|^{2n\alpha}\,,
\end{equation}
for any $\alpha\in(0,1)$ and a constant $C_1$ independent of $\eps$. This, along with Lemma~\ref{lemma:tightness_z},~\ref{lemma:tightness_omega} and~\ref{lemma:tightness_direc} shows that~\eqref{eqn:ups_tight} holds. Now, choosing $n$ large enough so that $n\ge 2d+2+2n\alpha_-$ for $\alpha_-\in(0,\frac12)$ shows that $\upsilon^\eps$ has a H\"{o}lder continuous version, with H\"{o}lder exponent $\alpha_-$, and the process $(h,x,\Omega,\kappa)\mapsto\upsilon^\eps(h,x,\Omega,\kappa)$ is tight on $C^{0,\alpha_-}(\mathbb{R}^{2d+2})$~\cite[Theorem 1.4.4]{kunita1997stochastic}.

The proof of Theorem~\ref{thm:tightness} is completed as a result of this along with Lemma~\ref{lemma:tightness_z},~\ref{lemma:tightness_omega} and~\ref{lemma:tightness_direc}.
\end{proof}

\section*{Acknowledgments} 
The authors acknowledge stimulating discussions on memory effects with Marc Guillon.
This work was funded in part by NSF grant DMS-230641.

\appendix
\section{Appendix}\label{sec:Appendix}
We will verify a modified version of Propositions 4.7 and 4.8 in~\cite{bal2024complex} that shows that the operator norm of the composition $L^{\eps,1}_{j,l}[U_\eta(U^\eps_{j',l'}-\mathbb{I})]$ is small when either $j=j'$ or $l=l'$ (but not both) while the operator norm of $L^{\eps,2}_{j,j'}$ itself is small as operators acting on $\mathcal{M}_B(\mathbb{R}^{(p+q)d})$, the Banach space of finite signed measures. 
\begin{proposition}[Modification of Proposition 4.7 in~\cite{bal2024complex}]\label{prop:L_1_bound}
Let $j=j'$ or $l=l'$, but not both. Then 
\begin{equation}\nonumber
  \sup_{0\le z'\le z}  \|\int_{z'}^zL^{\eps,1}_{j,l}(s)[U_\eta(U^\eps_{j',l'}-\mathbb{I})](s)\mathrm{d}s\|\le \frac{C_1\langle z\rangle}{\eta^4}\mathfrak{C}_1\big[\frac{C_2\eps}{\eta},\hat{\sfR},\hat{R},d\big]\,,
\end{equation}
where
\begin{equation}\nonumber
\mathfrak{C}_1[\delta,f,g,d]=C_3(\|f\|_1+\|f\|_\infty)(\|g\|_1+\|g\|_\infty)\delta\begin{cases}
        |\log\delta|^2,\quad d=1\\
        |\log\delta|,\quad d\ge 2\,.
    \end{cases}
\end{equation}
\begin{proof}
   Suppose $j=j'$, $l\neq l'$ and $\Omega_{j,l}=\Omega_l-\Omega_j$. For $\psi\in\mathcal{M}_B(\mathbb{R}^{(p+q)d})$,
   \begin{equation}\nonumber
   \begin{aligned}
             &\|\int_{z'}^zL^{\eps,1}_{j,l}(s)[U_\eta(U^\eps_{j',l'}-\mathbb{I})](s)\mathrm{d}s\psi\|\\
             &=\frac{\omega_j^2\omega'_l\omega'_{l'}}{(4\eta^2)^2}\|\int_{\mathbb{R}^{2d}}\int_{z'}^z\int_{0}^{s_1}U_\eta(s_1)U^\eps_{j,l'}(s_2,\xi_j-k_1-k_2,\zeta_l-k_1,\zeta_{l'}-k_2)\psi(\xi_j-k_1-k_2,\zeta_l-k_1,\zeta_{l'}-k_2)\\
             &e^{\frac{i\eta s_1}{2\eps}\big[-\frac{|k_1|^2\eps\eta\Omega_{j,l}}{\omega_j\omega'_l}+2k_1\cdot\big(\frac{\xi_j}{\omega_j}-\frac{\zeta_l}{\omega'_l}\big)\big]}e^{\frac{is_2\eta}{2\eps}\big[-\frac{|k_2|^2\eps\eta\Omega_{j,l'}}{\omega_j\omega'_{l'}}+2k_2\cdot\big(\frac{\xi_j-k_1}{\omega_j}-\frac{\zeta_{l'}}{\omega'_{l'}}\big)\big]}\frac{\hat{R}(k_1)\hat{R}(k_2)\mathrm{d}k_1\mathrm{d}k_2\mathrm{d}s_2\mathrm{d}s_1}{(2\pi)^{2d}}\|\,.
   \end{aligned}
   \end{equation}
   From the change of variables $\xi_j-k_1-k_2\to \xi_j, \zeta_l-k_1\to \zeta_l, \zeta_{l'}-k_2\to\zeta_{l'}$, 
      \begin{equation}\nonumber
   \begin{aligned}
             &\|\int_{z'}^zL^{\eps,1}_{j,l}(s)[U_\eta(U^\eps_{j',l'}-\mathbb{I})](s)\mathrm{d}s\psi\|\\
            =&\frac{\omega_j^2\omega'_l\omega'_{l'}}{(4\eta^2)^2}\|\int_{\mathbb{R}^{2d}}\int_{z'}^z\int_{0}^{s_1}U_\eta(s_1)U^\eps_{j,l'}(s_2,v)\psi(v)e^{\frac{i\eta s_1}{\eps}k_1\cdot\big(\frac{\xi_j}{\omega_j}-\frac{\zeta_l}{\omega'_l}+\frac{k_2}{\omega_j}\big)}e^{\frac{i\eta s_2}{\eps}k_2\cdot\big(\frac{\xi_j}{\omega_j}-\frac{\zeta_{l'}}{\omega'_{l'}}\big)}\\
             &e^{\frac{i\eta^2}{2}\big[\frac{s_1|k_1|^2\Omega_{j,l}}{\omega_j\omega'_l}+\frac{s_2|k_2|^2\Omega_{j,l'}}{\omega_j\omega'_{l'}}\big]}\frac{\hat{R}(k_1)\hat{R}(k_2)\mathrm{d}k_1\mathrm{d}k_2\mathrm{d}s_2\mathrm{d}s_1}{(2\pi)^{2d}}\|\\
             =& \frac{\omega_j^2\omega'_l\omega'_{l'}}{(4\eta^2)^2}\|\int_{\mathbb{R}^{2d}}\frac{\hat{R}(k_1)\hat{R}(k_2)\mathrm{d}k_1\mathrm{d}k_2\mathrm{d}s_1\mathrm{d}s_2}{(2\pi)^{2d}}\\
             &\Big(\int_{0}^{z'}\int_{z'}^{z}+\int_{z'}^z\int_{s_2}^{z}\Big)U_\eta(s_1)U^\eps_{j,l'}(s_2,v)\psi(v)e^{\frac{i\eta s_1}{\eps}k_1\cdot\big(\frac{\xi_j}{\omega_j}-\frac{\zeta_l}{\omega'_l}+\frac{k_2}{\omega_j}\big)}e^{\frac{i\eta s_2}{\eps}k_2\cdot\big(\frac{\xi_j}{\omega_j}-\frac{\zeta_{l'}}{\omega'_{l'}}\big)}e^{\frac{i\eta^2}{2}\big[\frac{s_1|k_1|^2\Omega_{j,l}}{\omega_j\omega'_l}+\frac{s_2|k_2|^2\Omega_{j,l'}}{\omega_j\omega'_{l'}}\big]}\|\\
             \le& \frac{\omega_j^2\omega'_l\omega'_{l'}}{(4\eta^2)^2}\Big(\big\|\int_{\mathbb{R}^{2d}}\frac{\hat{R}(k_1)\hat{R}(k_2)\mathrm{d}k_1\mathrm{d}k_2}{(2\pi)^{2d}}\int_0^{z'}|U^\eps_{j,l'}(s_2)\psi|\mathrm{d}s_2\big|\int_{z'}^zU_\eta(s_1)e^{\frac{i\eta s_1}{\eps\omega_j}k_1\cdot\big(\xi_j-\frac{\zeta_l\omega_j}{\omega'_l}+\frac{\eta\eps\Omega_{j,l}k_1}{2\omega'_l}+k_2\big)}\mathrm{d}s_1\big|\big\| \\
             &+\big\|\int_{\mathbb{R}^{2d}}\frac{\hat{R}(k_1)\hat{R}(k_2)\mathrm{d}k_1\mathrm{d}k_2}{(2\pi)^{2d}}\int_{z'}^z|U^\eps_{j,l'}(s_2)\psi|\mathrm{d}s_2\big|\int_{s_2}^zU_\eta(s_1)e^{\frac{i\eta s_1}{\eps\omega_j}k_1\cdot\big(\xi_j-\frac{\zeta_l\omega_j}{\omega'_l}+\frac{\eta\eps\Omega_{j,l}k_1}{2\omega'_l}+k_2\big)}\mathrm{d}s_1\big|\big\|\Big)\\
             \le& \frac{\omega_j^2\omega'_l\omega'_{l'}}{(4\eta^2)^2}\sup_{w\in\mathbb{R}^d}\Big(\int_{\mathbb{R}^{2d}}\frac{\hat{R}(k_1)\hat{R}(k_2)\mathrm{d}k_1\mathrm{d}k_2}{(2\pi)^{2d}}\int_0^{z'}\|U^\eps_{j,l'}(s_2)\psi\|\mathrm{d}s_2\big|\int_{z'}^zU_\eta(s_1)e^{\frac{i\eta s_1}{\eps\omega_j}k_1\cdot(k_2+w)}\mathrm{d}s_1\big| \\
             &+\sup_{w\in\mathbb{R}^d}\int_{\mathbb{R}^{2d}}\frac{\hat{R}(k_1)\hat{R}(k_2)\mathrm{d}k_1\mathrm{d}k_2}{(2\pi)^{2d}}\int_{z'}^z\|U^\eps_{j,l'}(s_2)\psi\|\mathrm{d}s_2\big|\int_{s_2}^zU_\eta(s_1)e^{\frac{i\eta s_1}{\eps\omega_j}k_1\cdot(k_2+w)}\mathrm{d}s_1\big|\Big)\,.
   \end{aligned}
   \end{equation}
   The rest of the proof can be completed as in the proof of~\cite[Proposition 4.7]{bal2024complex}.
\end{proof}
\end{proposition}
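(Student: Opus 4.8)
The plan is to treat the representative case $j=j'$, $l\neq l'$; the case $l=l'$, $j\neq j'$ is identical after exchanging the roles of the unbarred and barred variables. I would begin exactly as in the computation above: expand $U^\eps_{j,l'}-\mathbb{I}$ by Duhamel to produce a second copy of $L^{\eps,1}_{j,l'}$ carrying its own momentum $k_2$ and inner time $s_2\in[0,s_1]$, change variables $\xi_j-k_1-k_2\to\xi_j$, $\zeta_l-k_1\to\zeta_l$, $\zeta_{l'}-k_2\to\zeta_{l'}$ to free the argument of $\psi$, and reorder the time-ordered pair $\int_{z'}^z\!\int_0^{s_1}$ into $\int_0^{z'}\!\int_{z'}^z+\int_{z'}^z\!\int_{s_2}^z$. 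This reduces the operator norm to the supremum over $w\in\Rm^d$ of an expression whose only $s_1$-dependence is the scalar oscillatory integral $\int U_\eta(s_1)\,e^{\frac{i\eta s_1}{\eps\omega_j}k_1\cdot(k_2+w)}\,\mathrm{d}s_1$, as in the final display above.

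The core then rests on two elementary one-dimensional estimates followed by a two-dimensional oscillatory $k$-integral. First, with $\lambda=C_0/\eta^2$ the decay rate of $U_\eta(s)=e^{-\lambda s}$ and $\mu=\frac{\eta}{\eps\omega_j}\,k_1\cdot(k_2+w)$, I would use $\big|\int_a^b e^{-\lambda s+i\mu s}\,\mathrm{d}s\big|\le 2e^{-\lambda a}/\sqrt{\lambda^2+\mu^2}$. Second, $\|U^\eps_{j,l'}(s)\|\le e^{cs/\eta^2}$ with $c=\frac{\omega_j\omega'_{l'}}{4(2\pi)^d}\|\hat R\|_1$. The one place the multi-frequency setting departs from \cite{bal2024complex} is precisely that $\lambda$ and $c/\eta^2$ agree to leading order: since $R(0)=\|\hat R\|_1/(2\pi)^d$ and every frequency satisfies $\omega_j=\omega_0+O(\eps\eta)$, one has $|C_0-c|=O(\eps\eta)$, so the product $e^{cs/\eta^2}e^{-\lambda s}\le e^{O(\eps/\eta)\langle z\rangle}$ stays bounded and the exponential growth of $U^\eps_{j,l'}$ is absorbed by the decay of $U_\eta$. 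After the $s$-integrals one is left with the prefactor $\frac{\omega_j^2\omega'_l\omega'_{l'}}{(4\eta^2)^2}$, a factor $\langle z\rangle$ from the reordered time integral, and the kernel $1/\sqrt{\lambda^2+\mu^2}$ to be integrated against $\hat R(k_1)\hat R(k_2)$.

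The main obstacle is this last oscillatory integral, since the oscillation gives no gain on the degenerate set $\{k_1\perp(k_2+w)\}$. I would freeze $k_2$, set $a=k_2+w$, and slice $k_1=t\,\widehat a+y$ with $y\perp a$, so that $k_1\cdot a=t|a|$ and, bounding $\hat R\le\hat\sfR$ radially, the transverse $y$-integral is controlled uniformly by the integrability of $\hat\sfR$. The remaining integral $\int \hat\sfR/\sqrt{\lambda^2+\beta^2|a|^2t^2}\,\mathrm{d}t$, with $\beta=\eta/(\eps\omega_j)$, has transition scale $t_\ast=\lambda/(\beta|a|)\sim \eps/(\eta^3|a|)$ and evaluates to $\frac{1}{\beta|a|}\big(O(1)+|\log t_\ast|\big)\sim \frac{\omega_j}{|a|}\,\tfrac{\eps}{\eta}\,|\log(\eps/\eta)|$; integrating over $k_2$ then requires $\sup_w\int \hat R(k_2)/|k_2+w|\,\mathrm{d}k_2$, which is finite in dimension $d\ge 2$ by local integrability of $1/|\cdot|$ but only borderline in $d=1$, where the cutoff at $t_\ast$ supplies the second logarithm and produces the dichotomy $|\log\delta|^2$ versus $|\log\delta|$ recorded in $\mathfrak{C}_1$. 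Collecting the prefactor $\eta^{-4}$, the factor $\langle z\rangle$, and this $\delta\,|\log\delta|$-type bound with $\delta=C_2\eps/\eta$ yields the claimed estimate; the fact that $t_\ast\to0$ and that $\eta=(\log|\log\eps|)^{-1}\to0$ slowly enough for the whole right-hand side to vanish as $\eps\to0$ is then immediate.
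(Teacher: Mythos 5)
Your proposal follows the paper's proof essentially verbatim through the Duhamel expansion, the change of variables freeing the argument of $\psi$, the reordering of the time-ordered integrals, and the reduction to $\sup_w$ of the oscillatory $s_1$-integral against $\hat R(k_1)\hat R(k_2)$. The remaining steps, which the paper delegates to \cite[Proposition 4.7]{bal2024complex}, you fill in correctly: the cancellation $e^{cs/\eta^2}e^{-C_0 s/\eta^2}\le e^{O(\eps/\eta)\langle z\rangle}$ (valid here since all frequencies are $\omega_0+O(\eps\eta)$), the bound $|\int e^{-\lambda s+i\mu s}\,\mathrm{d}s|\lesssim e^{-\lambda a}(\lambda^2+\mu^2)^{-1/2}$, and the slicing of $k_1$ along $k_2+w$ with the radial majorant $\hat\sfR$, which correctly reproduces the $\delta|\log\delta|$ versus $\delta|\log\delta|^2$ dichotomy of $\mathfrak{C}_1$.
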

\begin{proposition}[Modification of Proposition 4.8 in~\cite{bal2024complex}]\label{prop:L_2_bound}
    \begin{equation}\nonumber
      \sup_{0\le z'\le z}  \|\int_{z'}^zL^{\eps,2}_{j,j'}(s)\mathrm{d}s\|\le \frac{c\langle z\rangle}{\eta^2}\mathfrak{C_2}\big[\frac{\omega_j\omega_{j'}}{\Bar{\omega}_{j,j'}}\frac{\eps}{\eta},\hat{R},d\big],\quad 1\le j<j'\le p\,,
    \end{equation}
    where 
    \begin{equation}\nonumber
        \mathfrak{C}_2[\delta,f,d]=\begin{cases}
            2\pi\|f\|_\infty\sqrt{\delta},\quad d=1\\
            C(\|\langle\xi\rangle^{d-2}f(\xi)\|_\infty\delta|\log\delta|+\|f\|_1\delta),\quad d\ge 2\,.
        \end{cases}
    \end{equation}
    Similar bounds hold for $L^{\eps,2}_{l,l'}, p+1\le l<l'\le p+q$, with $\omega_j,\omega_{j'}$ replaced by $\omega'_l, \omega'_{l'}$.
    \begin{proof}
       For $\psi\in\mathcal{M}_B(\mathbb{R}^{(p+q)d})$,
        \begin{equation}\nonumber
            \|\int_{z'}^zL^{\eps,2}_{j,j'}(s)\psi(s)\mathrm{d}s\|=\frac{\omega_j\omega_{j'}}{4\eta^2}\|\int_{z'}^z\int_{\mathbb{R}^d}\psi(\xi_j-k,\xi_{j'}+k)e^{\frac{i\eta s}{\eps}\big[-\frac{|k|^2\Bar{\omega}_{j,j'}}{\omega_j\omega_{j'}}+k\cdot\big(\frac{\xi_j}{\omega_j}-\frac{\xi_{j'}}{\omega_{j'}}\big)\big]}\frac{\hat{R}(k)\mathrm{d}k\mathrm{d}s}{(2\pi)^d}\|\,. 
        \end{equation}
        From the change of variables $\xi_j-k\to \xi_j, \xi_{j'}+k\to \xi_{j'}$,
                \begin{equation}\nonumber
                \begin{aligned}
       \|\int_{z'}^zL^{\eps,2}_{j,j'}(s)\psi(s)\mathrm{d}s\|&=\frac{\omega_j\omega_{j'}}{4\eta^2}\|\int_{z'}^z\int_{\mathbb{R}^d}\psi(v)e^{\frac{i\eta s}{\eps}\big[\frac{|k|^2\Bar{\omega}_{j,j'}}{\omega_j\omega_{j'}}+k\cdot\big(\frac{\xi_j}{\omega_j}-\frac{\xi_{j'}}{\omega_{j'}}\big)\big]}\frac{\hat{R}(k)\mathrm{d}k\mathrm{d}s}{(2\pi)^d}\|\\        
       &\le \frac{\omega_j\omega_{j'}}{4\eta^2}\|\psi\|\sup_{w\in\mathbb{R}^d}\int_{\mathbb{R}^d}\frac{\hat{R}(k)\mathrm{d}k}{(2\pi)^d}\Big|\int_{z'}^ze^{\frac{is\eta \Bar{\omega}_{j,j'} }{\eps\omega_j\omega_{j'}}k\cdot(k+w)}\mathrm{d}s\Big|\,.
                \end{aligned}
                 \end{equation}
                 The rest of the proof follows as in~\cite[Proposition 4.8]{bal2024complex}.
    \end{proof}
\end{proposition}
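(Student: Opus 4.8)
The plan is to reduce the operator-norm bound to a scalar oscillatory-integral estimate in the lateral frequency $k$ and the propagation variable $s$, following the single-frequency computation of~\cite[Proposition 4.8]{bal2024complex} and tracking the only new ingredient, namely the frequency-dependent oscillation rate carried by $\bar\omega_{j,j'}/(\omega_j\omega_{j'})$. First I would write $\int_{z'}^z L^{\eps,2}_{j,j'}(s)\,\mathrm{d}s$ acting on a measure $\psi\in\mathcal{M}_B(\mathbb{R}^{(p+q)d})$, insert the explicit phase $\frac{g(\xi_j,k)}{\omega_j}+\frac{g(\xi_{j'},-k)}{\omega_{j'}}=-\frac{2|k|^2\bar\omega_{j,j'}}{\omega_j\omega_{j'}}+2k\cdot\big(\frac{\xi_j}{\omega_j}-\frac{\xi_{j'}}{\omega_{j'}}\big)$, and perform the measure-preserving change of variables $\xi_j-k\mapsto\xi_j$, $\xi_{j'}+k\mapsto\xi_{j'}$ (Jacobian one) to move the shift off $\psi$. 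After this substitution the phase depends on the shifted dual variables only through a single vector combination $w$ and is quadratic in $k$.

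Next I would estimate the total-variation norm by pulling $\|\psi\|$ out of the integral and taking the supremum over the frozen vector $w$, which reduces the claim to bounding
\[
\sup_{w\in\mathbb{R}^d}\int_{\mathbb{R}^d}\frac{\hat R(k)}{(2\pi)^d}\Big|\int_{z'}^z e^{\frac{is\eta\bar\omega_{j,j'}}{\eps\omega_j\omega_{j'}}\,k\cdot(k+w)}\,\mathrm{d}s\Big|\,\mathrm{d}k
\]
up to the prefactor $\frac{\omega_j\omega_{j'}}{4\eta^2}$. The inner time integral is elementary and is bounded by $\min\{z-z',\,2\delta/|k\cdot(k+w)|\}$ with $\delta=\frac{\omega_j\omega_{j'}}{\bar\omega_{j,j'}}\frac{\eps}{\eta}$, which is precisely the effective small parameter appearing in $\mathfrak{C}_2$. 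Everything then comes down to integrating $\hat R(k)$ against this minimum, uniformly in $w$.

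The hard part will be controlling the small-denominator region. Completing the square, $k\cdot(k+w)=|k+w/2|^2-|w|^2/4$, so the stationary/degenerate set is the sphere of radius $|w|/2$ centered at $-w/2$, and the region where the inner integral fails to decay is a shell of thickness $\sim\delta^{1/2}$ about this sphere. In $d=1$ the sphere degenerates to two points and the contribution is $O(\|\hat R\|_\infty\sqrt\delta)$, giving the first branch of $\mathfrak{C}_2$. For $d\ge2$ the shell has surface measure growing like $|w|^{d-1}$, and the decay hypothesis $\langle k\rangle^{d-2}\hat R\in\sL^\infty$ (imposed on the medium for $d\ge3$, and trivial for $d=2$) is exactly what is needed to absorb this growth, producing the $\|\langle\xi\rangle^{d-2}\hat R\|_\infty\,\delta|\log\delta|$ term; the complementary far region where $|k\cdot(k+w)|\gtrsim 1$ contributes $\|\hat R\|_1\delta$. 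Multiplying by $\frac{\omega_j\omega_{j'}}{4\eta^2}$ and using $\langle z\rangle$ to absorb the time length yields the stated bound. Since these shell-measure estimates are identical to those in~\cite[Proposition 4.8]{bal2024complex} once the coefficient $\bar\omega_{j,j'}/(\omega_j\omega_{j'})$ is carried through, I would simply invoke that computation for the routine steps; finally, replacing $\omega_j,\omega_{j'}$ by $\omega'_l,\omega'_{l'}$ throughout is purely a relabeling, so the companion bound for $L^{\eps,2}_{l,l'}$ follows verbatim.
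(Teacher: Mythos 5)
Your proposal follows the paper's proof essentially verbatim: the same measure-preserving change of variables $\xi_j-k\mapsto\xi_j$, $\xi_{j'}+k\mapsto\xi_{j'}$, the same reduction to $\sup_{w}\int\hat R(k)\big|\int_{z'}^z e^{\frac{is\eta\bar\omega_{j,j'}}{\eps\omega_j\omega_{j'}}k\cdot(k+w)}\mathrm{d}s\big|\mathrm{d}k$ with effective small parameter $\delta=\frac{\omega_j\omega_{j'}}{\bar\omega_{j,j'}}\frac{\eps}{\eta}$, and the same appeal to the single-frequency oscillatory-integral estimate of the cited reference for the remaining shell-measure computation. The extra detail you supply on the degenerate sphere $|k+w/2|=|w|/2$ and the role of the $\langle k\rangle^{d-2}\hat R\in\sL^\infty$ hypothesis is consistent with what that reference does, so the argument is correct and takes the same route.
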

\bibliographystyle{siam}
\bibliography{Reference}

\begin{thebibliography}{10}

\bibitem{andrews2001laser}
{\sc L.~C. Andrews, R.~L. Phillips, and C.~Y. Hopen}, {\em Laser {B}eam
  {S}cintillation with {A}pplications}, vol.~99, SPIE press, 2001.

\bibitem{bailly1996parabolic}
{\sc F.~Bailly, J.-F. Clouet, and J.-P. Fouque}, {\em Parabolic and {G}aussian
  white noise approximation for wave propagation in random media}, SIAM Journal
  on Applied Mathematics, 56 (1996), pp.~1445--1470.

\bibitem{bal2011asymptotics}
{\sc G.~Bal, T.~Komorowski, and L.~Ryzhik}, {\em {Asymptotics of the solutions
  of the random Schr{\"o}dinger equation}}, Archive for Rational Mechanics and
  Analysis, 200 (2011), pp.~613--664.

\bibitem{bal2024complex}
{\sc G.~Bal and A.~Nair}, {\em Complex {G}aussianity of long-distance random
  wave processes}, Archive for Rational Mechanics and Analysis, 249 (2025),
  pp.~1--54.

\bibitem{bal2025longPCB}
\leavevmode\vrule height 2pt depth -1.6pt width 23pt, {\em Long distance
  propagation of light in random media with partially coherent sources}, Waves
  in Random and Complex Media,  (2025), pp.~1--33.

\bibitem{bal2025long}
\leavevmode\vrule height 2pt depth -1.6pt width 23pt, {\em Long distance
  propagation of wave beams in paraxial regime}, Multiscale Modeling \&
  Simulation, 23 (2025), pp.~1209--1235.

\bibitem{billingsley2017probability}
{\sc P.~Billingsley}, {\em Probability and {M}easure}, John Wiley \& Sons,
  2017.

\bibitem{carminati2021principles}
{\sc R.~Carminati and J.~C. Schotland}, {\em Principles of {S}cattering and
  {T}ransport of {L}ight}, Cambridge University Press, 2021.

\bibitem{dawson1984random}
{\sc D.~A. Dawson and G.~C. Papanicolaou}, {\em A random wave process}, Applied
  Mathematics and Optimization, 12 (1984), pp.~97--114.

\bibitem{fouque2007wave}
{\sc J.-P. Fouque, J.~Garnier, G.~Papanicolaou, and K.~Solna}, {\em Wave
  {P}ropagation and {T}ime {R}eversal in {R}andomly {L}ayered {M}edia},
  vol.~56, Springer Science \& Business Media, 2007.

\bibitem{garnier2018multiscale}
{\sc J.~Garnier}, {\em Multiscale analysis of wave propagation in random
  media}, in Proceedings of the International Congress of Mathematicians: Rio
  de Janeiro 2018, World Scientific, 2018, pp.~2877--2902.

\bibitem{garnier2009coupled}
{\sc J.~Garnier and K.~S{\o}lna}, {\em Coupled paraxial wave equations in
  random media in the white-noise regime}, The Annals of Applied Probability,
  (2009), pp.~318--346.

\bibitem{garnier2014scintillation}
\leavevmode\vrule height 2pt depth -1.6pt width 23pt, {\em Scintillation in the
  white-noise paraxial regime}, Communications in Partial Differential
  Equations, 39 (2014), pp.~626--650.

\bibitem{garnier2016fourth}
\leavevmode\vrule height 2pt depth -1.6pt width 23pt, {\em Fourth-moment
  analysis for wave propagation in the white-noise paraxial regime}, Archive
  for Rational Mechanics and Analysis, 220 (2016), pp.~37--81.

\bibitem{garnier2023speckle}
\leavevmode\vrule height 2pt depth -1.6pt width 23pt, {\em Speckle memory
  effect in the frequency domain and stability in time-reversal experiments},
  Multiscale Modeling \& Simulation, 21 (2023), pp.~80--118.

\bibitem{goodman1976some}
{\sc J.~W. Goodman}, {\em Some fundamental properties of speckle}, JOSA, 66
  (1976), pp.~1145--1150.

\bibitem{gu2021gaussian}
{\sc Y.~Gu and T.~Komorowski}, {\em {Gaussian fluctuations from random
  Schr{\"o}dinger equation}}, Communications in Partial Differential Equations,
  46 (2021), pp.~201--232.

\bibitem{ishimaru1978wave}
{\sc A.~Ishimaru}, {\em Wave {P}ropagation and {S}cattering in {R}andom
  {M}edia}, vol.~2, Academic press New York, 1978.

\bibitem{kunita1997stochastic}
{\sc H.~Kunita}, {\em Stochastic Flows and Stochastic Differential Equations},
  vol.~24, Cambridge University Press, 1997.

\bibitem{mertz2015field}
{\sc J.~Mertz, H.~Paudel, and T.~G. Bifano}, {\em Field of view advantage of
  conjugate adaptive optics in microscopy applications}, Applied optics, 54
  (2015), pp.~3498--3506.

\bibitem{osnabrugge2017generalized}
{\sc G.~Osnabrugge, R.~Horstmeyer, I.~N. Papadopoulos, B.~Judkewitz, and I.~M.
  Vellekoop}, {\em Generalized optical memory effect}, Optica, 4 (2017),
  pp.~886--892.

\bibitem{reed1962moment}
{\sc I.~Reed}, {\em On a moment theorem for complex gaussian processes}, IRE
  Transactions on Information Theory, 8 (1962), pp.~194--195.

\bibitem{sheng1990scattering}
{\sc P.~Sheng}, {\em Scattering and {L}ocalization of {C}lassical {W}aves in
  {R}andom {M}edia}, vol.~8, World Scientific, 1990.

\bibitem{valley1976application}
{\sc G.~C. Valley and D.~L. Knepp}, {\em Application of joint gaussian
  statistics to interplanetary scintillation}, Journal of Geophysical Research,
  81 (1976), pp.~4723--4730.

\bibitem{yakushkin1978moments}
{\sc I.~G. Yakushkin}, {\em Moments of field propagating in randomly
  inhomogeneous medium in the limit of saturated fluctuations}, Radiophysics
  and Quantum Electronics, 21 (1978), pp.~835--840.

\bibitem{zhu2020chromato}
{\sc L.~Zhu, J.~Boutet~de Monvel, P.~Berto, S.~Brasselet, S.~Gigan, and
  M.~Guillon}, {\em Chromato-axial memory effect through a forward-scattering
  slab}, Optica, 7 (2020), pp.~338--345.

\end{thebibliography}

\end{document}